\definecolor{ffffff}{rgb}{1.,1.,1.}
\definecolor{cqcqcq}{rgb}{0.75,0.75,0.75}
\newcommand{\e}{\varepsilon}
\renewcommand{\k}{\kappa}
\newcommand{\g}{\gamma}
\renewcommand{\t}{\tau}
\renewcommand{\phi}{\varphi}
\newcommand{\G}{\Gamma}
\newcommand{\Om}{\Omega}
\newcommand{\cF}{\mathcal{F}}
\newcommand{\R}{\mathbb{R}}
\DeclareMathOperator{\dist}{dist}
\DeclareMathOperator{\interior}{int}
\DeclareMathOperator{\reach}{reach}
\DeclareMathOperator{\inr}{inr}
\DeclareMathOperator{\argmin}{arg\,min}
\DeclareMathOperator{\argmax}{arg\,max}
\newcommand{\de}{\partial}
\theoremstyle{plain}
\newtheorem{thm}{Theorem}[section]
\newtheorem*{thm*}{Theorem}
\newtheorem{lem}[thm]{Lemma}
\newtheorem{prop}[thm]{Proposition}
\newtheorem{cor}[thm]{Corollary}
\theoremstyle{definition}
\newtheorem{defin}[thm]{Definition}
\theoremstyle{remark}
\newtheorem{rem}[thm]{Remark}
\numberwithin{equation}{section}
\title[The isoperimetric problem in $2$d domains...]{The isoperimetric problem\\ in $2$d domains without necks}
\author[G.~P.~Leonardi]{Gian Paolo Leonardi}
\address[Gian Paolo Leonardi]{Dipartimento di Matematica, Universit\`a di Trento, via Sommarive 14, IT--38123 Povo-Trento}
\email{gianpaolo.leonardi@unitn.it}
\author[G.~Saracco]{Giorgio Saracco}
\address[Giorgio Saracco]{Dipartimento di Matematica, Universit\`a di Trento, via Sommarive 14, IT--38123 Povo-Trento}
\email{giorgio.saracco@unitn.it}%
\thanks{G.P.L.~and G.S.~are members of INdAM and have been partially supported by the INdAM--GNAMPA Project 2020 ``Problemi isoperimetrici con anisotropie'' (n.~prot.~U-UFMBAZ-2020-000798 15-04-2020).}
\subjclass[2020]{Primary: 49Q10. Secondary: 35J93, 49Q20}
\keywords{perimeter minimizer, prescribed mean curvature, isoperimetric profile, convexity}
\begin{document}

\begin{abstract}
We give a complete characterization of all isoperimetric sets contained in a domain of the Euclidean plane, that is bounded by a Jordan curve and satisfies a no-neck property. Further, we prove that the isoperimetric profile of such domain is convex above the volume of the largest ball contained in it, and that its square is globally convex.
\end{abstract}

 \hspace{-3cm}
 {
 \begin{minipage}[t]{0.6\linewidth}
 \begin{scriptsize}
 \vspace{-3cm}
 This is a pre-print of an article published in \emph{Calc.~Var.~Partial Differential Equations}. The final authenticated version is available online at: \href{https://doi.org/10.1007/s00526-021-02153-9}{https://doi.org/10.1007/s00526-021-02153-9}
 \end{scriptsize}
\end{minipage} 
}

\maketitle

\section{Introduction}

Given a bounded, open set $\Omega\subset \R^{n}$, $n\ge 2$, we consider the isoperimetric problem among Borel subsets of $\Omega$, that is, the minimization of the perimeter $P(E)$ of a Borel set $E\subset \Omega$ subject to a volume constraint $|E|=V$, where by perimeter we mean the distributional one in the sense of Caccioppoli--De Giorgi, and where $|E|$ denotes the Lebesgue measure of $E$ and $V\in [0,|\Omega|]$. Moreover, we are interested in the properties of the total isoperimetric profile
\begin{equation}\label{eq:isop_prof}
\mathcal{J}(V):=\inf\left\{\,P(E)\,:\, |E|=V,\, E\subset \Omega \,\right\}\,,
\end{equation}
as a function defined on $[0,|\Omega|]$. If $R_\Om$ denotes the inradius of $\Om$ (i.e., the radius of the largest ball contained in $\Om$) and if $\omega_{n}$ represents the Lebesgue measure of the unit ball in $\R^{n}$, then the classical isoperimetric inequality in $\R^{n}$ implies that the unique minimizers for volumes $0<V\le \omega_{n}R_{\Omega}^{n}$ are balls, up to null sets. Thus, finding and characterizing minimizers, as well as computing $\mathcal{J}(V)$, is a trivial problem whenever $V\le \omega_n R^n_\Om$, while it becomes a challenging problem for larger $V$. 

By well-known compactness and semicontinuity properties of the perimeter, proving the existence of minimizers is a quite straightforward task. An alternative approach to solve \eqref{eq:isop_prof} is through the minimization of the unconstrained problem
\begin{equation}\label{eq:pmc}
\mathcal{F}_\kappa[E] := P(E)-\k|E|\,,
\end{equation}
where $\k>0$ is a fixed constant and $E\subset \Omega$. The functional $\mathcal{F}_\k$ is usually referred to as the \emph{prescribed mean curvature functional}, since any nontrivial minimizer $E_\kappa$ is such that $\de E_\kappa \cap \Omega$ is analytic up to a closed singular set of Hausdorff dimension at most $n-8$, and has constant mean curvature equal to $(n-1)^{-1}\kappa$. Whenever a set $E_\k$ minimizes $\mathcal{F}_\k$, it is as well a minimizer of~\eqref{eq:isop_prof} for the prescribed volume $V=|E_\k|$. Constructing minimizers of~\eqref{eq:isop_prof} via the unconstrained problem~\eqref{eq:pmc} is a viable strategy only when the constant $\k$ is chosen greater than or equal to the Cheeger constant of $\Om$\begin{equation}\label{eq:cheeger}
h_\Om:= \inf \left\{\, \frac{P(E)}{|E|}\,:\, E\subset \Om\, \right\}\,.
\end{equation}
Indeed, when $\k<h_\Om$ the functional~\eqref{eq:pmc} has the empty set as the unique minimizer, therefore we gain no useful information in this case. In fact, this program has been carried out in~\cite{ACC05} in the $n$-dimensional case for convex, $C^{1,1}$ regular sets $\Om$, and in our recent paper~\cite{LS20} in the $2$d case for a special class of simply-connected domains that includes all (open, bounded) convex sets. Any minimizer of~\eqref{eq:cheeger} is referred to as a \emph{Cheeger set} of $\Om$. In the settings of~\cite{ACC05, LS20}, among all Cheeger sets of $\Om$, the ones with least and greatest volumes (the so-called \textit{minimal} and \textit{maximal} Cheeger sets) are unique, and we shall denote them, respectively, by $E^m_{h_\Om}$ and $E^M_{h_\Om}$ (in the setting of~\cite{ACC05} it is a consequence of~\cite{AC09}, while we refer to~\cite[Theorem~2.3]{LS20} for the other setting). Then, it is shown in \cite{ACC05, LS20} that for all volumes $V$ greater than or equal to the volume of the minimal Cheeger set $|E^m_{h_\Om}|$, one can find a curvature $\k$ and a minimizer $E_\k$ of~\eqref{eq:pmc} such that $|E_\k|=V$, and thus $P(E_\k)=\mathcal{J}(V)$.

Unless $\Om$ is itself a ball, one always has the strict inequality $\omega_n R^n_\Om<|E^m_{h_\Om}|$, and one can easily exhibit sets for which $|E^m_{h_\Om}|-\omega_n R^n_\Om$ is as big as one wishes, see \cref{sec:examples} for some examples in dimension $n=2$. Thus, there is a possibly very wide range of volumes for which we cannot tackle directly the isoperimetric problem through the unconstrained minimization of $\mathcal{F}_\k$ for suitable values of $\k$. In particular, this approach fails because the functional $\mathcal{F}_\k$ is uniquely minimized by the empty set for values $\k< h_\Om$. However, by suitably shrinking the class of competitors we can altogether avoid this problem. Namely, we consider the minimization problem
\begin{equation}\label{eq:pmc_mod}
\inf\{\,\mathcal{F}_\kappa[E]:=P(E)-\kappa|E|\,: E\in \mathcal{C}(\k)\,\}\,,
\end{equation}
where the class of competitors $\mathcal{C}(\k)$ is set as follows:
\begin{equation}\label{eq:classCk}
 \hspace{-6pt}\mathcal{C}(\k) \hspace{-1.5pt}= \hspace{-1.5pt}
\begin{cases}
E\subset \Om\quad \text{Borel}\,, \quad &\text{if $\k> h_\Om$}\,, \\
E\subset \Om\quad \text{Borel s.t.}\ |E|\ge (n-1)^{n}\omega_n \k^{-n}\,, &\text{if $\frac{n-1}{R_\Om}\le \k \le h_\Om$}\,.
\end{cases}
\end{equation}
In principle, one could also consider a mean curvature $\frac{\k}{n-1}$ smaller than ${R_\Om}^{-1}$, as long as $\omega_n(n-1)^{n} \kappa^{-n}\le |\Om|$, which ensures that $\mathcal{C}(\k) \neq \emptyset$. Nevertheless, the choice $R_\Om^{-1}\le \frac{\k}{n-1}$ is key to get full information on the mean curvature of the minimizers, see \cref{rem:k_piu_piccoli}.

We shall thus consider the minimization problem~\eqref{eq:pmc_mod}, when $\Om$ is a $2$d set whose boundary is a Jordan curve with zero $2$-dimensional Lebesgue measure, and such that $\Om$ has no necks of radius $r$ for all $r\le h_\Om^{-1}$ (see \cref{def:no_necks}), which corresponds to the setting of our previous paper~\cite{LS20}.  In particular, for $\k \ge h_\Om$ we recover the results\footnote{In this former paper we considered $C(h_\Om)=\{E\subset \Om\}$, but we explicitly ruled out from our results the empty set, considering as minimizers of~\eqref{eq:pmc} for $\k=h_\Om$ only those of~\eqref{eq:cheeger}. By changing the class of competitors and considering~\eqref{eq:pmc_mod}, we now avoid treating $\k=h_\Om$ as a special case.} of~\cite{LS20}. In the case $R^{-1}_\Om \le \k< h_\Om$, the addition of a lower bound on the volume as an extra constraint prevents the empty set, and in general any set of small volume, from being a minimizer. One of the core results of this paper is \cref{thm:main_shape_min}, which shows that \emph{all} minimizers of~\eqref{eq:pmc_mod}, for any fixed $\k\ge R_{\Om}^{-1}$, are geometrically characterized as ``suitable unions of balls of radius $\k^{-1}$ contained in $\Om$''. This characterization extends~\cite[Theorem~2.3]{LS20}, where it was proved only for minimizers of $\mathcal{F}_\k$ with the least and the greatest volumes, and under the restrictive assumption $\k \ge h_\Om$.

This geometric characterization allows us to find, for any given volume $V\ge \pi R^2_\Om$, a curvature $\k \ge R_\Om^{-1}$ and a minimizer $E_\k$ of~\eqref{eq:pmc_mod} such that $|E_\k|=V$. Hence, we can characterize all minimizers of~\eqref{eq:isop_prof} and provide an extension of~\cite[Theorem~3.32]{SZ97}, which was proved only for convex sets in dimension $n=2$. For the sake of completeness, we recall that this latter result was not completely new at the time: the dual problem (maximize volume under a perimeter constraint among subsets of a convex, $2$d set) had been first discussed in~\cite[Variant~III]{Bes52} under the assumption of convexity of minimizers, which was later shown to be redundant in~\cite{SS78} (for $P \le P(\Om)$). Additionally, the characterization~\cite[Theorem~3.32]{SZ97} was known for triangles since the papers of Steiner~\cite{Ste42a, Ste42b}, see also~\cite{Dem75}, and for circumscribed polygons~\cite{Lin77}.

Our approach of building isoperimetric sets as minimizers of $\mathcal{F}_\k$ for a suitable $\k$ also allows us to prove some convexity properties of both the isoperimetric profile $\mathcal{J}$, which we show to be convex for $V\ge \pi R^2_\Om$, and its square $\mathcal{J}^2$, which we show to be globally convex. Besides the trivial fact that the total isoperimetric profile is concave up to $V\le \pi R^2_\Om$, we are not aware of any result in the literature concerning its convexity properties until the last year, when few results were first proved. Indeed, in~\cite[Section~3]{FLSS19} it was proved that a suitable relaxation of~\eqref{eq:isop_prof} is convex, while in our previous paper~\cite[Section~6]{LS20}, we proved that, for the same class of domains now under consideration, there exists a threshold volume $\overline{V}$ (which is the volume of the minimal Cheeger set of $\Om$, $|E^m_{h_\Om}|$), above which the isoperimetric profile is convex. Essentially, here we prove that one can lower this threshold all the way down to $\pi R_\Om^2$. This finally means that $\mathcal{J}$ is convex for all volumes $V\ge \pi R_\Om^2$. In some sense, the presence of the boundary of $\Om$ as an obstacle forces the isoperimetric profile to switch from concave to convex, in the range of volumes where the interaction between isoperimetric sets and the obstacle $\de \Om$ becomes effective.

Finally, it is worth noting that our proof of convexity does not rely on knowing the shape of minimizers, but rather on knowing that for all volumes above a certain threshold $\overline{V}$ one can find isoperimetric sets as minimizers of $\mathcal{F}_\k$ for a suitable $\k$. Thanks to the results in~\cite{ACC05}, we infer convexity of $\mathcal{J}$ above $\overline{V}=|E^m_{h_\Om}|$ for $n$-dimensional, bounded, $C^{1,1}$ regular, convex sets $\Om$. If one could extend the arguments of~\cite{ACC05}, which exploit the maximum principle and Korevaar's comparison principle~\cite{Kor83a, Kor83b}, to $(n-1)R^{-1}_\Om\le \k<h_\Om$, then one would immediately get the convexity of  $\mathcal{J}$ for all $V\ge \omega_n R^n_\Om$ and the global convexity of its $n(n-1)^{-1}$ power by following our same proofs.

It is interesting to compare these convexity properties, to those of the \emph{relative} isoperimetric profile
\[
\mathcal{J}_{\text{rel}}(V):=\inf\left\{\,P(E; \Omega)\,:\, |E|=V,\, E\subset \Omega \,\right\}\,,
\]
that have been well studied: the first result in this direction were obtained for bounded, $C^{2,\alpha}$ regular, convex sets $\Omega$, and it was shown that the profile is concave~\cite{SZ99} and so it is its $n(n-1)^{-1}$ power~\cite{Kuw03}; these have been later extended to bounded, convex bodies without any further regularity assumption on their boundaries~\cite[Section~6]{Mil09}, see also~\cite{RV15}; then to arbitrary, unbounded, convex bodies~\cite{LRV18}.

The paper is structured as follows. In \cref{sec:main_results} we give the definition of the class of sets $\Omega$ we will consider along with the main results. In \cref{sec:shape_min} we prove several properties of minimizers of~\eqref{eq:pmc_mod} and then prove the geometric characterization of its minimizers. \cref{sec:isop_prof} deals with the isoperimetric profile. Finally, \cref{sec:examples} completes the paper with few explicit examples.

\section{Main results}\label{sec:main_results}

In this section we state and comment the main results of the paper. We start by the following definition, first introduced in~\cite{LNS17}.

\begin{defin}\label{def:no_necks}
A set $\Om$ has \emph{no necks of radius $r$}, with $r\in(0, R_\Om]$ if the following condition holds. If $B_r(x_0)$ and $B_r(x_1)$ are two balls of radius $r$ contained in $\Om$, then there exists a continuous curve $\g\colon [0,1]\to \Om$ such that 
\[
\g(0)=x_0, \qquad \g(1)=x_1, \qquad B_r(\g(t)) \subset \Om,\quad \forall t\in [0,1].
\]
\end{defin}

Before stating our main results, we need to recollect the following proposition which introduces some of the notation that we will need later on. For the sake of completeness, we recall that a Jordan curve is the image of a continuous and injective map $\Phi:\mathbb{S}^1\to \mathbb{R}^2$ and a Jordan domain is the domain bounded by such a curve, which is well defined thanks to the Jordan--Schoenflies Theorem. We also recall that for $r\le R_\Om$, the set $\Om^r$ is the (closed) inner parallel set of $\Om$ at distance $r$, i.e.,
\[
\Om^r := \{\,x\in \Omega\,:\, \dist(x; \de \Omega) \ge r \,\}\,.
\]
The reach of a closed set $A$, which was introduced in the seminal paper~\cite{FedererCM} (see also the recent book~\cite{RZ19book}) is defined as
\[
\reach(A) := \sup \{\, r : \forall x \in A\oplus B_r\,, \, x \text{ has a unique projection onto } A\,\}\,,
\]
where $\oplus$ denotes the \emph{Minkowski sum}, and we use the notation $B_r = B_r(0)$.

In the next proposition we collect various results from~\cite{LS20} (in particular, see Proposition~2.1, Remark~4.2, Lemma~5.3, and Remark~5.4).
\begin{prop}\label{prop:struttura_diff}
Let $\Om$ be a Jordan domain with no necks of radius $r$. The following properties hold:
\begin{itemize}
\item[(a)] if $\Om^r$ is nonempty but has empty interior, then either it consists of a single point or there exists an embedding $\g\colon[0,1]\to \R^2$ of class $C^{1,1}$, with curvature bounded by $r^{-1}$, such that $\g([0,1]) = \Om^r$;
\item[(b)] if $\interior(\Om^r) \neq \emptyset$, then there exist two (possibly empty) families $\G^1_r$ and $\G^2_r$ of embedded curves contained in $\Om^r$ with the following properties. For each $i=1,2$ and each $\g\in \G^i_r$, 
\begin{itemize}
\item[(i)] $\g\colon[0,1]\to \Om^r$ is nonconstant and of class $C^{1,1}$, with curvature bounded by $r^{-1}$;
\item[(ii)] if $i=1$, then $\overline{\interior(\Om^r)}\cap \g = \{\g(0)\}$;
\item[(iii)] if $i=2$, then $\overline{\interior(\Om^r)}\cap \g = \{\g(0), \g(1)\}$;
\item[(iv)] $\G^1_r$ is a finite collection of curves;
\item[(v)] we have
\[
\Om^r \setminus \overline{\interior(\Om^r)} = \bigcup_{\g\in \G^1_r} \g \left((0,1]\right) \cup \bigcup_{\g \in \G^2_r} \g \left((0,1)\right).
\]
\end{itemize}
Moreover, for every $\theta: \G^1_r\to [0,1]$ the compact set
\[
C_\theta = \overline{\interior(\Om^r)} \cup \bigcup_{\g \in \G^2_r} \g([0,1]) \cup \bigcup_{\g \in \G^1_r} \g([0,\theta(\g)])\,,
\]
is simply connected and such that $\reach(C_\theta)\ge r$. By $C_0$ and $C_1$ we shall denote the sets obtained with the choices $\theta(\g) \equiv 0,1$ for every $\g\in \G^{1}_{r}$.

\noindent
Finally, if $\Om$ has no necks of radius $r$ for all $r\in [\bar r, \bar r+\e]$, and for given $\bar r>0$ and $\e>0$, then $\G^2_r =\emptyset$ for all such $r$.
\end{itemize}
\end{prop}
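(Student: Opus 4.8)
Since the statement collects facts proved in \cite{LS20}, the plan is to reconstruct each item from the structure of the inner parallel set $\Om^r=\Om\ominus B_r$, the natural tool being the distance function $d(x):=\dist(x;\de\Om)$. It is $1$-Lipschitz, satisfies $\Om^r=\{d\ge r\}$, and — crucially — $d(\cdot)^2$ is semiconcave, since $d(x)^2=|x|^2-\sup_{y\in\de\Om}(2\langle x,y\rangle-|y|^2)$ is $|x|^2$ minus a convex function of $x$. Two topological facts come first. A point $x$ lies in $\Om^r$ if and only if $\overline{B_r(x)}\subset\overline\Om$, so \cref{def:no_necks} says exactly that $\Om^r$ is path-connected. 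Moreover $\R^2\setminus\Om^r=(\R^2\setminus\overline\Om)\cup(\de\Om\oplus B_r)$ is connected, hence the compact connected set $\Om^r$ has connected complement and is therefore simply connected; in particular $\interior(\Om^r)$, when nonempty, is simply connected, and $\Om^r\oplus B_r\subset\overline\Om$.

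For the regularity in (a) and (b) I would use that superlevel sets of semiconcave functions are sets of positive reach away from critical points: on $\{d=r\}$ one has $|\nabla d|=1$ wherever $d$ is differentiable, so the ``fat part'' $\overline{\interior(\Om^r)}$ has reach at least $r$, and Federer's structure theorem for planar sets of positive reach gives that its boundary is either a point or a $C^{1,1}$ Jordan curve with curvature bounded by $r^{-1}$; this is (a) in the degenerate case $\interior(\Om^r)=\emptyset$. The lower-dimensional remainder $\Om^r\setminus\overline{\interior(\Om^r)}$ is a union of embedded arcs on which $d\equiv r$ (note $\{d=r\}$ has empty interior, being contained in the cut locus); along such an arc the ball $\overline{B_r(\cdot)}$ is wedged against $\de\Om$ on both flanks, and the resulting rolling-ball constraint forces the arc to be $C^{1,1}$ with curvature $\le r^{-1}$. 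Sorting these arcs by whether one or both endpoints touch $\overline{\interior(\Om^r)}$ produces the families $\G^1_r$ (item (ii)) and $\G^2_r$ (item (iii)); finiteness of $\G^1_r$ and the decomposition (v) then follow from a compactness/accumulation argument, since an infinite family of one-endpoint arcs would accumulate onto the compact fat part incompatibly with the $C^{1,1}$ bound.

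It remains to treat the sets $C_\theta$ and the last sentence. Each $C_\theta$ is obtained from $\overline{\interior(\Om^r)}$ by appending arcs meeting it only at endpoints; because $\Om^r$ is simply connected, the components of $\interior(\Om^r)$ together with the bridging $\G^2_r$ arcs form a tree-like configuration, so adding all of the $\G^2_r$ arcs and any prefixes $\g([0,\theta(\g)])$ of the $\G^1_r$ whiskers creates no loops, and $C_\theta$ is simply connected. That $\reach(C_\theta)\ge r$ is preserved follows since attaching, tangentially and along its endpoints, a $C^{1,1}$ curve of curvature $\le r^{-1}$ to a set of reach $\ge r$ produces no point with non-unique nearest point within distance $r$. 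Finally, if $\Om$ has no necks of radius $r$ for all $r\in[\bar r,\bar r+\e]$ and some $\g\in\G^2_r$ existed, its two endpoints would lie in distinct components $U_1,U_2$ of $\interior(\Om^r)$ (otherwise the loop formed with a path in the fat part would, by simple connectedness, bound a $2$-dimensional region of $\Om^r$, forcing $\g$ into $\overline{\interior(\Om^r)}$); picking $x_i\in U_i$ with $d(x_i)>r$ and using the no-neck property at a radius $\rho>r$ slightly larger than $r$ yields a path joining them inside $\Om^\rho\subset\{d>r\}\subset\interior(\Om^r)$, hence inside a single component — a contradiction, so $\G^2_r=\emptyset$.

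The step I expect to be most delicate is the combined regularity/reach analysis of the whiskers: pinning down the sharp constant $r^{-1}$ on the one-dimensional pieces, proving finiteness of $\G^1_r$, and making rigorous — through a careful topological argument in the simply connected set $\Om^r$ — the claim that reattaching \emph{arbitrary} sub-arcs of the $\G^1_r$ whiskers together with all of the $\G^2_r$ whiskers simultaneously preserves simple connectedness and the bound $\reach\ge r$.
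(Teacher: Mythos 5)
The paper does not actually prove this proposition: it is a compendium of results imported verbatim from~\cite{LS20} (see Proposition~2.1, Remark~4.2, Lemma~5.3 and Remark~5.4 there), and the only clause the authors comment on is the final one, whose proof is declared to be ``exactly the same'' as the one outlined in~\cite[Remark~4.2]{LS20}. Your last paragraph reconstructs precisely that argument, and it is sound: every point of $\interior(\Om^r)$ satisfies $d>r$ (since $d$ decreases linearly along the segment toward a nearest boundary point, no point with $d=r$ can be interior to $\{d\ge r\}$), so if a handle $\g\in\G^2_r$ joined two distinct components $U_1,U_2$ of $\interior(\Om^r)$ one could apply the no-neck hypothesis at a slightly larger radius $\rho\in(r,\bar r+\e]$ to points $x_i\in U_i\cap\Om^{\rho}$ and obtain a path inside $\Om^{\rho}\subset\interior(\Om^r)$, a contradiction. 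This matches the paper's intent.

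For the remaining items, however, your sketch has genuine gaps exactly at the two points you flag as delicate, and the intermediate claims you propose to bridge them do not hold as stated. First, semiconcavity of $d^2$ can at best yield \emph{locally} positive reach and gives no control on the sharp constant $r$; moreover, the assertion that $\overline{\interior(\Om^r)}$ alone has $\reach\ge r$ is precisely where the no-neck hypothesis and the planar topology of the Jordan domain must enter (a priori two components of the fat part could lie at mutual distance less than $2r$ while being joined only through a long handle, which would destroy the reach bound; excluding this, and then propagating $\reach\ge r$ to $C_\theta$ for \emph{arbitrary} prefixes of the whiskers, is the actual content of~\cite[Lemma~5.3]{LS20} and is not a corollary of semiconcavity). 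Second, your finiteness argument for $\G^1_r$ fails as written: infinitely many embedded arcs of curvature at most $r^{-1}$ can perfectly well accumulate onto a compact set provided their lengths tend to zero, so the $C^{1,1}$ bound by itself forbids nothing; a different mechanism (the structure of the tips $\g(1)$, where $B_r(\g(1))$ is wedged against $\de\Om$ on more than a half-circle) is needed. Two smaller slips: $\{d=r\}$ is not contained in the cut locus (take $\Om$ a disk), although it does have empty interior for the reason recalled above; and $\interior(\Om^r)$ need not be connected, so simple connectedness should be asserted component-wise before assembling the tree-like structure of $C_\theta$.
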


To have a better picture of the situation described in \cref{prop:struttura_diff}, we refer to \cref{fig:g1_g2}. Loosely speaking, curves in $\G^1_r$ correspond to the presence of ``tendrils'' of width $r$ in $\Omega$, while curves in $\G^2_r$ correspond to ``handles'' of width $r$ between different connected components of $\interior(\Om^r)$. For the sake of completeness, we notice that the last part of the previous statement, corresponding to~\cite[Remark~4.2]{LS20}, is written here in a local form, while it was originally stated under the global assumption of no necks for all $r\in [0,\bar r]$. The proof of this part is exactly the same as the one outlined in that remark.

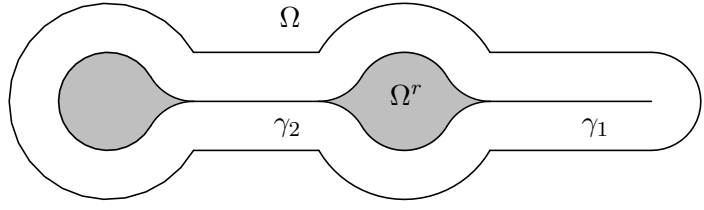
\begin{figure}
\begin{tikzpicture}[line cap=round,line join=round,>=triangle 45,x=1.0cm,y=1.0cm, scale=.65]
\begin{scope}[yscale=1, xscale=-1]
\clip(-6.2,-2.2) rectangle (8.2,2.2);
\fill[line width=0.pt,color=cqcqcq,fill=cqcqcq,fill opacity=1.0] (-1.7320508075688772,0.) -- (-0.866025403784438,0.5) -- (-0.8597787687708186,-0.5106666904850296) -- cycle;
\fill[line width=0.pt,color=cqcqcq,fill=cqcqcq,fill opacity=1.0] (1.7320508075688772,0.) -- (0.8660254037844382,0.5) -- (0.8597787687708185,-0.5106666904850297) -- cycle;
\fill[line width=0.pt,color=cqcqcq,fill=cqcqcq,fill opacity=1.0] (4.267949192431123,0.) -- (5.133974596215562,0.5) -- (5.140221231229182,-0.5106666904850299) -- cycle;
\draw [line width=0.6pt,color=cqcqcq,fill=cqcqcq,fill opacity=1.0] (6.,0.) circle (1.cm);
\draw [line width=0.6pt,color=ffffff,fill=ffffff,fill opacity=1.0] (4.267949192431123,1.) circle (1.cm);
\draw [line width=0.6pt,color=cqcqcq,fill=cqcqcq,fill opacity=1.0] (0.,0.) circle (1.cm);
\draw [line width=0.6pt,color=ffffff,fill=ffffff,fill opacity=1.0] (1.732050807568877,1.) circle (1.cm);
\draw [line width=0.6pt,color=ffffff,fill=ffffff,fill opacity=1.0] (-1.7320508075688767,1.) circle (1.cm);
\draw [line width=0.6pt,color=ffffff,fill=ffffff,fill opacity=1.0] (1.732050807568877,-1.) circle (1.cm);
\draw [line width=0.6pt,color=ffffff,fill=ffffff,fill opacity=1.0] (-1.7320508075688767,-1.) circle (1.cm);
\draw [line width=0.6pt,color=ffffff,fill=ffffff,fill opacity=1.0] (4.267949192431123,-1.) circle (1.cm);
\draw [line width=0.6pt] (-5.,1.)-- (-1.7320508075688772,1.);
\draw [line width=0.6pt] (1.7320508075688774,1.)-- (4.267949192431123,1.);
\draw [line width=0.6pt] (-5.,-1.)-- (-1.7320508075688772,-1.);
\draw [line width=0.6pt] (1.7320508075688774,-1.)-- (4.267949192431123,-1.);
\draw [shift={(-5.,0.)},line width=0.6pt]  plot[domain=1.5707963267948966:4.71238898038469,variable=\t]({1.*1.*cos(\t r)+0.*1.*sin(\t r)},{0.*1.*cos(\t r)+1.*1.*sin(\t r)});
\draw [shift={(0.,0.)},line width=0.6pt]  plot[domain=0.5235987755982988:2.6179938779914944,variable=\t]({1.*2.*cos(\t r)+0.*2.*sin(\t r)},{0.*2.*cos(\t r)+1.*2.*sin(\t r)});
\draw [shift={(0.,0.)},line width=0.6pt]  plot[domain=3.665191429188092:5.759586531581287,variable=\t]({1.*2.*cos(\t r)+0.*2.*sin(\t r)},{0.*2.*cos(\t r)+1.*2.*sin(\t r)});
\draw [shift={(-1.7320508075688772,1.)},line width=0.6pt]  plot[domain=4.71238898038469:5.759586531581288,variable=\t]({1.*1.*cos(\t r)+0.*1.*sin(\t r)},{0.*1.*cos(\t r)+1.*1.*sin(\t r)});
\draw [line width=0.6pt] (-5.,0.)-- (-1.7320508075688772,0.);
\draw [shift={(-1.732050807568877,-1.)},line width=0.6pt]  plot[domain=4.71238898038469:5.759586531581288,variable=\t]({1.*1.*cos(\t r)+0.*1.*sin(\t r)},{0.*1.*cos(\t r)+-1.*1.*sin(\t r)});
\draw [shift={(1.7320508075688774,1.)},line width=0.6pt]  plot[domain=4.71238898038469:5.759586531581288,variable=\t]({-1.*1.*cos(\t r)+0.*1.*sin(\t r)},{0.*1.*cos(\t r)+1.*1.*sin(\t r)});
\draw [shift={(1.7320508075688767,-1.)},line width=0.6pt]  plot[domain=4.71238898038469:5.759586531581288,variable=\t]({-1.*1.*cos(\t r)+0.*1.*sin(\t r)},{0.*1.*cos(\t r)+-1.*1.*sin(\t r)});
\draw [shift={(4.267949192431123,1.)},line width=0.6pt]  plot[domain=4.71238898038469:5.759586531581288,variable=\t]({1.*1.*cos(\t r)+0.*1.*sin(\t r)},{0.*1.*cos(\t r)+1.*1.*sin(\t r)});
\draw [shift={(4.267949192431123,-1.)},line width=0.6pt]  plot[domain=4.71238898038469:5.759586531581288,variable=\t]({1.*1.*cos(\t r)+0.*1.*sin(\t r)},{0.*1.*cos(\t r)+-1.*1.*sin(\t r)});
\draw [line width=0.6pt] (1.7320508075688772,0.)-- (4.267949192431123,0.);
\draw [shift={(6.,0.)},line width=0.6pt]  plot[domain=-2.617993877991494:2.617993877991494,variable=\t]({1.*1.*cos(\t r)+0.*1.*sin(\t r)},{0.*1.*cos(\t r)+1.*1.*sin(\t r)});
\draw [shift={(0.,0.)},line width=0.6pt]  plot[domain=3.6651914291880923:5.759586531581287,variable=\t]({1.*1.*cos(\t r)+0.*1.*sin(\t r)},{0.*1.*cos(\t r)+1.*1.*sin(\t r)});
\draw [shift={(0.,0.)},line width=0.6pt]  plot[domain=0.5235987755982994:2.617993877991494,variable=\t]({1.*1.*cos(\t r)+0.*1.*sin(\t r)},{0.*1.*cos(\t r)+1.*1.*sin(\t r)});
\draw [shift={(6.,0.)},line width=0.6pt]  plot[domain=-2.617993877991494:2.617993877991494,variable=\t]({1.*2.*cos(\t r)+0.*2.*sin(\t r)},{0.*2.*cos(\t r)+1.*2.*sin(\t r)});
\draw (2.85396543022974,-.15) node[anchor=north west] {$\gamma_2$};
\draw (-3.3815835625861905,-.15) node[anchor=north west] {$\gamma_1$};
\draw (2.7340510265217413,2.1279954111917614) node[anchor=north west] {$\Omega$};
\draw (0.5,0.5391295620607859) node[anchor=north west] {$\Omega^r$};
\end{scope}
\end{tikzpicture}
\caption{A set with a curve in $\G^1_r$ and one in $\G^2_r$.}\label{fig:g1_g2}
\end{figure}

\medskip

The next result provides a precise, geometric characterization of all minimizers of $\mathcal{F}_\k$. 

\begin{thm}\label{thm:main_shape_min}
Let $\Om$ be a Jordan domain with $|\de \Om|=0$ and let $\k \ge R_\Om^{-1}$ be fixed. Assume $\Om$ has no necks of radius $r=\k^{-1}$. Let $E_{\k}$ be a minimizer of problem~\eqref{eq:pmc_mod}, that is, of the functional ${\mathcal F}_{\k}$ restricted to the class $\mathcal{C}(\kappa)$. Then, with reference to the notation introduced in \cref{prop:struttura_diff}, the following properties hold:
\begin{itemize}
\item[(i)] if $r< R_{\Om}$ and $\G^1_{r}\neq \emptyset$, then there exists $\theta:\G^1_{r}\to [0,1]$ such that 
\[
E_{\k} = C_{\theta}\oplus B_{r}\,,
\]
and in particular 
\[
E_{\k}^{m} = C_{0}\oplus B_{r},\quad E_{\k}^{M} = C_{1}\oplus B_{r} = \Om^{r}\oplus B_{r}
\]
are, respectively, the unique minimal and maximal minimizers of ${\mathcal F}_{\k}$;

\item[(ii)] if $r< R_{\Om}$ and $\G^1_{r}= \emptyset$, then $E_{\k}$ is the unique minimizer of ${\mathcal F}_{\k}$, given by
\[
E_{\k} = \Om^{r} \oplus B_{r};
\]

\item[(iii)] if $r=R_\Om$, then $\Om^{r}$ is a closed curve of class $C^{1,1}$ (possibly reduced to a point) and there exists a connected subset $K\subset \Om^{r}$ such that 
\[
E_{\k} = K \oplus B_{r}\,.
\]
Moreover any ball of radius $r$ centered on $\Om^{r}$ is a minimal minimizer, while $E_{\k}^{M} = \Om^{r}\oplus B_{r}$ is the unique maximal minimizer.
\end{itemize}
\end{thm}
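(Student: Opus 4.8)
The plan is to reduce \cref{thm:main_shape_min} to a geometric statement by showing that every minimizer $E_\k$ of~\eqref{eq:pmc_mod} is a union of closed balls of radius $r=\k^{-1}$, i.e.\ $E_\k=K\oplus B_r$ for a compact set $K\subset\Om^r$ with $\reach(K)\geq r$, and then to read off the admissible center sets $K$ from the structure of $\Om^r$ in \cref{prop:struttura_diff} via the Steiner (tube) formula. First I would settle existence (the class $\mathcal{C}(\k)$ is nonempty since $\overline B_r\subset\Om$, perimeter is lower semicontinuous, and the volume constraint in~\eqref{eq:classCk} is closed under $L^1$-convergence) and dispose of the degenerate case: if that constraint is active at $E_\k$, then $E_\k$ minimizes the perimeter among sets of measure $\pi r^2$ in $\Om$, hence $E_\k=B_r(x)$ with $x\in\Om^r$ by the isoperimetric inequality — one of the asserted minimizers, occurring only when $r=R_\Om$. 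Otherwise $|E_\k|>\pi r^2$ and $E_\k$ is an unconstrained local minimizer of $\mathcal{F}_\k$, so by the regularity theory already invoked in~\cite{LS20}, $\de E_\k\cap\Om$ is a finite union of circular arcs of radius $r$ with center of curvature on the $E_\k$-side.

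The core step, and the main obstacle, is the passage to $E_\k=K\oplus B_r$. The Euler--Lagrange equation yields the interior ball condition of radius $r$ at every free boundary point; what requires work is (a) that the osculating disc $B_r(y)$ there actually lies \emph{inside} $E_\k$, and (b) that an interior $r$-disc exists at every point of $\de E_\k\cap\de\Om$ as well. For (a) I would show that two ``oppositely nested'' radius-$r$ sub-arcs of $\de E_\k$ cannot coexist — two circles of the same radius $r$ are either equal or externally tangent, and the latter contradicts the $C^{1,1}$ regularity of the free boundary — while $\de\Om$ cannot enter a disc $B_r(y)\subset\Om$; for (b) I would show that a point of $\de E_\k\cap\de\Om$ lacking an interior $r$-disc would force the nearby free arcs to pinch off the offending region, so that an appropriate surgery would strictly decrease $\mathcal{F}_\k$. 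Granted the interior ball condition at \emph{every} point of $\de E_\k$, a one-line argument along the inward normal ray shows that each point of $E_\k$ lies in some $B_r(y)\subset E_\k$; hence $E_\k=K\oplus B_r$ with $K:=\{x:\overline B_r(x)\subset E_\k\}$, which is compact, satisfies $K\subset\Om^r$ (since $\overline B_r(x)\subset E_\k\subset\Om$ gives $\dist(x,\de\Om)\geq r$), and has $\reach(K)\geq r$ (a standard property of erosions by a ball of radius $r$). I expect this step to be organized into several preliminary lemmas, as the introduction suggests.

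It then remains to identify $K$. Since $\reach(K)\geq r$, the Steiner formula gives $|K\oplus B_\rho|=|K|+\rho\,\mathcal{M}(K)+\pi\rho^2\chi(K)$ for $0\le\rho\le r$ (with $\mathcal{M}$ the relevant Minkowski content and $\chi$ the Euler characteristic), so $P(K\oplus B_r)=\mathcal{M}(K)+2\pi r\,\chi(K)$ and
\[
\mathcal{F}_\k[K\oplus B_r]=\pi r\,\chi(K)-\frac{|K|}{r}.
\]
Minimality now pins down $K$. First, $K$ may be assumed to have no holes: any bounded complementary component $H$ of $K$ lies in $\Om^r$ (a point of a segment from $H$ to $\de\Om$ of length $<r$ would otherwise sit on $\de H\subset K\subset\Om^r$, which is absurd); if $|H|>\pi r^2$ then replacing $K$ by $K\cup H$ strictly lowers $\mathcal{F}_\k$, contradicting minimality, while if $|H|\le\pi r^2$ then $H\subset\overline{K\oplus B_r}$, so the hole is $\mathcal{L}^2$-invisible in $E_\k$ and may be filled. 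Thus $\chi(K)=\#\{\text{components of }K\}$, and since the connected, simply connected competitor $C_0\oplus B_r$ (valid by \cref{prop:struttura_diff}) has $\mathcal{F}_\k[C_0\oplus B_r]=\pi r-|\Om^r|/r$ while a $K$ with $c$ components has $\mathcal{F}_\k[K\oplus B_r]=\pi r\,c-|K|/r\ge 2\pi r-|\Om^r|/r$ when $c\ge 2$, the minimizer must have $c=1$; then $\mathcal{F}_\k[K\oplus B_r]=\pi r-|K|/r$ forces $|K|=|\Om^r|$. When $\interior(\Om^r)\neq\emptyset$ this gives $\overline{\interior(\Om^r)}\subset K\subset\Om^r$ with $K$ connected and simply connected, which by the description of $\Om^r\setminus\overline{\interior(\Om^r)}$ in \cref{prop:struttura_diff} means exactly $K=C_\theta$ for some $\theta\colon\G^1_r\to[0,1]$: this is~(i), with $C_0\oplus B_r$ (contained in every $C_\theta\oplus B_r$) the unique minimal minimizer and $C_1\oplus B_r=\Om^r\oplus B_r$ the unique maximal one, and~(ii) is the subcase $\G^1_r=\emptyset$, in which $C_\theta\equiv\Om^r$. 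If instead $r=R_\Om$, then $\interior(\Om^r)=\emptyset$, so $\Om^r$ is a point or a $C^{1,1}$ curve by \cref{prop:struttura_diff}(a), every connected $K\subset\Om^r$ is a sub-arc with $|K|=0$, and $\mathcal{F}_\k[K\oplus B_r]=\pi r$ for all of them; hence each $K\oplus B_r$ is a minimizer, the balls $B_r(x)$, $x\in\Om^r$, are the minimal ones, and $\Om^r\oplus B_r$ is the unique maximal one, which is~(iii).
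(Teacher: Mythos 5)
Your reduction hinges on the claim that every minimizer can be written as $E_{\k}=K\oplus B_{r}$ with $K=\{x:\overline{B}_{r}(x)\subset E_{\k}\}$ satisfying $\reach(K)\ge r$, the reach bound being dismissed as ``a standard property of erosions by a ball of radius $r$''. It is not: for $E=B_{r}(x_{0})\cup B_{r}(x_{1})$ with $0<|x_{0}-x_{1}|<2r$ the erosion is the two-point set $\{x_{0},x_{1}\}$, whose reach is $|x_{0}-x_{1}|/2<r$. Since your entire endgame (the Steiner expansion $|K\oplus B_{\rho}|=|K|+\rho\,\mathcal{M}_o(K)+\pi\rho^{2}\chi(K)$ and the identity $\mathcal{F}_{\k}[K\oplus B_{r}]=\pi r\,\chi(K)-|K|/r$) is only valid for sets of reach at least $r$, the reach bound for the specific $K$ produced by a minimizer must be \emph{proved}, and that is precisely where the real work lies. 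The same is true of your ``core step'': the assertion that the osculating disc at a free boundary point lies inside $E_{\k}$ is the arc-ball property (\cref{lem:arcball}), whose difficulty is not that two radius-$r$ circles might be tangent, but that the full ball $B_{r}(y)$ might not even be contained in $\Om$; ruling this out requires growing a ball from the midpoint of the arc and invoking the no-neck hypothesis together with the rolling-ball lemma. Your sketch of steps (a) and (b) never uses the no-neck assumption, yet the statement is false without it (dumbbell-type examples), so the argument as written cannot be complete. The claim that $E_{\k}\supset C_{0}\oplus B_{r}$ (needed for your comparison competitor and, implicitly, for connectedness of $K$) is likewise a nontrivial consequence of \cref{prop:palla_interna}~(iii) and \cref{lem:rollingball}, not a byproduct of regularity.

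For comparison, the paper never applies the Steiner formula to the eroded set of an arbitrary minimizer: it applies it only to the sets $C_{\theta}$, whose reach bound is supplied by \cref{prop:struttura_diff}, to show that all $C_{\theta}\oplus B_{r}$ have the same energy as $E^{M}_{\k}=\Om^{r}\oplus B_{r}$; the reverse inclusion $E_{\k}\subset C_{\theta_{r}}\oplus B_{r}$ is then obtained by choosing $\theta_{r}$ maximal with $C_{\theta_{r}}\oplus B_{r}\subset E_{\k}$ and deriving a contradiction from the arc-ball and rolling-ball lemmas if anything is left over. Your Euler-characteristic bookkeeping (holes can be filled, $c\ge 2$ components cost an extra $\pi r$, hence $\chi(K)=1$ and $|K|=|\Om^{r}|$) is an elegant and genuinely different way to pin down $K$ once the representation $E_{\k}=K\oplus B_{r}$ with $\reach(K)\ge r$ is available, and it would streamline the identification step; but as it stands it is built on the one statement that the paper's machinery is designed to establish.
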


Apart from the technical assumption $|\de\Om|=0$, the other hypotheses of \cref{thm:main_shape_min} are sharp, as showed by suitable examples (see~\cite{LNS17, LS18b}). Moreover, the minimizers appearing in the theorem are also solutions of the isoperimetric problem~\eqref{eq:isop_prof} (with $V$ equal to their own volume). 

The next result shows the converse, that is, any solution of the isoperimetric problem~\eqref{eq:isop_prof}, for a prescribed volume $V\ge \pi R_{\Om}^{2}$, is also a minimizer of $\mathcal{F}_\k$ for some $\k\ge R_{\Om}^{-1}$. As a consequence, all isoperimetric solutions are geometrically characterized as in \cref{thm:main_shape_min}.

\begin{thm}\label{thm:m_iso}
Let $\Om$ be a Jordan domain with $|\de \Om|=0$ and without necks of radius $r$, for all $r\in(0, R_\Om]$. Then, for all $V\in [\pi R_{\Om}^{2},|\Omega|)$ there exists a unique $\kappa\in [R_\Om^{-1}, +\infty)$ such that a set $E$ of volume $V$ is isoperimetric if and only if it minimizes $\mathcal{F}_\kappa$ in $\mathcal{C}(\k)$.
\end{thm}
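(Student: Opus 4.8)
The plan is to prove \cref{thm:m_iso} by setting up a one-parameter family of prescribed-mean-curvature minimizers and showing that their volumes sweep out the whole interval $[\pi R_\Om^2, |\Om|)$ in a monotone (in fact continuous) fashion. Concretely, for each $\k\ge R_\Om^{-1}$ let $E^m_\k$ and $E^M_\k$ denote the minimal and maximal minimizers of $\mathcal F_\k$ in $\mathcal C(\k)$, whose existence and geometric description are guaranteed by \cref{thm:main_shape_min}: in all three cases of that theorem they are of the form $C_0\oplus B_r$ (or a ball of radius $r$ centered on $\Om^r$) and $\Om^r\oplus B_r$ respectively, with $r=\k^{-1}$. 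Define $v^m(\k)=|E^m_\k|$ and $v^M(\k)=|E^M_\k|$. First I would record the endpoint behaviour: as $\k\to R_\Om^{-1}$ (i.e.\ $r\to R_\Om$) we have $\Om^r\oplus B_r\to$ a ball of radius $R_\Om$, so both $v^m$ and $v^M$ tend to $\pi R_\Om^2$; and as $\k\to+\infty$ (i.e.\ $r\to 0$) the maximal minimizer $\Om^r\oplus B_r$ increases to $\Om$, so $v^M(\k)\to|\Om|$.

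Next I would establish monotonicity and a covering property. Using the inner-parallel-set structure from \cref{prop:struttura_diff}, the map $r\mapsto \Om^r\oplus B_r$ is monotone nonincreasing in $r$ (a standard fact: $\Om^{r}\oplus B_{r}\supseteq\Om^{r'}\oplus B_{r'}$ for $r\le r'$, since $\Om^{r}\oplus B_{r}$ is exactly the union of all balls of radius $r$ contained in $\Om$), hence $v^M$ is monotone nonincreasing in $r$, i.e.\ nondecreasing in $\k$; similarly $v^m$ is nondecreasing in $\k$ because $C_0\oplus B_r$ is a union of radius-$r$ balls and shrinking $r$ enlarges it. The key structural point — which I would extract from \cref{thm:main_shape_min}(i) — is that for a fixed $\k$ the volumes of \emph{all} minimizers of $\mathcal F_\k$ form a connected set: as $\theta$ ranges over all functions $\G^1_r\to[0,1]$, the sets $C_\theta\oplus B_r$ interpolate continuously (in volume) between $E^m_\k$ and $E^M_\k$, because each curve $\g\in\G^1_r$ contributes a volume $|\g([0,\theta(\g)])\oplus B_r|$ that depends continuously and monotonically on $\theta(\g)$, and $\G^1_r$ is finite. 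Thus the set $\{|E|:E\text{ minimizes }\mathcal F_\k\}$ is exactly the interval $[v^m(\k),v^M(\k)]$ (in case (iii) it is $[\pi R_\Om^2, v^M(\k)]$). Therefore the union over $\k\ge R_\Om^{-1}$ of these volume-intervals covers $[\pi R_\Om^2,|\Om|)$ provided consecutive intervals overlap or abut, i.e.\ provided $v^m$ has no upward jumps left uncovered; this follows once I show $v^M$ is left-continuous and $v^m$ is right-continuous in $\k$, which in turn reduces to continuity of $r\mapsto|\Om^r\oplus B_r|$ and $r\mapsto|C_0\oplus B_r|$ — a consequence of the $C^{1,1}$ regularity and finite-reach bounds in \cref{prop:struttura_diff} together with dominated convergence. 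A subtle point to handle is the transition at values of $\k$ where the topological type of $\Om^r$ changes (e.g.\ $\G^1_r$ appearing or disappearing, or $\interior(\Om^r)$ becoming empty), but these happen at only countably many $r$ and do not create volume gaps because $|\de\Om|=0$ forces $|\Om^r|\to|\Om^{r_0}|$ as $r\to r_0$.

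Having covered all volumes, I would then extract the uniqueness of $\k$ and the ``if and only if'' equivalence. For the forward implication: given $V\in[\pi R_\Om^2,|\Om|)$, pick $\k$ with $V\in[v^m(\k),v^M(\k)]$; by the interpolation above there is a minimizer $E_\k$ of $\mathcal F_\k$ with $|E_\k|=V$, and any minimizer of $\mathcal F_\k$ of volume $V$ is automatically isoperimetric (standard: if $P(E')<P(E_\k)$ with $|E'|=|E_\k|=V$ then $\mathcal F_\k[E']<\mathcal F_\k[E_\k]$, and $E'\in\mathcal C(\k)$ since $V\ge\pi R_\Om^2\ge (n-1)^n\omega_n\k^{-n}$ in dimension $n=2$ gives $V\ge\pi\k^{-2}\ge\pi R_\Om^{-2}\cdot$…, i.e.\ the volume constraint in $\mathcal C(\k)$ is met, a contradiction). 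For the reverse implication and uniqueness of $\k$: I would argue that $v^m$ and $v^M$ are \emph{strictly} increasing in $\k$ on the relevant range — strict monotonicity of $r\mapsto|\Om^r\oplus B_r|$ holds because decreasing $r$ strictly enlarges the union of inscribed balls whenever $\Om^r\oplus B_r\subsetneq\Om$, and one checks this persists down to $r=\k^{-1}$ using the no-neck hypothesis — so the intervals $[v^m(\k),v^M(\k)]$ are ``increasing'' and their possible overlaps occur only at shared endpoints; consequently each $V$ lies in a unique such interval (or, at an overlap point $V=v^M(\k_0)=v^m(\k_1)$ with $\k_0<\k_1$, the isoperimetric set $\Om^{\k_0^{-1}}\oplus B_{\k_0^{-1}}$ is simultaneously the maximal minimizer for $\k_0$ and a minimizer for $\k_1$, but by maximality/minimality it is the unique one of that volume and one shows $\k_0=\k_1$, ruling out genuine overlap). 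Then any isoperimetric set $E$ of volume $V$ has perimeter $P(E)=\mathcal J(V)=P(E_\k)$, hence $\mathcal F_\k[E]=\mathcal F_\k[E_\k]$, so $E$ minimizes $\mathcal F_\k$ in $\mathcal C(\k)$; uniqueness of $\k$ follows since a different $\k'$ would force $V\in[v^m(\k'),v^M(\k')]$, contradicting the unique-interval property.

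\medskip
\noindent\textbf{Main obstacle.} The hard part will be proving that the volume map $\k\mapsto v^M(\k)$ (and $v^m$) is continuous and strictly monotone, since this is what guarantees both that the volume-intervals $[v^m(\k),v^M(\k)]$ genuinely tile $[\pi R_\Om^2,|\Om|)$ with no gaps and that the parameter $\k$ is uniquely determined. Continuity requires controlling how $\Om^r$ and the curve families $\G^1_r,\G^2_r$ from \cref{prop:struttura_diff} vary with $r$ — in particular across the discrete set of ``topological transition'' radii — and here one must lean on $|\de\Om|=0$ (to kill volume jumps) and on the uniform $C^{1,1}$/reach bounds (to get Hausdorff convergence of the relevant sets and then dominated convergence for their Minkowski expansions). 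Strict monotonicity needs the no-neck property in an essential way: without it, $\Om^r\oplus B_r$ could stagnate over an interval of radii, and the argument identifying a unique $\k$ for each $V$ would collapse.
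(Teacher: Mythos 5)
Your overall architecture is the same as the paper's: sweep the volumes with the two families $v^m(\k)=|E^m_\k|$ and $v^M(\k)=|E^M_\k|$, use nestedness (\cref{cor:nestedness}) plus a semicontinuity statement to locate a critical curvature $\hat\k$ with $v^m(\hat\k)\le V\le v^M(\hat\k)$, and then fill the interval with the $C_\theta\oplus B_r$ interpolation from \cref{thm:main_shape_min}. The Legendre-type equivalence between isoperimetric sets and $\mathcal F_\k$-minimizers at the end, and the uniqueness of $\k$ via strict nestedness (\cref{rem:nestedness}), are also as in the paper. However, the step you yourself flag as the main obstacle is resolved with a false statement, and the error is not cosmetic: you claim that $v^M$ is left-continuous and $v^m$ is right-continuous in $\k$, reducing this to \emph{continuity} of $r\mapsto|\Om^r\oplus B_r|$ and $r\mapsto|C_0\oplus B_r|$. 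Neither map is continuous in general. At any radius $r_0$ with $\G^1_{r_0}\neq\emptyset$ one has $|C_0\oplus B_{r_0}|=v^m(\k_0)<v^M(\k_0)=|\Om^{r_0}\oplus B_{r_0}|$ with $\k_0=r_0^{-1}$, while nestedness forces $v^M(\k)\le v^m(\k_0)$ for all $\k<\k_0$ and $v^m(\k)\ge v^M(\k_0)$ for all $\k>\k_0$; hence $v^M$ jumps from the left and $v^m$ jumps from the right at $\k_0$ --- precisely the case where the interpolation by $C_\theta$ is needed. The semicontinuities that are actually true (and proved in \cref{lem:upsc}) go the other way: $r\mapsto|E^M_{r^{-1}}|$ is upper semicontinuous and $r\mapsto|E^m_{r^{-1}}|$ is lower semicontinuous, equivalently $v^M$ is right-continuous and $v^m$ is left-continuous in $\k$. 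These are also exactly the directions your covering argument needs: at the critical $\hat\k$ separating $\{v^M<V\}$ from $\{v^m>V\}$, upper semicontinuity of $v^M$ gives $v^M(\hat\k)\ge\lim_{\k\to\hat\k^+}v^M(\k)\ge V$ and lower semicontinuity of $v^m$ gives $v^m(\hat\k)\le\lim_{\k\to\hat\k^-}v^m(\k)\le V$, after which the $C_\theta$ family closes the gap. With the directions as you stated them the argument cannot be completed, because the properties you invoke fail exactly at the transition radii you are trying to control.

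Two smaller inaccuracies: the assertion that $\Om^r\oplus B_r$ tends to a ball of radius $R_\Om$ as $r\to R_\Om$ is false whenever $\Om^{R_\Om}$ is a nondegenerate curve (e.g.\ a rectangle, where the limit is a stadium); what you actually need at the bottom endpoint is only that $v^m(R_\Om^{-1})=\pi R_\Om^2$, which holds by \cref{thm:main_shape_min}~(iii) since any inball is a minimal minimizer. Also, the ``overlap at shared endpoints'' scenario $V=v^M(\k_0)=v^m(\k_1)$ with $\k_0<\k_1$ cannot occur at all for $0<V<|\Om|$, by the strict inclusion $E^m_{\k_1}\supsetneq E^M_{\k_0}$ of \cref{rem:nestedness}, so the uniqueness discussion can be shortened accordingly.
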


This theorem has few consequences. First, it represents an extension of~\cite[Theorem~3.32]{SZ97}, where the authors classify isoperimetric sets within $2$d convex sets, to the much richer class of Jordan domains without necks (for instance, the Koch snowflake belongs to this class, see~\cite[Section 6]{LNS17}). Second, it supports some algorithmic procedures for the construction of isoperimetric sets and for the computation of the isoperimetric profile, see~\cite{ZdFS20}, where the authors had in mind the political phenomenon of gerrymandering, which can be discussed using isoperimetric arguments, see also~\cite{FLSS19, SS19}. Indeed, in~\cite{ZdFS20} the authors numerically computed the isoperimetric profile of a Jordan domain $\Om$ by implicitly assuming the characterization of minimizers that we have now completely proved here. We remark that in such numerical applications, the way to go is to consider the cut locus $\mathcal{C}$ (also known as medial axis), i.e., the set of points in $\Om$ where the distance function from the boundary is not differentiable. This happens precisely because such points have more than one projection on the boundary. To every $x\in \mathcal{C}$ one can associate the so-called \emph{medial axis transform} $f(x)=\dist(x; \de \Om)$, which simply evaluates the distance from the boundary. Then, one notices that $f$ attains its maximum on the set $\Om^{R_\Om}$. As the cut locus has a tree structure, see~\cite[Section~3]{LNS17}, one can prove that the function $f$ decreases while moving away from $\Om^{R_\Om}$, and it is locally constant at $x$ only if the set $\Om$ is such that $\G^1_{f(x)}\neq \emptyset$. Then, in order to build isoperimetric sets, one is lead to consider the union
\[
\bigcup_{\substack{x\in \mathcal{C}\\ f(x)>r}} B_r(x)\,.
\]
Whenever $\Om$ is such that $\G^1_r=\emptyset$ for all $r\le R_\Om$, \cref{thm:main_shape_min} and~\cref{thm:m_iso} guarantee that the above procedure yields all isoperimetric sets.

Third and finally, it allows us to prove the following result about convexity properties of the isoperimetric profile $\mathcal{J}$ (see \cref{sec:convexity}). 

\begin{thm}\label{thm:convexity_J}
Let $\Om$ be a Jordan domain with $|\de \Om|=0$ and without necks of radius $r$, for all $r\in(0, R_\Om]$. Then, the isoperimetric profile $\mathcal{J}$ is convex in $[\pi R^2_\Om, |\Om|]$, while $\mathcal{J}^2$ is convex in $[0,|\Om|]$.
\end{thm}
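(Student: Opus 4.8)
The plan is to realise $\mathcal{J}$, on the range $[\pi R_\Om^2,|\Om|]$, as a supremum of affine functions --- which forces convexity --- and then to deduce the convexity of $\mathcal{J}^2$ by post-composition with $s\mapsto s^2$, after handling the complementary range $[0,\pi R_\Om^2]$ via the classical isoperimetric inequality. The whole argument rests on \cref{thm:m_iso}, but uses nothing about the shape of minimizers from \cref{thm:main_shape_min}.

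For every $\kappa\ge R_\Om^{-1}$ I would set $m(\kappa):=\inf\{\mathcal{F}_\kappa[E]:E\in\mathcal{C}(\kappa)\}$, which is finite (bounded below using $P(E)\ge h_\Om|E|$ and $|E|\le|\Om|$). The crucial point is that $\kappa\ge R_\Om^{-1}$ forces $\pi\kappa^{-2}\le\pi R_\Om^2$, so \emph{every} Borel set $E\subset\Om$ with $|E|\ge\pi R_\Om^2$ lies in $\mathcal{C}(\kappa)$ --- directly when $\kappa>h_\Om$, and because $|E|\ge\pi R_\Om^2\ge\pi\kappa^{-2}=(n-1)^{n}\omega_n\kappa^{-n}$ (here $n=2$) when $R_\Om^{-1}\le\kappa\le h_\Om$. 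Hence $P(E)=\mathcal{F}_\kappa[E]+\kappa|E|\ge\kappa|E|+m(\kappa)$, and minimizing over $E$ with $|E|=V$ gives
\[
\mathcal{J}(V)\ge\kappa V+m(\kappa)\qquad\text{for all }V\in[\pi R_\Om^2,|\Om|]\text{ and all }\kappa\ge R_\Om^{-1}.
\]
On the other hand, for $V\in[\pi R_\Om^2,|\Om|)$ \cref{thm:m_iso} provides $\kappa(V)\ge R_\Om^{-1}$ and a minimizer $E_{\kappa(V)}$ of $\mathcal{F}_{\kappa(V)}$ with $|E_{\kappa(V)}|=V$ and $P(E_{\kappa(V)})=\mathcal{J}(V)$, so that the displayed inequality becomes an equality at $\kappa=\kappa(V)$. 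Thus $\mathcal{J}$ coincides on $[\pi R_\Om^2,|\Om|]$ with the supremum of the affine maps $t\mapsto\kappa t+m(\kappa)$, $\kappa\ge R_\Om^{-1}$, hence is convex there. Concretely, for $\pi R_\Om^2\le V_1<V_2<V_3\le|\Om|$ one applies the display at $V_1,V_3$ together with its equality case at the (strictly interior) point $V_2$ with $\kappa=\kappa(V_2)$ to get $\mathcal{J}(V_2)\le\tfrac{V_3-V_2}{V_3-V_1}\mathcal{J}(V_1)+\tfrac{V_2-V_1}{V_3-V_1}\mathcal{J}(V_3)$; note that at the endpoint $V_3=|\Om|$ only the inequality of the display is used.

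For $\mathcal{J}^2$: on $[0,\pi R_\Om^2]$ the classical isoperimetric inequality gives $\mathcal{J}(V)=2\sqrt{\pi V}$, so $\mathcal{J}^2(V)=4\pi V$ is affine; on $[\pi R_\Om^2,|\Om|]$ the function $\mathcal{J}\ge0$ is convex by the previous step, hence $\mathcal{J}^2$ is convex as the composition of $\mathcal{J}$ with the convex nondecreasing map $s\mapsto s^2$ on $[0,\infty)$. It remains to glue at $V=\pi R_\Om^2$: the left derivative of $\mathcal{J}^2$ there equals $4\pi$, while the right one equals $2\,\mathcal{J}(\pi R_\Om^2)\,\mathcal{J}'_+(\pi R_\Om^2)=4\pi R_\Om\,\mathcal{J}'_+(\pi R_\Om^2)$. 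Since $B_{R_\Om}$ is isoperimetric for the volume $\pi R_\Om^2$, \cref{thm:m_iso} makes it a minimizer of $\mathcal{F}_{\kappa(\pi R_\Om^2)}$, and the constant-curvature property of such minimizers (curvature $\kappa$, recalled in the introduction) forces $\kappa(\pi R_\Om^2)=R_\Om^{-1}$; the equality case then yields $\mathcal{J}(\pi R_\Om^2)=R_\Om^{-1}\pi R_\Om^2+m(R_\Om^{-1})$, so the affine map $t\mapsto R_\Om^{-1}t+m(R_\Om^{-1})$ touches $\mathcal{J}$ from below at $\pi R_\Om^2$ and hence $\mathcal{J}'_+(\pi R_\Om^2)\ge R_\Om^{-1}$. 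Therefore the right derivative of $\mathcal{J}^2$ at $\pi R_\Om^2$ is $\ge4\pi$, the two one-sided derivatives are ordered correctly, and $\mathcal{J}^2$ is convex on all of $[0,|\Om|]$. (If $\Om$ is a ball then $|\Om|=\pi R_\Om^2$ and both statements are classical.)

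The main obstacle is really \cref{thm:m_iso} itself: everything above hinges on the fact that each isoperimetric set of volume $V\ge\pi R_\Om^2$ arises as a minimizer of $\mathcal{F}_\kappa$ over $\mathcal{C}(\kappa)$ for some $\kappa\ge R_\Om^{-1}$, which is exactly what the volume lower bound $\pi\kappa^{-2}\le\pi R_\Om^2$ in~\eqref{eq:pmc_mod} secures, and which fails below $\pi R_\Om^2$, where $\mathcal{J}$ is concave instead. Granting that input, the remaining ingredients --- finiteness of $m(\kappa)$, the supremum-of-affine-functions representation, and the matching of one-sided derivatives at the threshold $\pi R_\Om^2$ --- are routine.
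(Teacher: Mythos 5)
Your proof is correct and follows essentially the same route as the paper: \cref{thm:m_iso} is used to realize $\mathcal{J}$ on $[\pi R^2_\Om,|\Om|]$ as a supremum of the affine functions $V\mapsto \kappa V+m(\kappa)$ (which is exactly the Legendre-transform identity of \cref{prop:legendre_transform}, unwound), and global convexity of $\mathcal{J}^2$ is then obtained by matching one-sided derivatives at $\pi R^2_\Om$. The only, harmless, deviation is at that junction: where you identify $\kappa(\pi R_\Om^2)=R_\Om^{-1}$ and use the supporting line of that slope together with the chain rule, the paper simply observes that the planar profile $\mathcal{I}^2(V)=4\pi V$ lies below $\mathcal{J}^2$ everywhere and touches it at $\pi R^2_\Om$, which forces $(\mathcal{J}^2)'_+(\pi R^2_\Om)\ge 4\pi=(\mathcal{J}^2)'_-(\pi R^2_\Om)$ without any appeal to the curvature of the inball.
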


The proof of convexity of $\mathcal{J}$ essentially relies on the existence of a threshold $\overline{V}$ such that minimizers of $\mathcal{J}$ also minimize $\mathcal{F}_\k$ for a suitable $\k$. It is worth noticing that it does not rely on the precise shape of the minimizer. Indeed, using the results of~\cite{ACC05}, we can prove as well that the isoperimetric profile for $V\ge |E^m_{h_\Om}|$ is convex for $n$-dimensional, convex, $C^{1,1}$ regular sets $\Om$, see \cref{sec:dim_n}.

\section{Shape of minimizers}\label{sec:shape_min}

This section is dedicated to the proof of \cref{thm:main_shape_min}, which is contained in \cref{ssec:proof_shape}. We recall that from now on (unless explicitly stated) $\Omega$ denotes an open bounded set in $\R^{2}$. Prior to the proof, we need to discuss some properties of minimizers {\color{blue} of~\eqref{eq:pmc_mod}}, see \cref{ssec:properties}. Moreover, when $\Om$ is a Jordan domain, one has some additional properties, see \cref{ssec:jordan_domains}. These results are generalizations and/or adaptations of those already known in the case $\k\ge h_\Om$. Whenever the adaptation is straightforward, no proof is given and a reference is provided. 

\subsection{Properties of minimizers}\label{ssec:properties}

First thing we need to prove is that there exist minimizers of $\mathcal{F}_\k$ when $R_\Om^{-1}\le \k< h_\Om$. One easily checks it by taking a minimizing sequence which, up to subsequences, is shown to converge in the $BV$ topology. By lower semicontinuity of the perimeter, the limit set is a minimizer, provided it belongs to $\mathcal{C}(\k)$. Details are given in the following proposition.

\begin{prop}\label{prop:existence}
Let $\k\ge R_\Om^{-1}$. There exist non trivial minimizers of \eqref{eq:pmc_mod}, that is, of $\mathcal{F}_\kappa$ restricted to the class $\mathcal{C}(\kappa)$.
\end{prop}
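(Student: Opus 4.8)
The plan is to establish existence by the direct method of the calculus of variations, taking care that the extra volume constraint in the definition of $\mathcal{C}(\k)$ does not cause trouble in the passage to the limit. First I would show that the infimum in \eqref{eq:pmc_mod} is finite and strictly negative, so that a minimizing sequence cannot degenerate to the empty set: for $\k \geq R_\Om^{-1} = (n-1)R_\Om^{-1}$ (here $n=2$), the ball $B$ of radius $r = \k^{-1}$ centered at an incenter of $\Om$ is contained in $\Om$, belongs to $\mathcal{C}(\k)$ since $|B| = \pi r^2 = \pi \k^{-2} = (n-1)^n \omega_n \k^{-n}$, and satisfies $\mathcal{F}_\k[B] = 2\pi r - \k \pi r^2 = \pi r(2 - \k r) = \pi r > 0$... wait, more carefully: $P(B) = 2\pi r$ and $\k|B| = \k \pi r^2 = \pi r$, so $\mathcal{F}_\k[B] = 2\pi r - \pi r = \pi r > 0$, which is not negative. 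So instead one should note simply that the infimum $m_\k := \inf\{\mathcal{F}_\k[E] : E \in \mathcal{C}(\k)\}$ is a real number, finite since $\mathcal{F}_\k[E] = P(E) - \k|E| \geq -\k|\Om| > -\infty$ for every admissible $E$, and $m_\k \leq \mathcal{F}_\k[B] < +\infty$; nontriviality of the minimizer will instead follow automatically because in the regime $R_\Om^{-1} \le \k < h_\Om$ the empty set is simply \emph{not} a competitor (it violates the volume lower bound), while in the regime $\k > h_\Om$ one uses that $\mathcal{F}_\k[\emptyset] = 0$ but there exist sets $E$ with $P(E)/|E| < \k$, so $m_\k < 0$ and the empty set is not optimal.

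Next I would take a minimizing sequence $(E_j) \subset \mathcal{C}(\k)$ with $\mathcal{F}_\k[E_j] \to m_\k$. Since $E_j \subset \Om$ we have $|E_j| \le |\Om|$, and from $\mathcal{F}_\k[E_j] \le m_\k + 1$ for $j$ large we get the uniform perimeter bound $P(E_j) \le m_\k + 1 + \k|\Om|$; hence $(\mathbf{1}_{E_j})$ is bounded in $BV(\R^2)$ with supports contained in the fixed bounded set $\overline\Om$. By the standard $BV$ compactness theorem, up to a subsequence $\mathbf{1}_{E_j} \to \mathbf{1}_{E_\k}$ in $L^1$ for some set $E_\k$ of finite perimeter, with $E_\k \subset \overline\Om$ up to a null set; using $|\de\Om| = 0$ one may take $E_\k \subset \Om$ Borel. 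By $L^1$ convergence, $|E_j| \to |E_\k|$, and by lower semicontinuity of the perimeter $P(E_\k) \le \liminf_j P(E_j)$; combining, $\mathcal{F}_\k[E_\k] \le \liminf_j \mathcal{F}_\k[E_j] = m_\k$.

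It remains to verify $E_\k \in \mathcal{C}(\k)$, which in the delicate regime $\frac{n-1}{R_\Om} \le \k \le h_\Om$ means checking the volume lower bound $|E_\k| \ge (n-1)^n \omega_n \k^{-n} = \pi \k^{-2}$. This is immediate from the fact that each $|E_j| \ge \pi\k^{-2}$ together with $|E_j| \to |E_\k|$: the constraint passes to the limit because it is closed under $L^1$ convergence. Hence $E_\k \in \mathcal{C}(\k)$, so $\mathcal{F}_\k[E_\k] \ge m_\k$, and therefore $\mathcal{F}_\k[E_\k] = m_\k$; that is, $E_\k$ is a minimizer. Finally $E_\k$ is nontrivial: in the range $\k > h_\Om$ because $m_\k < 0 = \mathcal{F}_\k[\emptyset]$ as noted above, and in the range $R_\Om^{-1} \le \k \le h_\Om$ because $|E_\k| \ge \pi\k^{-2} > 0$. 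I do not expect a genuine obstacle here — the only point requiring a moment's care is confirming that the added volume lower bound is preserved under the weak limit, but since volume is $L^1$-continuous (not merely semicontinuous) this is precisely the easy direction, which is why the modified class $\mathcal{C}(\k)$ was designed this way.
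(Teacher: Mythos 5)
Your proposal is correct and follows essentially the same route as the paper: a direct-method argument in which the energy bound yields a uniform perimeter bound, $BV$ compactness gives an $L^1$-convergent subsequence, the volume lower bound defining $\mathcal{C}(\k)$ passes to the limit by $L^1$-continuity of the volume, and lower semicontinuity of the perimeter concludes. The paper simply dispatches the case $\k\ge h_\Om$ as ``well known'' and runs this argument only for $R_\Om^{-1}\le\k<h_\Om$, whereas you spell out both regimes; this is a harmless difference.
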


\begin{proof}
If $\k\ge h_\Om$ this is well known. Let now $R_\Om^{-1}\le \k<h_\Om$, and notice that this choice ensures that the class of competitors is nonempty. Moreover, the functional is clearly bounded from below by $-\k|\Om|$, thus we can pick $\{E_h\}_h$ a minimizing sequence in $\mathcal{C}(\k)$. Without loss of generality we may assume that
\[
P(E_h) -\k |E_h| \le \inf_{\mathcal{C}(\k)}\{\mathcal{F}_\k[E]\} + 1\,,
\]
and thus
\[
P(E_h) \le \inf_{\mathcal{C}(\k)}\{\mathcal{F}_\k[E]\} + 1 + \k|\Om| \le \text{const}\,,
\]
as $\Om$ is bounded. Therefore, up to subsequences, $E_h$ converges in the $L^{1}$ topology to a limit set $E$. As $|E_h|\ge \pi \k^{-2}$ for all $h$, by taking the limit as $h\to \infty$ we infer $|E|\ge \pi \k^{-2}$. This shows that $E$ belongs to $\mathcal{C}(\k)$, hence the lower semicontinuity of the perimeter yields the fact that $E$ is indeed a nontrivial minimizer of $\mathcal{F}_{\k}$ in $\mathcal{C}(\k)$.
\end{proof}

There are several, well-established properties of non trivial minimizers of $\mathcal{F}_\k$, for $\k\ge h_\Om$, which hold the same for $\k\ge R_\Om^{-1}$. We recall them below.

\begin{prop}\label{prop:properties}
Let $E_\k$ be a minimizer of \eqref{eq:pmc_mod}, that is, of $\cF_\k$ restricted to $\mathcal{C}(\kappa)$. Then, the following statements hold true:
\begin{itemize}
\item[(i)] $\de E_\k \cap \Om$ is analytic and coincides with a countable union of circular arcs of curvature $\k$, with endpoints belonging to $\de\Om$;
\item[(ii)] the length of any connected component of $\de E_\k \cap \Om$ cannot exceed $\pi \k^{-1}$;
\item[(iii)] for $\Omega$ with locally finite perimeter, if $x\in \de E_\k \cap \de^* \Om$, then $x\in \de^*E_\k$ and $\nu_\Om(x) = \nu_{E_\k}(x)$.
\end{itemize}
\end{prop}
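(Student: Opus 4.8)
The plan is to establish the three properties of \cref{prop:properties} as essentially local statements that are insensitive to the volume constraint appearing in the definition of $\mathcal{C}(\k)$. The key observation is that for $\k \ge h_\Om$ these facts are known (they are standard consequences of the regularity theory for $(\Lambda, r_0)$-minimizers, or equivalently almost-minimizers, of the perimeter), and that a minimizer $E_\k$ of \eqref{eq:pmc_mod} in the regime $R_\Om^{-1} \le \k < h_\Om$ is, away from the constraint boundary, still a local minimizer of $\mathcal{F}_\k$ among all competitors contained in $\Om$. So the first step is to verify that the lower bound $|E| \ge \pi \k^{-2}$ is never active for local perturbations: given $E_\k$ with $|E_\k| \ge \pi \k^{-2}$, any competitor $F$ obtained by modifying $E_\k$ inside a sufficiently small ball $B_\rho(x)$ satisfies $|F| \ge |E_\k| - \pi \rho^2$, and one can either choose $\rho$ small enough that $F$ still lies in $\mathcal{C}(\k)$, or — in the borderline case where $|E_\k| = \pi\k^{-2}$ exactly — observe that decreasing the volume strictly decreases no term we care about since we may always compare with volume-increasing perturbations as well, so that $E_\k$ is a genuine local $\mathcal{F}_\k$-minimizer in the interior sense. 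I would phrase this as: $E_\k$ is a local minimizer of $P(\cdot) - \k|\cdot|$ among subsets of $\Om$, hence a $(\k, r_0)$-almost minimizer of the perimeter in $\Om$ for any $r_0>0$, with the standard $\Lambda = \k$.

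Once that reduction is in place, part (i) follows exactly as in the case $\k \ge h_\Om$: by De Giorgi's regularity theory $\partial E_\k \cap \Om$ is analytic (away from a singular set of codimension $\ge 8$, which in the plane is empty), the first variation of $\mathcal{F}_\k$ forces the (scalar) curvature of $\partial E_\k \cap \Om$ to equal $\k$, and in $\R^2$ a curve of constant curvature $\k$ is a circular arc of radius $\k^{-1}$; the countability of the family of arcs and the fact that their endpoints lie on $\partial\Om$ come from the structure of the reduced boundary together with the containment $E_\k \subset \Om$. Here I would simply cite the corresponding statement in \cite{LS20} (and the classical references therein), noting that the proof is verbatim the same since it only uses interior local minimality.

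For part (ii), the argument is the standard ``chord'' comparison: if a connected component $\Sigma$ of $\partial E_\k \cap \Om$ is a circular arc of radius $\k^{-1}$ subtending an angle greater than $\pi$, then replacing the corresponding circular segment cut off by its chord with the complementary region — i.e. performing the reflection/cut-and-paste across the chord — produces a competitor with strictly smaller $\mathcal{F}_\k$, because for an arc longer than a half-circle the chord is shorter than the arc while the enclosed area changes favorably; one checks the sign of $\Delta P - \k \Delta |E|$ is negative. The only point requiring care is that the modified set must still be contained in $\Om$ and still lie in $\mathcal{C}(\k)$, and for this one uses that $\k^{-1} \le R_\Om$ (so the relevant ball fits) together with the no-neck hypothesis, exactly as in \cite[Lemma~5.3 or its analogue]{LS20}. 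Part (iii) is the contact condition: at a point $x \in \partial E_\k \cap \partial^*\Om$, minimality forbids $\partial E_\k$ from crossing $\partial\Om$ transversally (that would allow a volume- and perimeter-decreasing local adjustment), so the measure-theoretic normals must coincide; this again is a local first-variation argument identical to the one for $\k \ge h_\Om$.

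The main obstacle — though it is a mild one — is precisely the first reduction step, namely making sure that the volume constraint in $\mathcal{C}(\k)$ does not obstruct the local perturbations used in the regularity and comparison arguments. Since all perturbations involved are supported in arbitrarily small balls and can be taken in both the volume-increasing and volume-decreasing directions, this is handled by the observation above, but it must be stated explicitly because it is the only genuine difference from the setting of \cite{LS20}; everything else is a reference. Accordingly I would keep the proof short: state that $E_\k$ is an interior local minimizer of $\mathcal{F}_\k$, deduce it is a $(\k,r_0)$-almost minimizer of the perimeter, and then invoke the cited results from \cite{LS20} for (i)--(iii).
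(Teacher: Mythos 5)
Your reduction to interior local minimality is correct, and coincides with the paper's argument, precisely when $|E_\k|>\pi\k^{-2}$: there the volume constraint in $\mathcal{C}(\k)$ is inactive for perturbations supported in sufficiently small balls, both inner and outer variations are admissible, $E_\k$ is a $(\Lambda,r_0)$-minimizer with $\Lambda=\k$, and (i)--(iii) follow as in the regime $\k\ge h_\Om$. The gap is the borderline case $|E_\k|=\pi\k^{-2}$. There every volume-decreasing perturbation leaves $\mathcal{C}(\k)$, so you may only test $E_\k$ against outer variations, and the first variation yields only the one-sided conclusion that the curvature of $\de E_\k\cap\Om$ is not smaller than $\k$ --- not the equality claimed in (i). Your sentence ``decreasing the volume strictly decreases no term we care about since we may always compare with volume-increasing perturbations as well'' does not repair this: local minimality means comparison against \emph{all} local competitors, and the competitors you are forbidden from using are exactly the ones that would give the reverse curvature inequality. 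This is not a removable technicality; it is the entire content of \cref{rem:k_piu_piccoli}, and it is the point where the hypothesis $\k\ge R_\Om^{-1}$ must enter.

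The paper closes this case by a global, not local, argument: since $\k^{-1}\le R_\Om$, the domain $\Om$ contains a ball of radius $\k^{-1}$, which is an admissible competitor of volume exactly $\pi\k^{-2}$; by the isoperimetric inequality every set of that volume has perimeter at least $2\pi\k^{-1}$, with equality only for balls, so a minimizer with $|E_\k|=\pi\k^{-2}$ must itself be such a ball, for which the stated properties are checked directly. You should replace the hand-waving in the borderline case with this argument. Two smaller remarks: for (ii) you invoke the no-neck hypothesis, which is not among the assumptions of the proposition (the paper simply cites the chord-cutting argument of~\cite[Lemma~2.11]{LP16}, which does not need it); and your cut-and-paste competitor in (ii) decreases the volume, so its admissibility in $\mathcal{C}(\k)$ must be noted --- it holds away from the borderline case, and in the borderline case the minimizer has already been identified as a ball.
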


Point~(i) for $\k > h_\Om$ comes from the regularity of perimeter minimizers, and the condition of the curvature directly from writing down the first variation of the functional, refer for instance to~\cite[Section~17.3]{Mag12book}. For $\k\le h_\Om$ one has to consider two cases in view of the definition of $\mathcal{C}(\kappa)$. Either $|E_\k|> \pi \k^{-2}$, and then one is allowed to make variations changing the volume both from above and below, thus obtaining the same result. Or the equality $|E_\k|= \pi \k^{-2}$ holds, and then $\Omega$ contains a ball of curvature $\k$ because it contains a ball of radius $R_{\Omega}$. Therefore, by the isoperimetric inequality, all such balls are the only minimizers of $\mathcal{F}_\k$. Point~(ii) can be proved exactly the same as in~\cite[Lemma~2.11]{LP16}. Point~(iii) comes from regularity properties of $(\Lambda, r_0)$-minimizers; a proof for Lipschitz $\Om$ is available in~\cite{GMT81}, while for sets $\Om$ with just locally finite perimeter we refer to~\cite[Theorem~3.5]{LS18a}. 

\begin{rem}\label{rem:k_piu_piccoli}
There is nothing preventing us from minimizing $\mathcal{F}_\kappa$ in the class $\{E\subset \Omega\,, \text{Borel, s.t. } |E|\ge \pi\k^{-2}\,\}$, for $\sqrt{\pi}|\Om|^{-\frac 12}\le \k < R_{\Om}^{-1}$ (this choice of $\k$ ensures that the class is nonempty). The issue here is that property~(i) can be no more guaranteed. Indeed, if a minimizer $E_\k$ has volume exactly equal to $\pi\k^{-2}$, we are only allowed to make outer variations, obtaining that the curvature of $\de E_{\k}\cap \Om$ is not smaller than $\k$. In other words, the lack of a ball contained in $\Omega$ with volume at least $\pi \k^{-2}$ prevents us from recovering an equality on the curvature, leaving us just with an inequality. A similar remark is valid in the general, $n$-dimensional case.
\end{rem}

For $\k\ge h_\Om$ it is well-known that the class of minimizers is closed with respect to countable union and intersections, see for instance the first part of the proof of~\cite[Proposition~3.2]{LS20} or~\cite[Lemma~2.2 and Remark~4.2]{CCN10}. As $\k$ drops below $h_\Om$ this is still true, provided that the intersection is still a viable competitor, as we show in the next lemma.

\begin{prop}\label{prop:closed_class}
Let $\k \in [R_\Om^{-1}, h_\Om)$, and let $E_\k, F_\k$ be minimizers of $\mathcal{F}_\k$. Assume that $|E_\k\cap F_\k|\ge \pi \kappa^{-2}$. Then, the union $E_\k \cup F_\k$ and the intersection $E_\k \cap F_\k$ are minimizers of $\mathcal{F}_\k$.
\end{prop}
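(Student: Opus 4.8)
The plan is to use the standard submodularity of the perimeter together with the minimality of $E_\k$ and $F_\k$, being careful that both $E_\k\cap F_\k$ and $E_\k\cup F_\k$ are admissible competitors in $\mathcal{C}(\k)$. First I would recall the submodularity inequality for sets of finite perimeter,
\[
P(E_\k\cup F_\k) + P(E_\k\cap F_\k) \le P(E_\k) + P(F_\k)\,,
\]
and the trivial additivity of the volume,
\[
|E_\k\cup F_\k| + |E_\k\cap F_\k| = |E_\k| + |F_\k|\,.
\]
Multiplying the second identity by $\k$ and subtracting from the first gives
\[
\cF_\k[E_\k\cup F_\k] + \cF_\k[E_\k\cap F_\k] \le \cF_\k[E_\k] + \cF_\k[F_\k]\,.
\]

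Next I would check admissibility. Since $\Om$ is the ambient domain, both $E_\k\cup F_\k$ and $E_\k\cap F_\k$ are Borel subsets of $\Om$. Because $\k\in[R_\Om^{-1},h_\Om)$, membership in $\mathcal{C}(\k)$ also requires the volume lower bound $|E|\ge \pi\k^{-2}$: for the intersection this is exactly the hypothesis $|E_\k\cap F_\k|\ge \pi\k^{-2}$, while for the union it follows from $|E_\k\cup F_\k|\ge |E_\k|\ge \pi\k^{-2}$ since $E_\k$ itself is admissible. Hence both sets lie in $\mathcal{C}(\k)$, and by minimality of $E_\k$ and $F_\k$ we have $\cF_\k[E_\k\cup F_\k]\ge \cF_\k[E_\k]=\cF_\k[F_\k]=:m$ (the last equality because $E_\k$ and $F_\k$ are both minimizers, so they attain the same minimal value) and likewise $\cF_\k[E_\k\cap F_\k]\ge m$. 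Combined with the displayed inequality $\cF_\k[E_\k\cup F_\k]+\cF_\k[E_\k\cap F_\k]\le 2m$, both inequalities must be equalities, so $\cF_\k[E_\k\cup F_\k]=\cF_\k[E_\k\cap F_\k]=m$, i.e. the union and the intersection are minimizers.

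I do not expect a serious obstacle here; the only point requiring a little care is the admissibility bookkeeping, namely that the extra volume constraint defining $\mathcal{C}(\k)$ for $\k<h_\Om$ is preserved — this is precisely why the hypothesis $|E_\k\cap F_\k|\ge\pi\k^{-2}$ is imposed, and why the statement restricts to $\k\ge R_\Om^{-1}$ (so that $h_\Om$ is the relevant threshold and the class is the one in~\eqref{eq:classCk}). A minor formal remark is that submodularity of the perimeter is usually stated for sets of finite perimeter; since $E_\k,F_\k\subset\Om$ and $\Om$ is bounded, all sets involved have finite measure, and finiteness of perimeter is guaranteed for the minimizers (their perimeter is finite, being minimal) and hence, via submodularity, for the union and intersection as well. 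One could also cite the references already mentioned in the excerpt (\cite{CCN10, LS20}) for the analogous statement when $\k\ge h_\Om$, observing that the argument is verbatim the same once admissibility is verified.
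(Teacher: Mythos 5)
Your proof is correct and follows essentially the same route as the paper's: submodularity of the perimeter combined with additivity of the volume and the minimality of $E_\k$ and $F_\k$ forces all inequalities in the chain to be equalities, with the hypothesis $|E_\k\cap F_\k|\ge \pi\k^{-2}$ guaranteeing that both the union and the intersection are admissible competitors in $\mathcal{C}(\k)$. Your explicit admissibility bookkeeping is slightly more detailed than the paper's, but the argument is the same.
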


\begin{proof}
By minimality we have
\begin{align*}
P(E_\k)+P(F_\k) - 2\min \cF_\k &= \k(|E_\k| +|F_\k|) = \k|E_\k\cup F_\k|+ \k|E_\k\cap F_\k|\\
&\le P(E_\k \cup F_\k) + P(E_\k \cap F_\k) - 2\min \cF_\k\,.
\end{align*}
By the well-known inequality (see for instance~\cite[Lemma~12.22]{Mag12book})
\[
P(E_\k \cup F_\k) + P(E_\k \cap F_\k) \le P(E_\k) +P(F_\k)\,,
\]
and provided that $|E_\k\cap F_\k|\ge \pi \k^{-2}$, we obtain
\begin{align*}
P(E_\k)+P(F_\k) - 2\min \cF_\k & = \k|E_\k\cup F_\k|+ \k|E_\k\cap F_\k|\\
&\le P(E_\k \cup F_\k) + P(E_\k \cap F_\k)  - 2\min \cF_\k\\
&\le P(E_\k)+P(F_\k) - 2\min \cF_\k\,,
\end{align*}
therefore the two last inequalities must be equalities. But this can happen if and only if
\[
P(E_\k \cup F_\k) - \k|E_\k\cup F_\k| = P(E_\k \cap F_\k) - \k|E_\k\cap F_\k| = \min \cF_\k\,.\qedhere
\]
\end{proof}

We stress the necessity of requiring that the measure of the intersection is big enough, otherwise one can easily produce counterexamples, as shown in \cref{fig:not_minimizer}. The choice $\k=R_\Om^{-1}$ is not restrictive, as one can find counterexamples for general $\k\in[R_\Om^{-1}, h_\Om)$ by considering a suitable ``balanced dumbbell'', namely two identical squares linked by a very thin corridor, as in \cref{fig:balanced_dumbbell}. 

\begin{figure}[t]
\makebox[\linewidth][c]{
\begin{subfigure}[b]{.25\linewidth}
\begin{tikzpicture}[thick]
\clip (-1.1,-1) rectangle (2.1,1);
\draw[thin] (-1,-1) rectangle (2,1);
\fill[black!20!white] (1,0) circle (1cm);
\draw[dashed, thin](0,0) circle (1cm);
\draw[thin] (1,0) circle (1cm);
\end{tikzpicture}
\caption{A minimizer\label{fig:label1}}
\end{subfigure}
\quad
\begin{subfigure}[b]{.25\linewidth}
\begin{tikzpicture}[thick]
\clip (-1.1,-1) rectangle (2.1,1);
\draw[thin](-1,-1) rectangle (2,1);
\fill[black!20!white](0,0) circle (1cm);
\draw[dashed, thin](1,0) circle (1cm);
\draw[thin] (0,0) circle (1cm);
\end{tikzpicture}
\caption{A minimizer\label{fig:label2}}
\end{subfigure}
\quad
\begin{subfigure}[b]{.25\linewidth}
\begin{tikzpicture}[thick]
\clip (-1.1,-1) rectangle (2.1,1);
\draw[thin] (-1,-1) rectangle (2,1);
\draw[dashed, thin](0,0) circle (1cm);
\draw[dashed, thin](1,0) circle (1cm);
\clip(0,0) circle (1cm);
\fill[black!20!white] (1,0) circle (1cm);
\end{tikzpicture}
\caption{The intersection\label{fig:label3}}
\end{subfigure}
\quad
\begin{subfigure}[b]{.25\linewidth}
\begin{tikzpicture}[thick]
\clip (-1.1,-1) rectangle (2.1,1);
\draw[thin] (-1,-1) rectangle (2,1);
\fill[black!20!white] (1,0) circle (1cm);
\fill[black!20!white] (0,0) circle (1cm);
\draw[dashed, thin](0,0) circle (1cm);
\draw[dashed, thin](1,0) circle (1cm);
\end{tikzpicture}
\caption{The union\label{fig:label4}}
\end{subfigure}
}
\caption{On the left two minimizers for $\k=R_\Om^{-1}$; on the right their intersection and union which are not minimizers.\label{fig:not_minimizer}}
\end{figure}
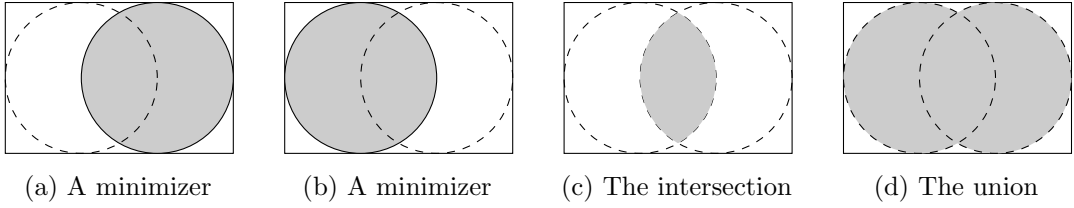

As the class of minimizers is not closed under (countable) unions or intersections when $R_\Om^{-1}\le \k<h_\Om$, one cannot directly define the maximal minimizer as the union of all minimizers, or the minimal minimizer as the intersection of all minimizers (for $\k> h_{\Om}$) as in ~\cite[Definition~2.2, Proposition~3.2 and Remark~3.3]{LS20} or~\cite[Definition~2.1, Lemma~2.2 and Lemma~2.5]{CCN10}. Nevertheless, we can give an alternative definition in terms of maximal/minimal volume.

\begin{defin}\label{def:min_max}
A minimal minimizer of $\mathcal{F}_\kappa$ is a set $E^{m}_\k$ belonging to
\[
\argmin \{\,|E_\kappa|\,:\, E_\kappa \text{ is a minimizer of $\mathcal{F}_\kappa$}\,\}\,.
\]
Similarly, a maximal minimizer is a set $E^M_\kappa$ belonging to
\[
\argmax \{\,|E_\kappa|\,:\, E_\kappa \text{ is a minimizer of $\mathcal{F}_\kappa$}\,\}\,.
\]
\end{defin}

\begin{prop}
There exist both minimal and maximal minimizers of $\mathcal{F}_\kappa$.
\end{prop}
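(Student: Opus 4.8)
The plan is to show that the set of volumes of minimizers is closed and bounded, so that the infimum and supremum in \cref{def:min_max} are attained. Boundedness is immediate: every competitor (hence every minimizer) is contained in $\Om$, so $0\le |E_\k|\le |\Om|$ for any minimizer $E_\k$; moreover, by \cref{prop:existence} the class of minimizers is nonempty, so the quantities $\inf\{|E_\k|\}$ and $\sup\{|E_\k|\}$ are well-defined real numbers. It remains to produce actual minimizers achieving these extremal volumes, and for this I would exploit the standard compactness and lower semicontinuity machinery already used in the proof of \cref{prop:existence}.

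First I would take a sequence $\{E_h\}_h$ of minimizers of $\mathcal{F}_\k$ in $\mathcal{C}(\k)$ with $|E_h|\to m:=\inf\{|E_\k| : E_\k \text{ minimizer}\}$. Since each $E_h$ is a minimizer, $P(E_h)=\mathcal{F}_\k[E_h]+\k|E_h| = \min\mathcal{F}_\k + \k|E_h|\le \min\mathcal{F}_\k + \k|\Om|$, so the perimeters are uniformly bounded; together with $E_h\subset\Om$ this gives a uniform $BV$ bound. By compactness in $L^1$, up to a subsequence $E_h\to E^m_\k$ in $L^1$, hence $|E^m_\k|=\lim_h|E_h|=m$. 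Lower semicontinuity of the perimeter gives $P(E^m_\k)\le\liminf_h P(E_h)$, and since $|E^m_\k|=\lim_h|E_h|$ one gets $\mathcal{F}_\k[E^m_\k]\le\liminf_h\mathcal{F}_\k[E_h]=\min\mathcal{F}_\k$, so $E^m_\k$ is a minimizer — provided it lies in $\mathcal{C}(\k)$. When $\k\ge h_\Om$ this is automatic since $\mathcal{C}(\k)$ consists of all Borel subsets of $\Om$; when $R_\Om^{-1}\le\k<h_\Om$, passing $|E_h|\ge\pi\k^{-2}$ to the limit gives $|E^m_\k|\ge\pi\k^{-2}$, so again $E^m_\k\in\mathcal{C}(\k)$. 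Thus $E^m_\k$ is a minimizer with $|E^m_\k|=m$, i.e.\ a minimal minimizer. The argument for the maximal minimizer is verbatim the same, replacing $\inf$ by $\sup$ and $m$ by $M:=\sup\{|E_\k|\}$; the constraint $|E|\ge\pi\k^{-2}$ is preserved in the limit just as before (indeed it only makes the limiting inequality easier).

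There is no real obstacle here: the only thing one must check is that the $L^1$-limit stays inside the competitor class $\mathcal{C}(\k)$, and the lower volume bound defining $\mathcal{C}(\k)$ is stable under $L^1$-convergence, so this is routine. (One could alternatively observe that the function $V\mapsto\mathcal{J}(V)$ restricted to the set of volumes realized by minimizers of $\mathcal{F}_\k$ is, by the very definition of minimizer, constrained to a level set of a lower semicontinuous functional, but the direct compactness argument above is the cleanest.) Hence both a minimal and a maximal minimizer exist, which is the assertion.
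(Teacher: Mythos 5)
Your argument is correct and is essentially the paper's own proof: the paper takes an extremizing sequence of minimizers, extracts an $L^1$-convergent subsequence, and concludes as in the existence proposition that the limit is a minimizer attaining the extremal volume. You have simply written out in full the compactness, lower semicontinuity, and class-membership details that the paper leaves implicit.
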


\begin{proof}
Take any extremizing sequence of minimizers. Up to subsequences, it converges to a limit set $E$, such that its volume is the infimum (or the supremum) of the volumes. As in the proof of \cref{prop:existence}, one sees that $E$ is a minimizer.
\end{proof}

When dealing with $R_\Om^{-1}\le \k<h_\Om$, we remark that in contrast with the case $\k\ge h_\Om$ one might have multiple maximal minimizers, and in contrast with $\k>h_\Om$ multiple minimal minimizers. Consider for instance a balanced dumbbell, depicted in \cref{fig:balanced_dumbbell}. If the handle is thin enough, for all $\k\in [R_\Om^{-1}, h_\Om)$, there are exactly two minimizers, corresponding to the two components shaded in gray in the figure. Both are at the same time maximal and minimal minimizers, while their union and intersection are not minimizers (indeed, the value of $\mathcal{F}_\kappa$ on the union is twice the \emph{positive} infimum of $\mathcal{F}_\kappa$, while the intersection is empty and hence not in ${\mathcal C}_{\k}$).

\begin{figure}
\begin{tikzpicture}[thick, scale=0.17]
\fill[black!20!white] (14,9) circle (5cm);
\fill[black!20!white] (5,5) circle (5cm);
\fill[black!20!white] (5,13) circle (5cm);
\fill[black!20!white] (13,13) circle (5cm);
\fill[black!20!white] (13,5) circle (5cm);
\fill[black!20!white] (0,5)--(0,13)--(5,18)--(13,18)--(18,13)--(18,5)--(13,0)--(5,0)--(0,5);
\begin{scope}[xscale=-1, xshift=-42cm]
\fill[black!20!white] (14,9) circle (5cm);
\fill[black!20!white] (5,5) circle (5cm);
\fill[black!20!white] (5,13) circle (5cm);
\fill[black!20!white] (13,13) circle (5cm);
\fill[black!20!white] (13,5) circle (5cm);
\fill[black!20!white] (0,5)--(0,13)--(5,18)--(13,18)--(18,13)--(18,5)--(13,0)--(5,0)--(0,5);
\end{scope}
\draw[thin] (0,0)--(0,18)--(18,18)--(18,12)--(24,12)--(24,18)--(42,18)--(42,0)--(24,0)--(24,6)--(18,6)--(18,0)--(0,0);
\end{tikzpicture}
\caption{For values $\k\in[R_\Om^{-1}, h_\Om)$, a balanced dumbbell has two minimizers. Both are maximal and minimal at the same time.\label{fig:balanced_dumbbell}}
\end{figure}
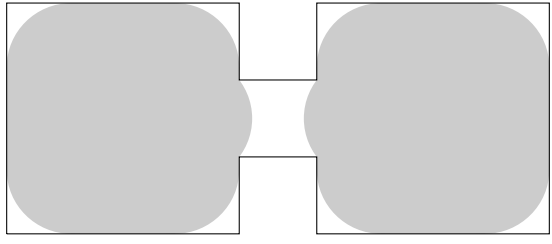

Finally, we provide an extended version of the \emph{rolling ball lemma}~\cite[Lemma~2.12]{LP16}, which also extends~\cite[Lemma~1.7]{LNS17}. This lemma, already known in the case $\k = h_{\Om}$, holds for a general $\k$ and its proof is an easy adaptation of the original one. Before stating the lemma, we introduce some needed terminology.  Given two balls $B_{r}(x_{0}),B_{r}(x_{1})\subset \Om$, with same radius but possibly different centers, we say that $B_{r}(x_{0})$ can be rolled onto $B_{r}(x_{1})$ if there exists a continuous curve $\g\colon[0,1]\to \Omega$ with $\g(0)=x_0$, $\g(1)=x_1$ and such that $B_r(\g(t))\subset \Om$ for all $t\in[0,1]$ (such $\g$ will be called a \textit{rolling curve}).

\begin{lem}[Rolling ball - extended version]\label{lem:rollingball}
Let $\k = r^{-1}\ge R_\Om^{-1}$ be fixed, and let $E_{\k}$ be a minimizer of $\cF_\k$. Then the following properties hold:
\begin{itemize}
\item[(i)] if $E_\k$ contains a ball $B_r(x_0)$, then it contains any ball $B_{r}(x_{1})$ with $x_{1}\in \interior(\Omega^{r})$, and such that $B_r(x_0)$ can be rolled onto $B_r(x_{1})$;

\item[(ii)] if $E_{\k}$ contains two balls $B_r(x_0)$ and $B_r(x_1)$ that can be rolled onto each other, then it contains all balls of radius $r$ centered on the points of the rolling curve;

\item[(iii)] if $E_{\k}$ is a maximal minimizer of $\cF_\k$ such that $B_r(x_0)\subset E_{\k}$ for some $x_{0}$, then $E_{\k}$ contains any other ball $B_r(x_1)$, which $B_r(x_0)$ can be rolled onto. 
\end{itemize}
\end{lem}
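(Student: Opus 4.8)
The plan is to follow the proofs of~\cite[Lemma~2.12]{LP16} and~\cite[Lemma~1.7]{LNS17}, which were written for $\k=h_\Om$, checking that the threshold $\k=h_\Om$ entered those arguments only through the admissibility of the auxiliary competitors, which for $\k\ge R_\Om^{-1}$ is now guaranteed by membership in $\mathcal{C}(\k)$ together with \cref{prop:properties}. All three items reduce to the one-step statement: \emph{if $E_\k$ minimizes $\cF_\k$ in $\mathcal{C}(\k)$ and $\g\colon[0,1]\to\Om$ is a rolling curve with $B_r(\g(0))\subset E_\k$, then, under the extra hypothesis of the relevant item, $B_r(\g(1))\subset E_\k$.} I would argue by continuation in the parameter: fixing the closed representative of $E_\k$ and using that $t\mapsto B_r(\g(t))$ is continuous for the Hausdorff distance, set $t^{*}:=\sup\{t\in[0,1]:B_r(\g(s))\subset E_\k\ \text{for all}\ s\le t\}$; then $B_r(\g(t^{*}))\subset E_\k$, and it remains to rule out $t^{*}<1$ (in case~(ii), to rule out any gap in $\{t:B_r(\g(t))\subset E_\k\}$, the continuation being anchored at the two endpoints, both of which lie in $E_\k$ by hypothesis).

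The heart of the argument is to build, out of $E_\k$ and $\g$, a competitor that is again a minimizer. The natural candidate is $\widetilde E:=E_\k\cup\bigl(\g([0,1])\oplus B_r\bigr)$, namely $E_\k$ enlarged by the whole tube swept by the rolling ball; it belongs to $\mathcal{C}(\k)$ since $\widetilde E\subset\Om$ and $|\widetilde E|\ge|E_\k|\ge\pi\k^{-2}$. I would prove $\cF_\k[\widetilde E]\le\cF_\k[E_\k]$, whence by minimality equality holds and $\widetilde E$ is itself a minimizer. The key point is that adjoining to a set $F$ a ball $B_r(y')$ that overlaps heavily a ball $B_r(y)\subset F$ is almost free: combining $\k r=1$ with the submodularity inequality $P(F\cup B)+P(F\cap B)\le P(F)+P(B)$ and an elementary (if slightly delicate) computation with two discs of radius $r$ whose centres are at distance $d$, one obtains $\cF_\k[F\cup B_r(y')]-\cF_\k[F]\le c\,d^{3}/r^{2}$ for an absolute constant $c$. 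Subdividing $\g$ with mesh $d$, adjoining the balls $B_r(\g(t_i))$ one after another, and letting $d\to0$, the accumulated excess is $O(d^{2})\to0$; as the resulting sets converge in $L^{1}$ to $\widetilde E$ and $\cF_\k$ is lower semicontinuous, we conclude $\cF_\k[\widetilde E]\le\cF_\k[E_\k]$. I expect this cubic estimate, together with the bookkeeping of the telescoping sum, to be the main obstacle in a careful write-up: the crude submodular bound alone yields nothing, and one genuinely has to exploit the equal-radius geometry.

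Granting that $\widetilde E$ is a minimizer, each item follows. For~(iii): if $E_\k$ is maximal then $|\widetilde E|\le|E_\k|$, forcing $\widetilde E=E_\k$ up to a null set, hence $\g([0,1])\oplus B_r\subset E_\k$ and in particular $B_r(\g(1))\subset E_\k$. For~(i): if $B_r(x_1)\not\subset E_\k$ then $\widetilde E\supsetneq E_\k$; using $x_1\in\interior(\Om^r)$, i.e.\ $\dist(x_1,\de\Om)>r$ and so $\overline{B_r(x_1)}\subset\Om$, a contradiction is reached by comparing the ordered curvature-$\k$ boundaries $\de E_\k$ and $\de\widetilde E$. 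Indeed, by \cref{prop:properties} both $\de E_\k\cap\Om$ and $\de\widetilde E\cap\Om$ are analytic, made of circular arcs of curvature $\k$ of length at most $\pi\k^{-1}=\pi r$ with endpoints on $\de\Om$; since the region where $\widetilde E$ strictly contains $E_\k$ meets the interior ball $B_r(x_1)\subset\Om$, and since two circles of radius $r$ meeting tangentially with their discs on the same side must coincide, analytic continuation would force one of these arcs to close up into a full circle of length $2\pi r>\pi r$ --- impossible. Hence $B_r(x_1)\subset E_\k$. Item~(ii) is proved in the same way, applied along each putative gap of $\{t:B_r(\g(t))\subset E_\k\}$: the associated sub-tube has both end-caps in $E_\k$, and a new boundary arc of $\de\widetilde E$ produced by it would again be a curvature-$\k$ circular arc that cannot be joined analytically to $\de E_\k$ without completing a full radius-$r$ circle, contradicting \cref{prop:properties}; so no gap exists.
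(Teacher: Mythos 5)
Your overall strategy --- continuation along the rolling curve, promotion of the tube $E_\k\cup(\g([0,1])\oplus B_r)$ to a minimizer, and a contradiction drawn from the regularity of minimizers (\cref{prop:properties}) --- is the same as the paper's, which in turn defers the one-step rolling argument to \cite[Lemma~2.12]{LP16}. Two points in your write-up, however, are genuine gaps rather than routine details. First, the telescoping estimate: after submodularity the quantity to control is $\cF_\k[B_r(y')]-\cF_\k[F\cap B_r(y')]$, and $F\cap B_r(y')$ is \emph{not} the lens of two discs --- it is an arbitrary set squeezed between the lens $L$ and the full disc, so the ``elementary two-disc computation'' does not apply to it. The isoperimetric inequality does give $\cF_\k[G]\ge 2\sqrt{\pi |G|}-\k|G|\ge \pi r - d^{2}/(\pi r)+O(d^{3})$ for any $G$ with $L\subseteq G\subseteq B_r(y')$ (using $|G|\ge |L|=\pi r^2-2rd+O(d^3)$ and concavity), and an $O(d^{2})$ per-step bound still suffices --- but only if the rolling curve is rectifiable, since the number of steps is then of order $\ell/d$. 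The lemma assumes only a \emph{continuous} rolling curve, so you must either justify rectifiability or revert to the local continuation argument of \cite{LP16}, in which a single nearby ball is adjoined at a time and the competitor is an almost-minimizer rather than a minimizer.

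Second, and more seriously, your contradiction for item~(ii) does not close. In item~(i) the hypothesis $x_1\in\interior(\Om^r)$ forces $\de B_r(x_1)$ to be compactly contained in $\Om$, so the newly exposed arc can neither reach $\de\Om$ nor close up; this is exactly the paper's argument. In item~(ii) there is no such hypothesis: the balls along the gap may be tangent to $\de\Om$, the lateral boundary of the gap sub-tube may lie entirely on $\de\Om$, and then filling the gap creates \emph{no} new component of $\de\widetilde E\cap\Om$ at all --- your ``analytic continuation forces a full circle'' argument has nothing to bite on, since any new arc is free to terminate on $\de\Om$. This is precisely the configuration of two facing half-circles $\alpha^{+}(t^{*})$, $\alpha^{-}(t_*)$ meeting $\de\Om$ tangentially (the cuspidal singularity of \cref{fig:cuspidal}), which the paper excludes, together with the transversal ``rounded X'' case of \cref{fig:x_rounded}, by a separate variational step: cutting the singularity yields a competitor with strictly smaller perimeter and larger volume, contradicting minimality. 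Some argument of this kind is unavoidable in~(ii) and is missing from your proof. (Item~(iii) via maximality of the volume is fine once the tube step is repaired, and matches \cite{LP16,LNS17}.)
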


\begin{proof}
Point~(iii) was first proved in~\cite[Lemma~2.12]{LP16} and later refined in~\cite[Lemma~1.7]{LNS17}. 

As for point~(i), one can argue by contradiction assuming that $B_{r}(x_{1})$ is not contained in $E_{\k}$. Then, fix $\g \colon[0,1]\to \Omega$ with $\g(0)=x_0$, $\g(1)=x_1$ and such that $B_r(\g(t))\subset \Om$ for all $t\in[0,1]$, and assume that $\g$ is parametrized by a multiple of the arc-length. Denote by $\alpha^{+}(t)$ the half-circle made by the points of the form $\g(t) + r \nu$, with $\nu$ such that $|\nu|=1$ and $\nu \cdot \dot{\g}(t)>0$. Denote as $t^{*}$ the supremum of $t\in [0,1]$ such that $B_r(\g(s))\subset E_{\k}$ for all $s\in [0,t]$. Clearly we have $t^{*}<1$. Arguing as in \cite[Lemma~2.12]{LP16}, we infer that $\alpha^{+}(t^{*})$ coincides with a connected component of $\de E_{\k}\cap \Om$, and the set $\widetilde{E}_{\k} = E_{\k}\cup \bigcup_{s\in [t^{*},1]} B_r(\g(s))$ turns out to be a minimizer as well. Thus by \cref{prop:properties}~(i) and~(ii), a connected component of $\de \widetilde{E}_{\k}\cap \Om$ not larger than a half-circle should be contained in $\de B_{r}(x_{1})$, which is a compact subset of $\Om$, hence its endpoints should belong to $\Om\cap \de\Om = \emptyset$, a contradiction. 

The proof of point (ii) is achieved through a similar argument to the one used for proving point (i). Let $t^{*}$ denote the supremum of $t\in [0,1]$ such that $B_r(\g(s))\subset E_{\k}$ for all $s\in [0,t]$, and similarly let $t_*$ be the infimum of $t\in [0,1]$ such that $B_r(\g(s))\subset E_{\k}$ for all $s\in [t,1]$. Assume by contradiction that $t^{*}<t_*$. Denoting by $\alpha^{-}(t)$ the half-circle whose points are of the form $\g(t) + r \nu$, with $\nu$ such that $|\nu|=1$ and $\nu \cdot \dot{\g}(t)<0$, we have that both $\alpha^{+}(t^{*})$ and $\alpha^{-}(t_*)$ coincide with two distinct connected components of $\de E_{\k}\cap \Om$. Finally, by rolling $B_{r}(\g(t^{*}))$ towards $B_{r}(\g(t_*))$ we could construct a minimizer that would exhibit either a non-admissible singularity of cuspidal type on $\de\Om$, see \cref{fig:cuspidal}, or a non-admissible singularity of ``rounded X'' type for $\de E\cap \Om$, see \cref{fig:x_rounded}. Both cannot happen, as ``cutting the singularity'' would produce a competitor with smaller perimeter and greater volume.
\begin{figure}[t]
\makebox[\linewidth][c]{
\begin{subfigure}[b]{.35\linewidth}
\centering
\begin{tikzpicture}[thick]
\begin{scope}
    \clip (0,-0.1) rectangle (2,2);
    \fill[black!20!white]  (1,0) circle(1);
    \draw[dashed]  (1,0) circle(1);
\end{scope}
\begin{scope}
    \clip (-2,2) rectangle (0,-0.1);
    \fill[black!20!white]  (-1,0) circle(1);
    \draw[dashed]  (-1,0) circle(1);
\end{scope}
\begin{scope}
    \clip (-2,0) rectangle (2,2.5);
    \draw[black]  (0,0) circle(2);
\end{scope}
\begin{scope}
    \clip (-2,0) rectangle (0,-2);
    \filldraw[black,fill=black!20!white]  (-2,0) circle(2);
\end{scope}
\begin{scope}
    \clip (0,-2) rectangle (2,0);
    \filldraw[black,fill=black!20!white]  (2,0) circle(2);
\end{scope}
\end{tikzpicture}
\caption{A cuspidal type singularity for $\de E$ on $\de \Om$\label{fig:cuspidal}}
\end{subfigure}
\qquad
\begin{subfigure}[b]{.35\linewidth}
\centering
\begin{tikzpicture}[thick]
\begin{scope}[rotate=90]
%
\fill[black!20!white] (-2, -1) rectangle (-1, 1);
\fill[black!20!white] (1, -1) rectangle (2, 1);
\begin{scope}
    \clip (-1.1,-1) rectangle (1.5,1.5);
    \fill[black!20!white]  (-1,0) circle(1);
    \draw[dashed] (-1,0) circle(1);
\end{scope}
\begin{scope}
    \clip (1.1,-1.5) rectangle (0,1);
    \fill[black!20!white]  (1,0) circle(1);
    \draw[dashed]  (1,0) circle(1);
\end{scope}
\draw (-2,-1) -- (2,-1);
\draw (-2,-1) -- (2,-1);
\draw (-2,1) -- (2,1);
\end{scope}
\end{tikzpicture}
\caption{A rounded X-type singularity for $\de E \cap \Om$\label{fig:x_rounded}}
\end{subfigure}
}
\caption{The singularities occurring in \cref{lem:rollingball}. The boundary of $\Om$ is represented by the thick continuous lines.\label{fig:singularities}}
\end{figure}
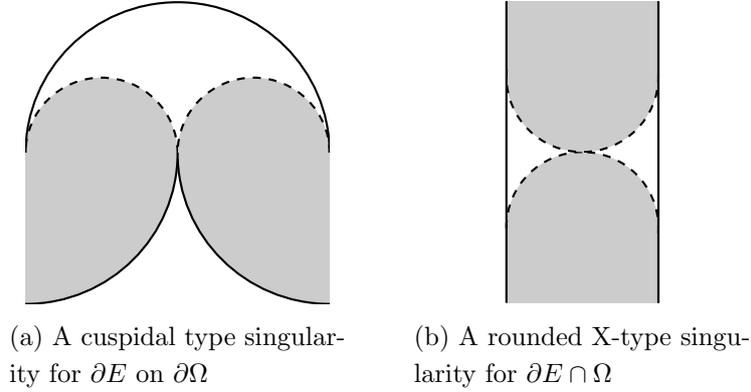
\end{proof}

\subsection{Additional properties when \texorpdfstring{$\Om$}{Om} is a Jordan domain}\label{ssec:jordan_domains}

When the set $\Om$ is a Jordan domain, satisfying the technical assumption $|\de\Om|=0$, the minimizers enjoy some additional properties, which we recall here. These were originally proved in~\cite{LNS17} for the case $\k=h_\Om$, and their proofs are easily adapted to a general $\k$, hence we shall omit the details here.

\begin{prop}\label{prop:palla_interna}
Suppose $\Om\subset \R^2$ is a Jordan domain with $|\de \Om| = 0$, and let $E_\k$ be a minimizer of $\cF_\k$. Then,
\begin{itemize}
\item[(i)] the curvature of $\de E_\k$ is bounded from above by $\k$ in both variational and viscous senses;
\item[(ii)] $E_\k$ is Lebesgue-equivalent to a finite union of simply connected open sets, hence its measure-theoretic boundary $\de E_\k$ is a finite union of pairwise disjoint Jordan curves;
\item[(iii)] $E_\k$ contains a ball of radius $\k^{-1}$.
\end{itemize}
\end{prop}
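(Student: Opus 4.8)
The three properties are, for a general $\k\ge R_\Om^{-1}$, the exact analogues of results established in~\cite{LNS17} in the Cheeger case $\k=h_\Om$, so the plan is to transfer those arguments and check that the only ingredients they rely on are now at our disposal: the regularity of $\de E_\k\cap\Om$ and the matching of outer normals on $\de^*\Om$ (\cref{prop:properties}), the rolling--ball \cref{lem:rollingball}, and a \emph{one--sided minimality} of $E_\k$, namely that $\mathcal{F}_\k[E_\k]$ cannot be strictly lowered by enlarging $E_\k$ inside $\Om$ nor by discarding one of its connected components, provided the resulting set still lies in $\mathcal{C}(\k)$. That proviso is the only novelty with respect to~\cite{LNS17}: it is automatic when $\k>h_\Om$, while for $R_\Om^{-1}\le\k\le h_\Om$ it just amounts to keeping the volume $\ge\pi\k^{-2}$; and in the borderline case $|E_\k|=\pi\k^{-2}$ there is nothing to prove, since $E_\k$ is then a disk of radius $\k^{-1}$, exactly as in the discussion following \cref{prop:properties}.

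\emph{Part (i).} The variational bound is immediate from one--sided minimality: testing $\mathcal{F}_\k$ against outward deformations of $E_\k$ supported away from $\de\Om$, or otherwise compatible with the obstacle, gives $\delta P(E_\k)\ge\k\,\delta|E_\k|$ along such fields, which is precisely the weak formulation of ``the curvature of $\de E_\k$ is at most $\k$''. On $\de E_\k\cap\Om$ this sharpens to the equality in \cref{prop:properties}(i); on $\de E_\k\cap\de\Om$ the obstacle can only bend the boundary towards the interior of $E_\k$, so the one--sided inequality survives — \cref{prop:properties}(iii) being what makes the normal meaningful there. The viscous version follows from the same one--sided minimality by comparison with spherical caps: a ball of radius smaller than $\k^{-1}$ witnessing a larger curvature at a boundary point could be ``rounded off'', simultaneously decreasing the perimeter and increasing the volume, which contradicts minimality. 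This is verbatim the argument of~\cite{LNS17}.

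\emph{Part (ii).} Finiteness of the number of connected components of $E_\k$ rests on a uniform positive lower bound for their volumes. Since $P$ and $|\cdot|$ are additive over components, $\mathcal{F}_\k[E_\k]=\sum_i\mathcal{F}_\k[C_i]$, hence no $C_i$ with $\mathcal{F}_\k[C_i]>0$ can be such that $E_\k\setminus C_i$ still belongs to $\mathcal{C}(\k)$, for then $E_\k\setminus C_i$ would strictly improve $\mathcal{F}_\k$; combining this with the isoperimetric inequality and, when $\k\le h_\Om$, with the volume constraint defining $\mathcal{C}(\k)$, one gets $|C_i|\ge c(\Om,\k)>0$ for every component, so there are at most $|\Om|/c(\Om,\k)<\infty$ of them. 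Simple connectedness of each $C_i$ follows by filling holes: a bounded component $H$ of $\R^2\setminus C_i$ is contained in $\Om$ — because $\R^2\setminus\Om$ is connected and unbounded, hence contained in the unbounded component of $\R^2\setminus C_i$ — so $C_i\cup H$ is still a subset of $\Om$, has strictly smaller perimeter and strictly larger volume than $C_i$, and replacing $C_i$ by $C_i\cup H$ therefore strictly lowers $\mathcal{F}_\k$ while staying in $\mathcal{C}(\k)$, a contradiction. Thus $E_\k$ is Lebesgue--equivalent to a finite disjoint union of simply connected open sets and $\de E_\k$ is a finite union of pairwise disjoint Jordan curves.

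\emph{Part (iii), and the main difficulty.} The inradius of $E_\k$ is estimated through its inner parallel sets. With $d(x)=\dist(x,\R^2\setminus E_\k)$ and $\rho=\inr(E_\k)=\sup_{E_\k}d$, the coarea formula gives $|E_\k|=\int_0^{\rho}\mathcal{H}^1(\{d=t\})\,dt$, and the curvature bound from~(i) guarantees that the inner parallel sets do not gain perimeter on $[0,\rho)$ (the total turning of each of their boundary Jordan curves equals $2\pi$, so offsetting inwards can only shorten it), whence $\mathcal{H}^1(\{d=t\})\le P(E_\k)$ and $|E_\k|\le\rho\,P(E_\k)$. If $\mathcal{F}_\k[E_\k]\le0$, that is $P(E_\k)\le\k|E_\k|$ — which holds whenever $\k\ge h_\Om$, since then $\min_{\mathcal{C}(\k)}\mathcal{F}_\k\le0$ — we conclude $1\le\k\rho$, i.e.\ $\rho\ge\k^{-1}$, so $E_\k$ contains a ball of radius $\k^{-1}$. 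When $R_\Om^{-1}\le\k<h_\Om$, however, one has $\mathcal{F}_\k[E_\k]>0$ and $E_\k$ need not be a disk, so the coarea estimate alone is inconclusive; here one argues from the local structure: by~(i)--(ii) each free arc of $\de E_\k\cap\Om$ is a sub-arc of a circle of radius $\k^{-1}$ whose osculating disk $B_{\k^{-1}}(c)$ lies locally inside $E_\k$, and, tracing that circle and exploiting both the absence of cuspidal and ``rounded--X'' singularities (\cref{lem:rollingball}) and the connectedness of $\R^2\setminus\Om$, one shows $B_{\k^{-1}}(c)\subset E_\k$; if instead $\de E_\k\cap\Om=\emptyset$ then $E_\k=\Om$ up to null sets and the claim is trivial. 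I expect this last step — upgrading ``$E_\k$ looks locally like a disk of radius $\k^{-1}$ along its free boundary'' to the genuine inclusion of a full such disk, in the sub-Cheeger regime — to be the only delicate point; it is exactly where the Jordan--domain topology, the singularity exclusions of \cref{lem:rollingball}, and the rolling--ball lemma itself enter. Once a single disk of radius $\k^{-1}$ is known to lie inside $E_\k$, the proof is complete.
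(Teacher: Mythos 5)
Your parts (i) and (ii) follow, up to one sign slip, essentially the same route as the paper (which itself only points to the corresponding arguments of \cite{LNS17}). For (i), note that the \emph{upper} bound on the curvature is the one encoded by \emph{inner} variations, $P(E_\k)\le P(F)+\k|E_\k\setminus F|$ for $F\subset E_\k$; outward admissible deformations give the \emph{lower} bound $H\ge\k$, so your statement that outward fields yield ``curvature at most $\k$'' has the direction reversed. The reason the borderline case $|E_\k|=\pi\k^{-2}$ matters — and you do correctly dispose of it by observing that $E_\k$ is then a disk — is precisely that inner variations are there forbidden by the volume constraint. The paper phrases the generic case via $(\Lambda,r_0)$-minimality with $r_0=r_0(|E_\k|)$, which is the rigorous substitute for your first-variation computation when no a priori regularity is available.

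The genuine gap is in part (iii), and you have flagged it yourself. The paper's proof of (iii) is not a minimality argument at all: it applies \cite[Theorem~1.6]{LNS17}, a self-contained geometric statement to the effect that any open, bounded, simply connected planar set whose boundary curvature is bounded above by $\k$ in the viscous sense has inradius at least $\k^{-1}$; combined with (i) and (ii) this yields (iii) in one stroke, uniformly in $\k$. Your coarea/inner-parallel-set estimate $|E_\k|\le\rho\,P(E_\k)$ closes the argument only when $P(E_\k)\le\k|E_\k|$, i.e.\ when $\k\ge h_\Om$, exactly as you observe; in the regime $R_\Om^{-1}\le\k<h_\Om$ — which is the whole point of this paper — one has $\min\mathcal{F}_\k>0$ and the estimate is vacuous. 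The replacement you sketch (``trace the osculating circle of a free arc and upgrade the local inclusion to a global one'') is not carried out, and as stated it cannot lean on \cref{lem:rollingball}: every item of that lemma takes as hypothesis that $E_\k$ \emph{already} contains a ball of radius $\k^{-1}$, which is exactly the conclusion you are after, so invoking it here is circular. What is missing is precisely the content of \cite[Theorem~1.6]{LNS17} (or an equivalent inradius bound for Jordan domains with a one-sided viscous curvature bound), and without it part (iii) — hence everything downstream of it, starting with \cref{cor:useful_cor} — is not established in the sub-Cheeger regime.
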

For the definitions of curvature in variational and in viscous senses we refer to, resp.,~\cite{BGM03} and~\cite[Definition~2.3]{LNS17}. The proof of~(i), for $\kappa\ge h_\Om$ is obtained as in~\cite[Lemma~2.2 and Lemma~2.4]{LNS17}. When $\kappa \in [R_\Om^{-1}, h_\Om)$ and $|E_\kappa|>\pi \kappa^{-2}$, one can analogously show that $E_\kappa$ is a $(\Lambda, r_0)$-minimizer of the perimeter (see \cite{Mag12book}) with $r_0 = r_0(|E_\kappa|)>0$, and the same reasoning applies. Whenever the minimizer is such that $|E_\kappa|=\pi \kappa^{-2}$, as we have already discussed immediately after \cref{prop:properties}, the minimizer needs to be a ball, and thus the claim is trivial. The proof of~(ii) follows from the same argument of~\cite[Propositions~2.9 and~2.10]{LNS17}. Point~(iii) follows from~(i) and~(ii) paired with~\cite[Theorem~1.6]{LNS17}.

\begin{rem}
We observe that \cref{prop:palla_interna} implies that every minimizer, for $\kappa\in [R_\Om^{-1}, h_\Om)$, has a unique P-connected component, that is the analog of connected component in the theory of sets of finite perimeter (see~\cite{ACMM01}). Indeed, the bound on the variational curvature stated in~(i) holds on every P-connected component. By~(ii) each of these components is a simply connected open set, whose boundary is a Jordan curve. Then,~(iii) holds for each component, and hence each component has volume at least $\pi \k^{-2}$ and thus it belongs to $\mathcal{C}(\k)$. Assume now by contradiction that $E_\kappa$ has more than one P-connected component, say without loss of generality $E^1_{\k}$ and $E^2_{\k}$, which, by the above discussion, are competitors for \eqref{eq:pmc_mod}. Recall that in the regime $\kappa\in [R_\Om^{-1}, h_\Om)$ one has $\min \mathcal{F}_\kappa>0$, and thus 
\[
\mathcal{F}_\kappa[E^i_{\k}] > 0\,, \qquad i=1,2\,.
\]
The contradiction now is reached, since $\mathcal{F}_\kappa[E_{\k}] = \mathcal{F}_\kappa[E^1_{\k}]+\mathcal{F}_\kappa[E^2_{\k}]$ and removing a component produces a competitor with a strictly smaller energy.
\end{rem}

\subsection{Characterization of the minimizers of \texorpdfstring{$\mathcal{F}_\kappa$}{Fk}}\label{ssec:proof_shape}

This subsection is devoted to the proof of \cref{thm:main_shape_min}. At the end, we will obtain as a corollary the monotonicity (in the set inclusion sense) of minimizers of $\mathcal{F}_\kappa$ with respect to $\k$.

First, we shall see that assuming that $\Omega$ has no necks of radius $r=\k^{-1}$, and combining \cref{prop:palla_interna}~(iii) and \cref{lem:rollingball} yield the following result.

\begin{cor}\label{cor:useful_cor}
Let $\Om$ be a Jordan domain with $|\de \Om|=0$ and let $\k > R_\Om^{-1}$ be fixed. Assume $\Om$ has no necks of radius $r=\k^{-1}$. Let $E_{\k}$ be a minimizer of ${\mathcal F}_{\k}$. Then,
\begin{itemize}
\item[(i)] $E_\k$ contains $C_0\oplus B_r$, where $C_0$ is defined as in \cref{prop:struttura_diff}, namely
\[
C_0 = \overline{\interior(\Om^{r})} \cup  \bigcup_{\gamma\in \Gamma^2_r} \gamma([0,1]) \,;
\]
\item[(ii)] there exists a unique maximal minimizer, $E^M_\k$;
\item[(iii)]$E^M_\k$ coincides with the union of all minimizers.
\end{itemize}
Points~(ii) and~(iii) hold the same for $\k=R_\Om^{-1}$.
\end{cor}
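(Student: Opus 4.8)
The plan is to prove the three points in order, bootstrapping each from the previous one together with the rolling ball lemma (\cref{lem:rollingball}) and the internal ball property (\cref{prop:palla_interna}~(iii)).

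For point~(i), I would start from \cref{prop:palla_interna}~(iii), which gives a ball $B_r(x_0)\subset E_\k$. The no-neck assumption for radius $r$ says that any two balls of radius $r$ contained in $\Om$ can be joined by a rolling curve. I would first show that $x_0$ can be taken inside $\interior(\Om^r)$ (or rather that $E_\k$ contains a ball centered in $\interior(\Om^r)$): by \cref{lem:rollingball}~(i), rolling $B_r(x_0)$ to any ball centered at a point of $\interior(\Om^r)$ keeps that ball inside $E_\k$. Then, given any $x\in\interior(\Om^r)$, pick a rolling curve from $x_0$ to $x$ (no necks), and apply \cref{lem:rollingball}~(i) again to conclude $B_r(x)\subset E_\k$; taking the union over all such $x$ and closing up gives $\overline{\interior(\Om^r)}\oplus B_r\subset E_\k$. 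For the curves $\g\in\G^2_r$ (the ``handles''), I note that by \cref{prop:struttura_diff}~(iii) both endpoints $\g(0),\g(1)$ lie in $\overline{\interior(\Om^r)}$, so $B_r(\g(0))$ and $B_r(\g(1))$ are both in $E_\k$; since $\g$ itself is a rolling curve (all $B_r(\g(t))\subset\Om^r\subset\Om$), \cref{lem:rollingball}~(ii) forces $B_r(\g(t))\subset E_\k$ for all $t$, hence $\g([0,1])\oplus B_r\subset E_\k$. Combining, $C_0\oplus B_r\subset E_\k$.

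For point~(ii), existence of a maximal minimizer is already known (the proposition right after \cref{def:min_max}), so the content is \emph{uniqueness}. Suppose $E^M_\k$ and $F^M_\k$ are both maximal minimizers. By point~(i) each contains $C_0\oplus B_r$, which has positive volume; in fact each contains a ball of radius $r$, so $|E^M_\k\cap F^M_\k|\ge |C_0\oplus B_r|\ge \pi\k^{-2}$ (using that $C_0\supseteq\overline{\interior(\Om^r)}$ contains a point, so $C_0\oplus B_r$ contains a ball of radius $r$). Now \cref{prop:closed_class} applies (for $\k<h_\Om$; for $\k\ge h_\Om$ it is the classical fact cited before \cref{prop:closed_class}), so $E^M_\k\cup F^M_\k$ is a minimizer. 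Since its volume is at least that of each, and they are maximal, we get $|E^M_\k\cup F^M_\k|=|E^M_\k|=|F^M_\k|$, forcing $E^M_\k=F^M_\k$ up to null sets. The same argument works verbatim for $\k=R_\Om^{-1}$, since \cref{prop:closed_class} is stated for $\k\in[R_\Om^{-1},h_\Om)$ — this is exactly why the last sentence of the corollary holds for $\k=R_\Om^{-1}$ too.

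For point~(iii), let $\widehat E$ be the (essential) union of all minimizers; I must show $\widehat E = E^M_\k$. One inclusion is trivial once $\widehat E$ is a minimizer: it would then have maximal volume. To see $\widehat E$ is a minimizer, take a countable subfamily of minimizers whose union is $\widehat E$ up to null sets, form the increasing sequence of finite unions $G_1\subseteq G_2\subseteq\cdots$; by point~(i) each $G_j$ contains $C_0\oplus B_r$ so intersections of the pieces always have volume $\ge\pi\k^{-2}$, hence by \cref{prop:closed_class} (iterated) each $G_j$ is a minimizer, and $G_j\to\widehat E$ in $L^1$; lower semicontinuity of the perimeter plus convergence of the volume term (dominated convergence, $\Om$ bounded) show $\mathcal F_\k[\widehat E]\le\lim\mathcal F_\k[G_j]=\min\mathcal F_\k$, and $\widehat E\in\mathcal C(\k)$ since $|\widehat E|\ge\pi\k^{-2}$; so $\widehat E$ is a minimizer. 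Then $\widehat E\subseteq E^M_\k$ up to null sets because $E^M_\k$, being maximal, must contain every minimizer up to null sets — and conversely $E^M_\k$ is one of the minimizers in the union, so $E^M_\k\subseteq\widehat E$; equality follows.

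The main obstacle I anticipate is the bookkeeping in point~(i): carefully checking that rolling curves stay inside $\Om$ (not just $\Om^r$) and that \cref{lem:rollingball}~(i) is applicable (it requires the target center to lie in $\interior(\Om^r)$, which is fine for interior points but needs a limiting argument to reach $\overline{\interior(\Om^r)}$ and the curves in $\G^2_r$), plus handling the degenerate case $\interior(\Om^r)=\emptyset$ — but that case is excluded here since $\k>R_\Om^{-1}$ does not by itself force $\interior(\Om^r)\neq\emptyset$, so one may need to invoke \cref{prop:struttura_diff}~(a) separately, or simply note that $C_0$ is then one of the low-dimensional objects and the argument via \cref{lem:rollingball}~(ii) along those curves still goes through. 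Everything else is a routine application of the already-established lemmas.
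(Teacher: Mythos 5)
Your arguments for (i), (ii) and (iii) in the regime $\k>R_\Om^{-1}$ are correct and follow essentially the same route as the paper: \cref{prop:palla_interna}~(iii) to produce one ball, \cref{lem:rollingball}~(i) plus the no-neck hypothesis to propagate it over $\overline{\interior(\Om^r)}$, \cref{lem:rollingball}~(ii) along the curves of $\G^2_r$, and then \cref{prop:closed_class} for uniqueness of the maximal minimizer and for absorbing every minimizer into it. (Your side-worry about $\interior(\Om^r)=\emptyset$ when $\k>R_\Om^{-1}$ is unfounded: $\Om$ contains a ball of radius $R_\Om$, hence $\Om^r$ contains a ball of radius $R_\Om-r>0$.)

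There is, however, a genuine gap in your treatment of the endpoint case $\k=R_\Om^{-1}$, which is part of the statement. You claim the argument for (ii) and (iii) ``works verbatim'', but your proof of (ii) starts from ``by point~(i) each contains $C_0\oplus B_r$'', and point~(i) is neither asserted nor provable by your method when $r=R_\Om$: one always has $\interior(\Om^{R_\Om})=\emptyset$ (a point of $\Om^{R_\Om}$ with a full neighbourhood inside $\Om^{R_\Om}$ would have distance $>R_\Om$ from $\de\Om$, contradicting the definition of the inradius), so \cref{lem:rollingball}~(i) is vacuous at this radius and gives you no common set contained in all minimizers. Without that, you cannot verify the hypothesis $|E^M_\k\cap F^M_\k|\ge\pi\k^{-2}$ of \cref{prop:closed_class}: a priori two maximal minimizers could each contain a different inball (this is exactly what happens in the balanced dumbbell, which is only excluded here by the no-neck assumption). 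The paper closes this gap by invoking \cref{lem:rollingball}~(iii), which applies to \emph{maximal} minimizers without any interiority requirement on the target center: paired with \cref{prop:palla_interna}~(iii) and the no-neck hypothesis it shows that every maximal minimizer contains $\Om^{R_\Om}\oplus B_{R_\Om}$, whence any two maximal minimizers overlap in at least one inball and \cref{prop:closed_class} yields uniqueness; for (iii), an arbitrary minimizer contains some ball $B_{R_\Om}(x)$ with $x\in\Om^{R_\Om}$, which is then also contained in $E^M_\k$, and the same absorption argument applies. You should replace the ``verbatim'' claim with this use of \cref{lem:rollingball}~(iii).
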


\begin{proof}
Let $\k > R_\Om^{-1}$ be fixed. By \cref{prop:palla_interna}~(iii) we know that $E_{\k}$ contains a ball $B_{r}(x)$ with $x\in \Om^{r}$. By the no neck assumption, it then must contain any ball of radius $r$ centered on $\overline{\interior(\Om^{r})}$. If this were not the case, we could find $y\in \interior(\Om^{r})$ such that $B_{r}(y)$ is not contained in $E_{\k}$, thus obtaining a contradiction with \cref{lem:rollingball} (i). Hence $E_{\k} \supset \overline{\interior(\Om^{r})} \oplus B_r$. By definition of $\Gamma^2_r$, and thanks to \cref{lem:rollingball}(ii), $E_{\k}$ must also contain any ball of radius $r$ centered on every point of $\g\in \Gamma^{2}_{r}$. This establishes point~(i).

Regarding point~(ii), argue by contradiction and assume there are two distinct maximal minimizers. Since they both contain $C_0\oplus B_r$, their intersection has volume at least $\pi \kappa^{-2}$. Hence by \cref{prop:closed_class}, their union is a viable competitor with greater volume. This is a contradiction and point~(ii) follows. Point~(iii) follows with the same reasoning, again exploiting point~(i).

For $\k=R_\Om^{-1}$, \cref{prop:palla_interna}~(iii) paired with \cref{lem:rollingball}~(iii) implies that $E_\kappa$ contains $\Om^{R_\Omega}\oplus B_{R_\Omega}$. This is enough to prove points~(ii) and~(iii), by following the same reasoning used for general $\k> R_\Om^{-1}$.
\end{proof}

Second, we recall a lemma about the so-called \emph{arc-ball property} for minimizers of $\mathcal{F}_\kappa$. This property was first shown for Cheeger sets in planar strips in~\cite{LP16} and later extended to Cheeger sets in Jordan domains without necks in~\cite{LNS17}. The proof follows that of~\cite[Theorem~1.4]{LNS17}.

\begin{lem}[Arc-ball property]\label{lem:arcball}
Let $\Om$ be a Jordan domain with $|\de\Omega|=0$, and let $\k\ge R_{\Om}^{-1}$ be fixed. Assume $\Om$ has no necks of radius $r=\k^{-1}$ and let $E_{\k}$ be a minimizer of $\cF_{\k}$. Then any connected component of $\de E_{\k}\cap \Om$ is contained in the boundary of a ball of radius $r$ contained in $E_{\k}$.
\end{lem}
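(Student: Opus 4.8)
The plan is to localize near a fixed connected component $\Sigma$ of $\de E_{\k}\cap \Om$ and exploit the structure established so far: by \cref{prop:properties}~(i)--(ii), $\Sigma$ is a circular arc of curvature $\k$ of length at most $\pi r$ with endpoints on $\de\Om$, and by \cref{prop:palla_interna}~(i) the curvature of $\de E_{\k}$ is bounded from above by $\k$ in the viscous sense; moreover by \cref{prop:palla_interna}~(ii) $E_{\k}$ is a finite union of Jordan domains. Let $B$ be the open disk of radius $r$ whose boundary circle contains $\Sigma$ and which lies on the concave side of $\Sigma$ (the side into which $E_{\k}$ curves). The goal is to show $B\subset E_{\k}$. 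First I would argue that $B\cap\Om$ does not meet $\de E_{\k}$: any component of $\de E_{\k}\cap\Om$ is again an arc of curvature $\k$ with endpoints on $\de\Om$, and two distinct circles of radius $r$ cannot cross tangentially; a transversal intersection of such an arc with $\partial B$ inside $\Om$ would force, via the viscous curvature bound, a forbidden configuration — this is essentially the argument of \cite[Theorem~1.4]{LNS17}, which I would invoke after checking that all its ingredients (regularity, curvature bound, finiteness of components) are available here for general $\k\ge R_\Om^{-1}$ rather than only $\k=h_\Om$.

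Next, with $\de E_{\k}\cap B\cap\Om=\emptyset$, the connected open set $B\cap\Om$ lies entirely on one side of $\de E_{\k}$, so it is either contained in $E_{\k}$ or disjoint from it (up to null sets). To rule out the second alternative I would look at a point of $\Sigma$ and the germ of $E_{\k}$ there: since $B$ was chosen on the side toward which $E_{\k}$ bends, a neighborhood of an interior point of $\Sigma$ inside $B$ belongs to $E_{\k}$, forcing $B\cap\Om\subset E_{\k}$. It remains to upgrade $B\cap\Om\subset E_{\k}$ to $B\subset E_{\k}$; here the no-neck hypothesis of radius $r$ enters, guaranteeing that the portion of $B$ possibly sticking out of $\Om$ is controlled — in fact $B$ must be contained in $\Om$ because its center lies in $\Om^{r}$ (every point of $B\cap\Om$ at distance $\le$ something from the center is interior, and using that $\Sigma\subset\de B$ has its endpoints on $\de\Om$ one checks the center is at distance $\ge r$ from $\de\Om$). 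Once $B\subset\Om$ we conclude $B\subset E_{\k}$, which is the assertion.

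The main obstacle I anticipate is the first step: ruling out that another component of $\de E_{\k}\cap\Om$ crosses $\partial B$ inside $\Om$, and more delicately, handling the endpoints of $\Sigma$ on $\de\Om$, where $\Sigma$ meets the boundary and the local picture of $E_{\k}$ near $\de\Om$ must be used (via \cref{prop:properties}~(iii), $\nu_{E_{\k}}=\nu_\Om$ there). One must also make sure $B$ is well defined, i.e.\ that $\Sigma$ is a genuine arc and not degenerate, and that its ``concave side'' is unambiguous — this uses that $E_{\k}$ is locally a Jordan domain with the curvature pointing consistently inward by \cref{prop:palla_interna}~(i). Since all of these facts have been assembled in the preceding subsections for the range $\k\ge R_\Om^{-1}$, the proof should reduce to citing \cite[Theorem~1.4]{LNS17} and remarking that its hypotheses are met; I would therefore keep the argument short and point to that reference for the delicate endpoint analysis, exactly as the paper does for the analogous adapted statements.
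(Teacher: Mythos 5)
Your end point---reduce the lemma to checking that the ingredients of \cite[Theorem~1.4]{LNS17} are available for general $\k\ge R_\Om^{-1}$ and then cite that proof---is indeed what the paper does, and your inventory of the needed ingredients (\cref{prop:properties}~(i)--(iii), \cref{prop:palla_interna}~(i)--(ii)) is the right one. The trouble is the intermediate argument you sketch, which is not the mechanism of that proof and contains genuine gaps. First, the step ``$\de E_{\k}\cap B\cap\Om=\emptyset$'' is essentially equivalent to the conclusion of the lemma, and the justification offered does not support it: another component of $\de E_{\k}\cap\Om$ is an arc of a \emph{different} circle of radius $r$, and two circles of radius $r$ can perfectly well cross transversally inside $B$ without violating any curvature bound (both have curvature exactly $\k$), so neither tangential crossing nor the viscous bound rules this out as stated. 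Second, $B\cap\Om$ need not be connected ($B$ is convex, but $\Om$ is only a Jordan domain and $B$ may straddle $\de\Om$), so the dichotomy ``contained in $E_{\k}$ or disjoint from it'' does not follow from the absence of boundary points. Third, and most seriously, your final step is circular: the endpoints of $\Sigma$ lie on $\de B\cap\de\Om$, which only gives $\dist(x,\de\Om)\le r$ for the center $x$; the reverse inequality $\dist(x,\de\Om)\ge r$, i.e.\ $B\subset\Om$, is precisely part of what the lemma asserts and cannot be read off from the no-neck hypothesis alone.

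The device that makes the argument work, and which your sketch omits, is the one the paper sets up before invoking \cite{LNS17}: take the midpoint $y$ of the arc $\alpha\subset\de B_{r}(x)$, consider the balls $B_{t}(z_{t})$ with $z_{t}=y+t\frac{x-y}{|x-y|}$, internally tangent to $\alpha$ at $y$, and let $t^{*}$ be the largest $t$ with $B_{t}(z_{t})\subset E_{\k}$. If $t^{*}<r$, then $B_{t^{*}}(z_{t^{*}})$ touches $\de E_{\k}$ at some point off $\alpha$; at an interior touching point the curvature of $\de E_{\k}$ in the viscous sense would be at least $1/t^{*}>\k$, contradicting \cref{prop:palla_interna}~(i), while touching points on $\de\Om$ are excluded by the endpoint analysis of \cite[Theorem~1.4]{LNS17}, which is where \cref{prop:properties}~(iii) enters. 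This single monotone construction handles at once the two issues you tried to treat separately (other components entering $B$, and $B$ protruding from $\Om$); I would rebuild the proof around it rather than around the disk $B$ taken all at once.
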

\begin{proof}[Proof]
Let $\alpha$ be a connected component of $\de E_{\k}\cap \Om$. By \cref{prop:properties}~(i) we know that $\alpha$ is a circular arc of radius $r$ belonging to a ball $B_{r}(x)$. If $B_{r}(x)$ is contained in $E_{\k}$ we have nothing to prove. Otherwise, let $y\in \alpha$ be the midpoint of $\alpha$ and consider the largest $0<t<r$ such that setting $z_{t} = y + t\frac{(x-y)}{|x-y|}$ we have $B_{t}(z_{t})\subset E_{\k}$. One can now argue exactly as in the proof of~\cite[Theorem 1.4, pag.~21]{LNS17}.
\end{proof}

\begin{proof}[Proof of \cref{thm:main_shape_min}]
For $\k= h_\Om$ the characterization of minimal and maximal minimizers was proved in~\cite[Theorem~1.4]{LNS17}, and later extended to $\k\ge h_\Om$ in~\cite[Theorem~2.3]{LS20}. Here we generalize those previous proofs in order to deduce the complete classification of minimizers of the prescribed curvature problem. 

The proof of the structure of the maximal minimizer $E^{M}_{\k}$ in the case $\k \in [R_\Om^{-1}, h_\Om)$ is essentially the same as the one for $\k \ge h_\Om$. By \cref{cor:useful_cor}~(ii), we already know that it is unique. The assumption of no necks of radius $r$ paired with \cref{lem:rollingball}(iii) implies that $E^M_\k \supseteq \Om^r \oplus B_r$. The opposite inclusion follows by reasoning exactly as in the proof of Theorem~1.4 in~\cite{LNS17}.

\textit{Proof of (i)}. We assume $r = \k^{-1} <R_{\Om}$ and $\Gamma^{1}_{r}\neq \emptyset$. We start by proving that $C_{\theta}\oplus B_{r}$ minimizes ${\mathcal F}_{\k}$ for any $\theta:\Gamma_{r}^{1}\to [0,1]$, using the fact that $E_{\k}^{M} = C_{1}\oplus B_{r} = \Om^{r}\oplus B_{r}$. Indeed, by \cref{prop:struttura_diff} we know that $C_{\theta}$ is contractible and satisfies $\reach(C_{\theta})\ge r$. Therefore we can use Steiner's formulas, see for instance~\cite{FedererCM} and~\cite[Section~2.3]{LNS17}, and write
\begin{align}
|C_{\theta}\oplus B_{r}| = |C_{\theta}| + r\mathcal{M}_o(C_{\theta}) + \pi r^2\,, && P(C_{\theta}\oplus B_{r}) = \mathcal{M}_o(C_{\theta}) + 2\pi r\,,\label{eq:steiner_C_theta}
\end{align}
where $\mathcal{M}_o(F)$ is the \emph{outer Minkowski content of $F$}, i.e.,
\[
\mathcal{M}_o(F) = \lim_{t\to 0} \frac{|F\oplus B_t|-|F|}{t}\,.
\]
Since $|C_{\theta}|=|C_1|$ by definition, using~\eqref{eq:steiner_C_theta} and $r=\kappa^{-1}$, it is immediate to check that the equality $\cF_\k[C_{\theta}\oplus B_{r}]=\cF_\k[E^M_\k]$ holds. By \cref{cor:useful_cor}~(i), this implies in particular that $C_{0}\oplus B_{r}$ is the unique minimal minimizer.\par

Let now $E_\k$ be any minimizer. By \cref{cor:useful_cor}~(i), it contains $C_{0}\oplus B_{r}$. Given $\theta,\theta':\Gamma_{r}^{1}\to [0,1]$, we write $\theta \le \theta'$ if $\theta(\g) \le \theta'(\g)$ for every $\g\in \Gamma_{r}^{1}$. Let $\theta_{r}$ be maximal (in the sense of the order relation $\le$) among those $\theta$ for which $C_{\theta}\oplus B_{r}\subset E_{\k}$. 

In order to conclude that $E_{\k} = C_{\theta_{r}}\oplus B_{r}$, we only need to show the inclusion $E_{\k}\subset C_{\theta_{r}}\oplus B_{r}$. We can assume without loss of generality that $\theta_{r} \not\equiv 1$, as otherwise $E_{\k}$ would coincide with the maximal minimizer $E_{\k}^M$ given by $C_{1}\oplus B_{r}$. By \cref{cor:useful_cor}~(iii) $E_\k \subset E_\k^M$, therefore we can find $\g\in \G_{r}^{1}$ and a point $z$ in the interior of $E_{\k} \setminus (C_{\theta_{r}}\oplus B_{r})$, such that $z\in B_{r}(\g(\tau))$ for some $\tau \in (\theta_{r}(\g),1)$. 

We now have the following alternative: either it is $B_{r}(\g(\tau)) \subset E_{\k}$, or it is $\de E_{\k}\cap B_{r}(\g(\tau)) \neq \emptyset$. 

In the first case, using \cref{lem:rollingball}~(ii), and the no neck assumption, immediately gives a contradiction to the maximality of $\theta_r$. In the second case the intersection $\de E_{\k}\cap B_{r}(\g(\tau))$ must necessarily be a single arc of curvature $\k$ contained in a connected component $\alpha$ of $\de E_{\k}\cap \Om$. By \cref{lem:arcball} there exists a ball $B_{r}(y)\subset E_{\k}$ such that $\alpha \subset \de B_{r}(y)$, and with $z\in B_{r}(y)$ and $y\notin C_{\theta_r}$. Again, by using \cref{lem:rollingball}(ii) we reach a contradiction with the maximality of $\theta_{r}$.
\medskip

\textit{Proof of (ii)}. This is immediate because, as in the previous case, we have $\Om^{r}\oplus B_{r} \subset E_{\k}$. However, thanks to the assumption $\G_{r}^{1}=\emptyset$, we also have $E_{\k}\subset E_{\k}^{M} = \Om^{r}\oplus B_{r}$, which gives~(ii) at once.
\medskip

\textit{Proof of (iii)}. By \cref{prop:struttura_diff} (a) we have that either $\Om^{r}$ is reduced to a point, or it is a $C^{1,1}$-diffeomorphic image of the interval $[0,1]$. In the first case, the inball of $\Om$ is easily seen to represent the unique minimizer. In the second case if $[a,b]\subset [0,1]$ and $\g:[0,1]\to \Om^{r}$ is a $C^{1,1}$-diffeomorphism with curvature bounded by $\k$, one can show that $\g([a,b])\oplus B_{r}$ is a minimizer, arguing as we did for~(i). Viceversa, let $E_{\k}$ be a minimizer, for which we know that there exists a ball $B_{r}(x)$ centered at some $x\in \Om^{r}$ and contained in $E_{\k}$. Denote by $[a,b]$ the largest closed subinterval of $[0,1]$ such that $K:= \g([a,b])$ contains $x$ and $K\oplus B_{r}\subset E_{\k}$. Of course, if $[a,b]=[0,1]$ we conclude that $E_{\k}= E^{M}_{\k} = \Om^{r}\oplus B_{r}$. Otherwise, we argue as in the proof of (i) and finally obtain the opposite inclusion $E_{\k} \subset K\oplus B_{r}$. This finally shows (iii) and concludes the proof of the theorem.
\end{proof}

\begin{cor}\label{cor:nestedness}
Let $\Om$ be a Jordan domain such that  $|\de \Om|=0$ and let $\k_2>\k_1\ge R_\Om^{-1}$. If $\Om$ has no necks of radius $\k_2^{-1}$ and $\k_1^{-1}$, then one has
\[
E^M_{\k_2} \supseteq E^m_{\k_2}  \supseteq E^M_{\k_1} \supseteq E^m_{\k_1}.
\]
\end{cor}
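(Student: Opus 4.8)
The plan is to exploit the explicit geometric characterization of minimal and maximal minimizers provided by \cref{thm:main_shape_min}, together with the monotonicity of inner parallel sets in the radius. The three inclusions $E^M_{\k_i}\supseteq E^m_{\k_i}$ are immediate from \cref{def:min_max}, so the content is the middle inclusion $E^m_{\k_2}\supseteq E^M_{\k_1}$. Write $r_i=\k_i^{-1}$, so that $r_1>r_2$. By \cref{thm:main_shape_min} (considering the three cases $r_1<R_\Om$ with $\G^1_{r_1}\neq\emptyset$, $r_1<R_\Om$ with $\G^1_{r_1}=\emptyset$, and $r_1=R_\Om$), in every case one has $E^M_{\k_1}=\Om^{r_1}\oplus B_{r_1}$. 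Similarly $E^m_{\k_2}$ is either $C_0\oplus B_{r_2}$ (with $C_0\supseteq\overline{\interior(\Om^{r_2})}$), or $\Om^{r_2}\oplus B_{r_2}$, or a ball $B_{r_2}(x)$ with $x\in\Om^{R_\Om}$ when $r_2=R_\Om$ — but the last case forces $r_1>r_2=R_\Om$, which is impossible since $r_1\le R_\Om$ always, so only the first two occur. In all the relevant cases $E^m_{\k_2}\supseteq \overline{\interior(\Om^{r_2})}\oplus B_{r_2}$ at the very least, and more precisely $E^m_{\k_2}\supseteq C_0^{(r_2)}\oplus B_{r_2}$.

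The key geometric fact I would isolate and prove is that
\[
\Om^{r_1}\oplus B_{r_1}\ \subseteq\ \Om^{r_2}\oplus B_{r_2}
\qquad\text{whenever } r_1>r_2 .
\]
Indeed, if $y\in\Om^{r_1}\oplus B_{r_1}$ then $y\in B_{r_1}(x)$ for some $x$ with $\dist(x;\de\Om)\ge r_1$; writing $y=x+v$ with $|v|\le r_1$, set $x'=x+(1-r_2/r_1)v$ and note $|y-x'|=(r_2/r_1)|v|\le r_2$, while $\dist(x';\de\Om)\ge \dist(x;\de\Om)-|x-x'|\ge r_1-(r_1-r_2)=r_2$, so $x'\in\Om^{r_2}$ and hence $y\in\Om^{r_2}\oplus B_{r_2}$. (Equivalently: $\Om^{r_1}\oplus B_{r_1}=\{\dist(\cdot;\de\Om)\ge r_1\}\oplus B_{r_1}$ is nothing but the set of points at distance $\le$ something from $\de\Om$... one can also phrase it via $\Om^{r_1}\oplus B_{r_1}\subseteq \Om^{r_1-r_2}\oplus B_{r_2}\oplus\cdots$; the one-line computation above is cleanest.) Combined with $\Om^{r_2}\subseteq \Om^{r_2}\oplus B_{r_2}$-type trivialities and the inclusion $\overline{\interior(\Om^{r_2})}\oplus B_{r_2}\subseteq \Om^{r_2}\oplus B_{r_2}$, this gives
\[
E^M_{\k_1}=\Om^{r_1}\oplus B_{r_1}\ \subseteq\ \Om^{r_2}\oplus B_{r_2}\ =\ E^M_{\k_2},
\]
but I actually need the stronger bound against $E^m_{\k_2}$.

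To bridge from $E^M_{\k_1}$ to $E^m_{\k_2}$ rather than $E^M_{\k_2}$, I would argue as follows: $E^M_{\k_1}=\Om^{r_1}\oplus B_{r_1}$ is connected with $\reach\ge r_1\ge r_2$, it contains a ball of radius $r_2$ (since it contains balls of radius $r_1$), and by the inclusion above every ball $B_{r_2}(x)$ with $x\in\Om^{r_2}$ that meets... — more directly, $E^M_{\k_1}$ is itself a competitor in $\mathcal{C}(\k_2)$ whose boundary arcs have curvature $\le r_1^{-1}=\k_1<\k_2$, hence applying \cref{lem:rollingball}~(i) (or \cref{cor:useful_cor}~(i)) with the no-neck hypothesis at radius $r_2$ to any minimizer $E^m_{\k_2}$: since $E^m_{\k_2}$ contains some ball of radius $r_2$ centered in $\Om^{r_2}$, and $\Om^{r_2}$ is connected by the no-neck property, $E^m_{\k_2}$ contains $\Om^{r_2}\oplus B_{r_2}\supseteq \Om^{r_1}\oplus B_{r_1}=E^M_{\k_1}$ — wait, this last step needs care because $E^m_{\k_2}$ need not contain all of $\Om^{r_2}\oplus B_{r_2}$ when $\G^1_{r_2}\neq\emptyset$. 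The correct route is: $E^m_{\k_2}=C_0^{(r_2)}\oplus B_{r_2}$ with $C_0^{(r_2)}\supseteq\overline{\interior(\Om^{r_2})}$, so it suffices to show $\Om^{r_1}\subseteq \overline{\interior(\Om^{r_2})}$ — this holds because $\Om^{r_1}\subseteq\Om^{r_2}$ and, since $r_1>r_2$, every point of $\Om^{r_1}$ lies in the interior of $\Om^{r_2}$ (a point at distance $\ge r_1$ from $\de\Om$ has a whole $(r_1-r_2)$-ball around it inside $\Om^{r_2}$). Then $\Om^{r_1}\oplus B_{r_1}\subseteq \big(\overline{\interior(\Om^{r_2})}\oplus B_{r_1}\big)$, and since $r_1>r_2$ one more application of the parallel-set computation gives $\overline{\interior(\Om^{r_2})}\oplus B_{r_1}\subseteq\overline{\interior(\Om^{r_2})}\oplus B_{r_2}$? — no, that inclusion goes the wrong way for a fixed base set. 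So the genuinely correct chain is: $\Om^{r_1}\oplus B_{r_1}\subseteq \Om^{r_2}\oplus B_{r_2}$ by the computation above, and separately $\Om^{r_1}\subseteq\interior(\Om^{r_2})$, hence $\Om^{r_1}\oplus B_{r_1}\subseteq\interior(\Om^{r_2})\oplus B_{r_1}$; now note $\interior(\Om^{r_2})\oplus B_{r_1}\subseteq \Om^{r_2-r_1+r_1}$... The cleanest is simply to observe $\Om^{r_1}\oplus B_{r_1}=(\Om^{r_1})^{\,0}\oplus B_{r_1}$ and to show directly, by the displacement argument, that each $y\in\Om^{r_1}\oplus B_{r_1}$ lies in $B_{r_2}(x')$ for some $x'\in\interior(\Om^{r_2})$, giving $E^M_{\k_1}\subseteq\overline{\interior(\Om^{r_2})}\oplus B_{r_2}\subseteq E^m_{\k_2}$.

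\textbf{Main obstacle.} The delicate point — and the one I would spend the most care on — is precisely this last bridging step: matching $E^M_{\k_1}$ not merely against $E^M_{\k_2}$ but against the \emph{minimal} minimizer $E^m_{\k_2}$, since when $\G^1_{r_2}\neq\emptyset$ the set $E^m_{\k_2}$ is strictly smaller than $\Om^{r_2}\oplus B_{r_2}$ and excludes the "tendrils". One must show that the tendril regions of $\Om$ at scale $r_2$ are disjoint from $E^M_{\k_1}=\Om^{r_1}\oplus B_{r_1}$, which follows from the strict inequality $r_1>r_2$: points of $\Om^{r_1}$ lie in $\interior(\Om^{r_2})$, the tendril curves of $\G^1_{r_2}$ meet $\overline{\interior(\Om^{r_2})}$ only at their base point by \cref{prop:struttura_diff}(b)(ii), and the $r_1$-enlargement stays away from the far ends of those curves — this requires a short argument using $\reach(\Om^{r_2})$-type estimates or, more simply, the observation that any point of $\Om^{r_1}\oplus B_{r_1}$ already lies in $\overline{\interior(\Om^{r_2})}\oplus B_{r_2}=C_0^{(r_2)}\oplus B_{r_2}\setminus(\text{tendril enlargements})$, hence in $E^m_{\k_2}$ by \cref{cor:useful_cor}(i). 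Everything else is the elementary monotonicity of Minkowski sums of inner parallel sets and a direct appeal to \cref{thm:main_shape_min}.
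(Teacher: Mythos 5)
Your proposal is correct and follows essentially the same route as the paper: identify $E^M_{\k_1}=\Om^{r_1}\oplus B_{r_1}$ and $E^m_{\k_2}\supseteq \overline{\interior(\Om^{r_2})}\oplus B_{r_2}$ via \cref{thm:main_shape_min}, then prove $\Om^{r_1}\oplus B_{r_1}\subseteq \overline{\interior(\Om^{r_2})}\oplus B_{r_2}$ by the interpolation/displacement computation (the paper cites the analogous Corollary~5.6 of~\cite{LS20} for exactly this step). Your worry about the tendrils is resolved exactly as you eventually conclude: the displacement argument lands directly in $\overline{\interior(\Om^{r_2})}\oplus B_{r_2}$, which every minimizer of $\mathcal{F}_{\k_2}$ contains, so the $\G^1_{r_2}$ curves never enter the argument.
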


\begin{proof}
One reasons in the same way as in~\cite[Corollary~5.6]{LS20}. Fix $r_i = \k_i^{-1}$ for $i=1,2$, with $r_2<r_1$. Trivially the set $\overline{\interior(\Om^{r_2})}$ contains $\Om^{r_1}$, and thus $\overline{\interior(\Om^{r_2})}\oplus B_{r_2}$ contains $\Om^{r_1}\oplus B_{r_1}$. The claim immediately follows from \cref{thm:main_shape_min}.
\end{proof}

\begin{rem}\label{rem:nestedness}
The inclusion $E^m_{\k_2}  \supseteq E^M_{\k_1}$ is strict as soon as $\k_{1}<\k_{2}$ and $0<|E^M_{\k_1}|<|\Om|$. Indeed, this information on the volume is equivalent to say $E^M_{\k_1} \neq \Om$ and $E^M_{\k_1} \neq \emptyset$. Assuming then $E^M_{\k_1} = E^m_{\k_2}$ one obtains $\de E^M_{\k_1} \cap \Om=\de E^m_{\k_2} \cap \Om\neq \emptyset$, whence $\k_1=\k_2$, a contradiction.
\end{rem}

\section{The isoperimetric profile}\label{sec:isop_prof}
 
In this section we use \cref{thm:main_shape_min} to characterize all isoperimetric sets of a Jordan domain $\Omega$ with no necks of any radius. The full characterization is the content of \cref{thm:m_iso}, and this will be employed to prove some convexity properties of the isoperimetric profile, see \cref{sec:convexity}. Before the proof of \cref{thm:m_iso}, that is the core of the section, we need to prove the following lemma. 
 
\begin{lem}\label{lem:upsc}
Let $R>0$ be fixed and let $\Om$ be a Jordan domain with no necks of radius $r$, for all $r\le R$. Let $m(r)$ and $\mu(r)$ be the functions defined as
\begin{align*}
m(r) = \mathcal{M}_o(\Om^r)\,, && \mu(r) = \mathcal{M}_o(\overline{\interior(\Om^r)})\,.
\end{align*}
Then, $m$ is upper semicontinuous in $(0,R]$, while $\mu$ is lower semicontinuous in $(0,R)$. Consequently, one has
\begin{equation}\label{eq:usclsc}
\limsup_{r\to \hat{r}} |E^{M}_{r^{-1}}| \le |E^{M}_{\hat{r}^{-1}}|,\quad \liminf_{r\to \hat{r}} |E^{m}_{r^{-1}}| \ge |E^{m}_{\hat{r}^{-1}}|
\end{equation}
for every $\hat{r}\in (0,R]$.
\end{lem}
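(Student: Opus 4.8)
The strategy is to work directly with the inner parallel sets $\Om^r$ and their outer Minkowski contents, using Steiner-type formulas and the monotonicity of the family $\{\Om^r\}$ in $r$. First I would record the elementary fact that $r\mapsto \Om^r$ is monotone nonincreasing (with respect to inclusion) and that $\Om^{r'}\to \Om^r$ in the Hausdorff sense as $r'\to r$, since $\dist(\cdot;\de\Om)$ is $1$-Lipschitz; moreover $\Om^{r+s}\subset (\Om^r)^{s}$ for $s>0$ whenever $\Om^r$ has reach at least $r$. The key is then to exploit the Steiner formulas already used in the proof of \cref{thm:main_shape_min}, namely $|C\oplus B_r| = |C| + r\,\mathcal{M}_o(C) + \pi r^2$ for the relevant contractible compacta $C$ of reach $\ge r$, together with $P(C\oplus B_r) = \mathcal{M}_o(C) + 2\pi r$; applied to $C=C_1=\Om^r$ (for the maximal minimizer) and to $C=C_0=\overline{\interior(\Om^r)}$ (for the minimal minimizer), these give $|E^M_{r^{-1}}| = |\Om^r| + r\,m(r) + \pi r^2$ and $|E^m_{r^{-1}}| = |\overline{\interior(\Om^r)}| + r\,\mu(r) + \pi r^2$ (with the obvious degenerate modifications in cases (a) and (iii) of \cref{prop:struttura_diff}, where $\Om^r$ is a curve or a point and $|\Om^r|=0$). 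Since $r\mapsto |\Om^r|$ and $r\mapsto |\overline{\interior(\Om^r)}|$ are continuous (again by Hausdorff convergence and $|\de\Om|=0$), the semicontinuity of $|E^M_{r^{-1}}|$ and $|E^m_{r^{-1}}|$ reduces exactly to the semicontinuity of $m$ and $\mu$, which is the content of the first assertion, so~\eqref{eq:usclsc} will follow immediately once $m$ is shown to be upper semicontinuous and $\mu$ lower semicontinuous.

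\textbf{Upper semicontinuity of $m$.} To prove $\limsup_{r\to\hat r} m(r)\le m(\hat r)$, I would use the variational characterization of the outer Minkowski content of a set of positive reach as the perimeter of that set (here $\mathcal{M}_o(\Om^r)=P(\Om^r\oplus B_s)-2\pi s$ divided appropriately, but more directly $\mathcal{M}_o(\Om^r)$ equals the one-dimensional Hausdorff measure of $\de(\Om^r)$ when $\Om^r$ is nice, or the perimeter of $\Om^r$). The cleanest route is: $E^M_{r^{-1}} = \Om^r\oplus B_r$, and these sets are nested decreasingly as $r$ increases by \cref{cor:nestedness}; a decreasing family of compact sets has $\bigcap_{r>\hat r}(\Om^r\oplus B_r) \supseteq \Om^{\hat r}\oplus B_{\hat r}$, and one checks equality, whence $|E^M_{r^{-1}}|\downarrow$-type control gives $\limsup_{r\downarrow\hat r}|E^M_{r^{-1}}|\le |E^M_{\hat r^{-1}}|$ directly. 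For $r\uparrow \hat r$ one instead uses lower semicontinuity of perimeter under $L^1$ convergence $\Om^r\oplus B_r \to \Om^{\hat r}\oplus B_{\hat r}$ together with the Steiner identity to convert the perimeter bound into the content bound; since $P(E^M_{r^{-1}}) = m(r)+2\pi r$ and perimeter is lower semicontinuous, one must be careful about the direction — here the correct statement is that $\limsup m(r)\le m(\hat r)$ follows because $\Om^r \supseteq \Om^{\hat r}$ forces $m(r)\ge$ nothing useful directly, so instead I would argue via the \emph{sets} $E^M$: nestedness plus $|\de\Om|=0$ gives $|E^M_{r^{-1}}|\to |E^M_{\hat r^{-1}}|$ from the correct side, and then read off $m$.

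\textbf{Lower semicontinuity of $\mu$ and the main obstacle.} For $\mu$, the point is that $\overline{\interior(\Om^r)}$ can \emph{jump} as $r$ crosses a value where a connected component of $\interior(\Om^r)$ degenerates or where tendrils ($\Gamma^1_r$) appear or disappear; the interior operator is not continuous under Hausdorff convergence. The saving fact is that $E^m_{r^{-1}} = C_0\oplus B_r = \overline{\interior(\Om^r)}\oplus B_r \cup \bigcup_{\gamma\in\Gamma^2_r}\gamma([0,1])\oplus B_r$, and since we assume no necks of radius $r$ for \emph{all} $r\le R$, the final clause of \cref{prop:struttura_diff} gives $\Gamma^2_r=\emptyset$, so $E^m_{r^{-1}}=\overline{\interior(\Om^r)}\oplus B_r$ cleanly. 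Then $\liminf_{r\to\hat r}|E^m_{r^{-1}}| \ge |E^m_{\hat r^{-1}}|$ should follow because any ball $B_{\hat r}(x)\subset\Om$ with $x\in\interior(\Om^{\hat r})$ persists as a ball $B_r(x)\subset \Om$ for $r$ slightly below $\hat r$ with $x\in\interior(\Om^r)$, so $\overline{\interior(\Om^{\hat r})}\oplus B_{\hat r}$ is approximately contained in $\overline{\interior(\Om^r)}\oplus B_r$ in the limit, yielding the measure inequality via Fatou. I expect the \textbf{main obstacle} to be precisely handling this interior-continuity issue rigorously: showing that $\liminf_{r\to\hat r}\mu(r)\ge\mu(\hat r)$ (equivalently controlling how much of $\Om^{\hat r}$ fails to be in the closure of the interior of nearby $\Om^r$), which requires carefully using the no-neck hypothesis and the tree/structure description of \cref{prop:struttura_diff} to rule out that a whole chunk of $\overline{\interior(\Om^r)}$ disappears in the limit — and correspondingly, for $m$, ruling out that $\mathcal{M}_o(\Om^r)$ blows up as $r\to\hat r$, which one controls by monotonicity of the Minkowski enlargements and the reach bound $\reach(C_1)\ge r$ from \cref{prop:struttura_diff}. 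The degenerate cases (where $\interior(\Om^r)=\emptyset$ or $\Om^r$ is a single point) I would dispatch separately, noting that there $\mu(r)$ involves the length of a $C^{1,1}$ curve which varies lower-semicontinuously as subarcs are added or removed.
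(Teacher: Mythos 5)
Your reduction of \eqref{eq:usclsc} to the semicontinuity of $m$ and $\mu$ via Steiner's formulas, combined with the continuity of $r\mapsto|\Om^r|=|\interior(\Om^r)|$, is exactly the second half of the paper's proof. The first half is where the substance lies, and there your argument has a genuine gap: the paper obtains the upper semicontinuity of $m$ and the lower semicontinuity of $\mu$ by citing Lemma~6.1 of~\cite{LS20} (extended to include the endpoint $R$ for $m$), whereas you attempt a direct proof and only complete the directions that follow immediately from \cref{cor:nestedness}. Concretely, for $m$ the case $r\downarrow\hat r$ does follow from $E^M_{r^{-1}}\subseteq E^M_{\hat r^{-1}}$, but for $r\uparrow\hat r$ your argument is circular: ``nestedness plus $|\de\Om|=0$ gives $|E^M_{r^{-1}}|\to|E^M_{\hat r^{-1}}|$\dots and then read off $m$'' asserts precisely what must be proved. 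What is actually needed is the identity $\bigl|\bigcap_{r<\hat r}(\Om^r\oplus B_r)\bigr|=|\Om^{\hat r}\oplus B_{\hat r}|$, which one gets from the Hausdorff convergence $\Om^r\downarrow\Om^{\hat r}$ (placing the intersection inside $\Om^{\hat r}\oplus\overline{B_{\hat r}}$) together with $|\de(\Om^{\hat r}\oplus B_{\hat r})|=0$; the hypothesis $|\de\Om|=0$ is not the relevant null set here, and the lower semicontinuity of the perimeter that you first invoke gives, as you yourself notice, an inequality in the useless direction.

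For $\mu$ you correctly isolate $\liminf_{r\downarrow\hat r}\mu(r)\ge\mu(\hat r)$ as the problematic direction (the case $r\uparrow\hat r$ being handled by the monotone union $\overline{\interior(\Om^{\hat r})}\oplus B_r\uparrow\overline{\interior(\Om^{\hat r})}\oplus B_{\hat r}$), but you explicitly leave it unresolved as ``the main obstacle''; this is the heart of the lemma and cannot be omitted. One way to close it: since $\{\dist(\cdot;\de\Om)=\hat r\}$ has empty interior (the distance function satisfies $|\nabla\dist|=1$ a.e.), one has $\overline{\interior(\Om^{\hat r})}\subseteq\overline{\{\dist>\hat r\}}$, and $\{\dist>\hat r\}=\bigcup_{r>\hat r}\{\dist>r\}\subseteq\bigcup_{r>\hat r}\interior(\Om^r)$; hence every $z\in B_{\hat r}(x)$ with $x\in\overline{\interior(\Om^{\hat r})}$ belongs to $E^m_{r^{-1}}$ for all $r>\hat r$ close enough to $\hat r$, and monotone convergence yields $\liminf_{r\downarrow\hat r}|E^m_{r^{-1}}|\ge|E^m_{\hat r^{-1}}|$. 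Note that this rests on the eikonal property of the distance function rather than on the tree structure of the cut locus that you propose to exploit. In short: the skeleton and the Steiner reduction match the paper, but the two nontrivial semicontinuity inequalities --- the actual content of the lemma --- are not established in your proposal.
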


\begin{proof}
Thanks to~\cite[Lemma~6.1]{LS20}, we already have the upper (resp., lower) semicontinuity of $m$ (resp., $\mu$) on the open interval $(0,R)$. The very same proof yields as well the upper semicontinuous up to $R$ included of $m$, and thus we refer the reader to the original one. Then, by coupling Steiner's formulas
\begin{align*}
|E^M_{r^{-1}}|&=\pi {r}^2 + {r}\,m(r) + |\Om^{r}|\,,\\ 
|E^m_{r^{-1}}|&=\pi {r}^2 + {r}\,\mu(r) + |\overline{\interior(\Om^{r})}|
\end{align*}
with the properties of $m(r)$ and $\mu(r)$, and the fact that $|\Om^{r}| = |\interior(\Om^{r})|$ for each $r>0$, we obtain \eqref{eq:usclsc}.
\end{proof}

\begin{proof}[Proof of \cref{thm:m_iso}]
Assume $\pi R_{\Om}^{2} < V < |\Om|$ without loss of generality, and note that the thesis is a consequence of the following claim: 
\begin{equation}\label{eq:claim}
\exists\,\hat{\k}\ge R_{\Om}^{-1},\ \exists\,E_{\hat{\k}}\in \argmin{\mathcal F}_{\hat{\k}}\ \text{ with } |E_{\hat{\k}}|=V\,. 
\end{equation}
Indeed, let $E_{V}$ be such that $|E_{V}|=V$ and $P(E_{V})={\mathcal J}(V)$. Assuming that \eqref{eq:claim} is verified, we would deduce that $E_{V}$ minimizes ${\mathcal F}_{\hat{\k}}$ since 
\[
P(E_{V}) - \hat{\k} V \le P(E_{\hat{\k}}) - \hat{\k} V = \inf_{C(\hat{\k})}{\mathcal F}_{\hat{\k}}
\]
(note that the previous inequality is in fact an identity). 
In order to prove \eqref{eq:claim}, we define
\[
\k^*=\inf\{\,\k: |E^M_\k|>V\,\}, \qquad \qquad \k_*=\sup\{\,\k: |E^m_\k|<V\,\}\,.
\]
Notice that both sets are nonempty, hence the infimum and supremum are finite. Indeed, being $\Om$ an open set, we can approximate it in $L^1$ by sets of the form $\Om^r \oplus B_r$, for $0<r\le R_\Om$, which shows that the first set is nonempty. The second set is obviously nonempty, as it contains at least the curvature of the inball of $\Om$ by \cref{thm:main_shape_min}~(iii).

We now claim that $\k_{*} = \k^{*}$. Indeed, let us first assume by contradiction that $\k^*< \k_*$. Then we would find two curvatures $\kappa_1, \kappa_2$ such that
\[
\kappa^* < \kappa_1 < \kappa_2 < \kappa_*\,.
\]
By definition of $\kappa^*$ and of $\kappa_*$ we would have
\[
|E^m_{\kappa_2}| < V < |E^M_{\kappa_1}|\,,
\]
against the fact $E^m_{\kappa_2} \supseteq E^M_{\kappa_1}$, as granted by \cref{cor:nestedness}. By a similar argument we can also exclude the case $\k_{*}<\k^{*}$. Indeed, assume that the strict inequality holds and, for any $\k \in (\k_*, \k^*)$, let $E_\k$ be a minimizer of $\mathcal{F}_\k$.  On the one hand, the lower bound $\k > \k_*$ implies $|E^m_\k| \ge V$, while the upper bound $\k < \k^*$ implies $|E^M_\k| \le V$. Hence, we have $|E^M_\k|=|E^m_\k|=V$ for all $\k \in (\k_*, \k^*)$. Since $V< |\Omega|$, this yields a contradiction with \cref{cor:nestedness} (see also \cref{rem:nestedness}).\par

Let now $\hat \k := \k^*=\k_*$ and $\hat r=1/\hat \k$, hence by \cref{lem:upsc} we infer that
\[
|E^M_{\hat\k}| \ge V\ge |E^m_{\hat \k}|\,.
\]
Now, if one of these two inequalities is an equality, we are done. If they are both strict, by \cref{thm:main_shape_min} we necessarily have that  $\Gamma^1_{\hat r}$ is not empty. As the parametrized sets $C_{\theta}\oplus B_{r}$ and $K\oplus B_{r}$, defined in \cref{thm:main_shape_min}, form a family of minimizers with volumes varying continuously from $|E^m_{\hat \k}|$ up to  $|E^M_{\hat \k}|$, we can always find one of them with volume exactly $V$, which proves~\eqref{eq:claim} and thus the theorem.
\end{proof}

\begin{rem}
Nestedness of the isoperimetric sets is not true in general. However, with reference to \cref{thm:main_shape_min}, one can always select a one-parameter family of nested isoperimetric sets. In the case $r<R_{\Om}$, this family is of the form $C_{\theta_{s}}\oplus B_{r}$. The choice of $\theta_{s}$ can be made in such a way that the map $s\mapsto |C_{\theta_{s}}\oplus B_{r}|$ is continuous, $\theta_{s}$ is increasing in $s\in [0,1]$ with respect to the order relation $\le$, $\theta_{0}\equiv 0$, and $\theta_{1} \equiv 1$. Similarly, in the case $r=R_{\Om}$, one can set $K_{s}$ as the image of the interval $[0,s]$ through the $C^{1,1}$-parametrization $\g$ of the set $\Om^{r}$, and consider the nested family $K_{s}\oplus B_{r}$. 
\end{rem}

\begin{cor}\label{cor:mapisop}
Let $\Om$ be a Jordan domain with $|\de \Om|=0$ and without necks of radius $r$, for all $r\in(0, R_\Om]$. Suppose that $\Om^{R_\Om}=\{x\}$. Then the following hold:
\begin{itemize}
\item[(i)] if the set $\G^1_r$ consists of at most one curve for all $r<R_\Om$, then there exists a unique isoperimetric set for all $V\in [\pi R_{\Om}^{2},|\Omega|)$;
\item[(ii)] if the set $\G^1_r$ is empty for all $r<R_\Om$, and if we let $\k$ be the curvature of $\de E\cap \Om$, where $E$ is the unique isoperimetric set of volume $V\ge \pi R^2_\Om$, then the map $\Phi(V) = \k$ is a bijection.
\end{itemize} 
\end{cor}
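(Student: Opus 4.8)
The plan is to deduce both statements directly from \cref{thm:m_iso} together with \cref{thm:main_shape_min}, using the hypothesis $\Om^{R_\Om}=\{x\}$ to pin down uniqueness.

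First I would address (i). By \cref{thm:m_iso}, for every $V\in[\pi R_\Om^2,|\Om|)$ there is a unique $\k=\Phi(V)\ge R_\Om^{-1}$ such that the isoperimetric sets of volume $V$ are exactly the minimizers of $\mathcal{F}_\k$ in $\mathcal{C}(\k)$ having that volume; write $r=\k^{-1}$. I then split according to the trichotomy of \cref{thm:main_shape_min}. If $r=R_\Om$, then since $\Om^{R_\Om}=\{x\}$ is a single point, part (iii) of that theorem says the inball $B_{R_\Om}(x)$ is the unique minimizer, hence the unique isoperimetric set (of volume $\pi R_\Om^2$). If $r<R_\Om$ and $\G^1_r=\emptyset$, part (ii) gives a unique minimizer $\Om^r\oplus B_r$, hence a unique isoperimetric set. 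The only remaining case is $r<R_\Om$ and $\G^1_r$ consisting of exactly one curve $\g$. Here part (i) describes the minimizers as $C_\theta\oplus B_r$ for $\theta:\G^1_r\to[0,1]$; since $\G^1_r=\{\g\}$, the parameter $\theta$ is just a single number $t=\theta(\g)\in[0,1]$, and the set $C_t$ is $\overline{\interior(\Om^r)}\cup\g([0,t])$. I then argue that the volume $t\mapsto|C_t\oplus B_r|$ is strictly increasing in $t$: by the Steiner formula \eqref{eq:steiner_C_theta} the volume $|C_t\oplus B_r|$ is controlled by $|C_t|$ and $\mathcal{M}_o(C_t)$, and appending a nondegenerate $C^{1,1}$ arc of positive length adds positive Minkowski content (and the curves in $\G^1_r$ meet $\overline{\interior(\Om^r)}$ only at $\g(0)$, so no overlap occurs); hence distinct $t$ give distinct volumes, so for the prescribed volume $V$ there is exactly one admissible $t$, i.e.\ a unique isoperimetric set. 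This covers all cases, proving (i).

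For (ii), the hypothesis is that $\G^1_r=\emptyset$ for all $r<R_\Om$ (and still $\Om^{R_\Om}=\{x\}$), which is a special case of (i), so each $V\ge\pi R_\Om^2$ has a unique isoperimetric set $E$, whose free boundary $\de E\cap\Om$ has a well-defined curvature $\k$; set $\Phi(V)=\k$. Surjectivity onto $[R_\Om^{-1},+\infty)$: given any $\k\ge R_\Om^{-1}$ with no necks of radius $r=\k^{-1}$, the set $\Om^r\oplus B_r$ is a minimizer of $\mathcal{F}_\k$ by \cref{thm:main_shape_min}(ii) (or the inball when $r=R_\Om$ by (iii)), its volume $V$ lies in $[\pi R_\Om^2,|\Om|)$, and its free boundary has curvature exactly $\k$, so $\Phi(V)=\k$. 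Injectivity: if $\Phi(V_1)=\Phi(V_2)=\k$ then both isoperimetric sets are minimizers of the same $\mathcal{F}_\k$; when $r<R_\Om$ and $\G^1_r=\emptyset$, part (ii) of \cref{thm:main_shape_min} says $\mathcal{F}_\k$ has a \emph{unique} minimizer $\Om^r\oplus B_r$, so $V_1=|\Om^r\oplus B_r|=V_2$; when $r=R_\Om$, the unique isoperimetric set is the inball, again forcing $V_1=V_2=\pi R_\Om^2$. Hence $\Phi$ is a bijection.

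The main obstacle I anticipate is the case analysis in (i) when $\G^1_r$ has exactly one curve: one must verify carefully that the volume map $t\mapsto|C_t\oplus B_r|$ is injective. The key inputs are the Steiner formulas from \eqref{eq:steiner_C_theta} (valid since $\reach(C_t)\ge r$ by \cref{prop:struttura_diff}), the positivity of the added outer Minkowski content when one enlarges $C_t$ by a nondegenerate $C^{1,1}$ arc, and the fact — built into \cref{prop:struttura_diff}(b)(ii) — that a curve in $\G^1_r$ touches $\overline{\interior(\Om^r)}$ only at its initial endpoint, so that appending $\g([t,t'])$ genuinely increases the set. Everything else is a bookkeeping exercise in quoting \cref{thm:m_iso} and the three alternatives of \cref{thm:main_shape_min}.
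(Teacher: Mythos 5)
Your treatment of (i) and of the injectivity of $\Phi$ in (ii) is essentially the paper's own proof. The paper likewise reduces (i) to the observation that, once \cref{thm:m_iso} has fixed the curvature $\k$, either $E^m_\k=E^M_\k$ (so uniqueness is immediate, covering both your $\G^1_r=\emptyset$ case and the inball case $r=R_\Om$ with $\Om^{R_\Om}=\{x\}$) or $\G^1_r$ is a single curve and the minimizers form the one-parameter family $E_t=\bigl(\overline{\interior(\Om^r)}\cup\g(0,t)\bigr)\oplus B_r$ along which $t\mapsto|E_t|$ is strictly increasing; your Steiner-formula justification of that monotonicity is a welcome addition, since the paper merely asserts it. For injectivity of $\Phi$ the paper argues from $E^m_\k=E^M_\k$ together with \cref{cor:nestedness}, while you argue from uniqueness of the minimizer of $\mathcal{F}_\k$ for fixed $\k$ --- the same fact, packaged slightly differently, and your version is correct.

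The one step that fails is your surjectivity claim in (ii): $\Phi$ is \emph{not} in general onto $[R_\Om^{-1},+\infty)$. The weak link is the assertion that the volume of $\Om^r\oplus B_r$ ``lies in $[\pi R_\Om^2,|\Om|)$''. Whenever $\Om$ satisfies an interior ball condition of radius $\bar r<R_\Om$ one has $\Om^r\oplus B_r=\Om$, hence $|\Om^r\oplus B_r|=|\Om|$, for every $r\le\bar r$; a square with corners rounded at radius $\bar r$ satisfies all the hypotheses of (ii) and exhibits exactly this. Since $V=|\Om|$ is excluded from the domain of $\Phi$, no volume is mapped to such curvatures, and the image of $\Phi$ is a bounded interval. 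This is precisely the content of the corollary that follows \cref{cor:mapisop} in the paper, which characterizes when $\Phi([\pi R_\Om^2,|\Om|))$ equals $[R_\Om^{-1},\bar\k)$; that result would be vacuous if $\Phi$ were always onto $[R_\Om^{-1},+\infty)$. Accordingly, ``bijection'' in the statement must be read as ``bijection onto its image'', i.e.\ injectivity, which is all the paper proves and which your argument does establish; simply drop the surjectivity paragraph.
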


\begin{proof}
Let $V \ge  \pi R^2_\Om$ be fixed, and let $\kappa$ be the unique curvature such that $|E^m_\kappa|\le V\le |E^M_\kappa|$, as in the proof of \cref{thm:m_iso}. If $E^m_\kappa$ and $E^M_\kappa$ coincide, i.e., $\G^1_r=\emptyset$, uniqueness follows. If otherwise $\G^1_r$ consists of exactly one curve $\gamma$, then by \cref{thm:main_shape_min} all the possible minimizers of $\mathcal{F}_\kappa$ are given by the one-parameter family
\[
E_t:=\left(\overline{\interior(\Om^r)} \cup \gamma(0,t)\right) \oplus B_r\,.
\]
As $t\mapsto |E_t|$ is strictly increasing, there exists a unique $t\in [0,1]$ such that the corresponding minimizer has volume exactly $V$, hence the uniqueness.

Regarding the second point, by \cref{thm:main_shape_min}, we already know that for each volume $V$ there exists a unique curvature $\kappa$, thus we are left to prove that the map is injective. Under the assumptions of the corollary, for each $\kappa$ we have the set equality $E^m_\kappa=E^M_\kappa$. Pairing this with the nestedness property granted by \cref{cor:nestedness}, yields the injectivity.
\end{proof}

\begin{cor}
Let $\Om$ be a Jordan domain with $|\de \Om|=0$ and without necks of radius $r$, for all $r\in(0, R_\Om]$. For all $V\ge \pi R^2_\Om$, let $\Phi(V)$ be the map defined as in \cref{cor:mapisop}~(ii). Then the image $\Phi([\pi R^{2}_{\Om}, |\Om|))$ is a finite interval $[R_\Om^{-1}, \bar \k)$ if and only if $\Om$ satisfies an interior ball condition of curvature $\bar \k$, that is, $\reach(\R^2 \setminus \Om)\ge \bar \kappa^{-1}$.
\end{cor}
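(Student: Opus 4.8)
The plan is to reduce the claim to the already-understood behaviour of the maximal minimisers $E^{M}_{\kappa}=\Om^{r}\oplus B_{r}$, $r=\kappa^{-1}$, and then to identify the threshold curvature with $\reach(\R^{2}\setminus\Om)^{-1}$.

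First I would record the elementary features of $\Phi$: by \cref{thm:m_iso} it is well defined, by \cref{cor:nestedness} it is non-decreasing, and $\Phi(\pi R_{\Om}^{2})=R_{\Om}^{-1}$ because the isoperimetric sets of volume $\pi R_{\Om}^{2}$ are the inballs, which minimise $\mathcal{F}_{R_{\Om}^{-1}}$. By the construction in the proof of \cref{thm:m_iso}, $\Phi(V)$ is the unique $\kappa\ge R_{\Om}^{-1}$ with $|E^{m}_{\kappa}|\le V\le|E^{M}_{\kappa}|$ (uniqueness via \cref{rem:nestedness}); and since $\pi R_{\Om}^{2}\le|E^{m}_{\kappa}|$ for every such $\kappa$ (again \cref{cor:nestedness}, as $E^{m}_{\kappa}$ contains an inball), the image of $\Phi$ equals
\[
\Phi([\pi R_{\Om}^{2},|\Om|))=\{\,\kappa\ge R_{\Om}^{-1}:\ |E^{m}_{\kappa}|<|\Om|\,\},
\]
which, by monotonicity of $\kappa\mapsto|E^{m}_{\kappa}|$, is an interval with left endpoint $R_{\Om}^{-1}$. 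So it remains to show that this set is bounded precisely when $\reach(\R^{2}\setminus\Om)>0$, and that its supremum $\bar\kappa$ then satisfies $\bar\kappa^{-1}=\reach(\R^{2}\setminus\Om)$.

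The geometric core, and the step I expect to be hardest, is the equivalence
\[
|\Om^{r}\oplus B_{r}|=|\Om|\qquad\Longleftrightarrow\qquad\reach(\R^{2}\setminus\Om)\ge r
\]
for $0<r\le R_{\Om}$. The implication ``$\Leftarrow$'' is the classical union-of-balls description of domains whose complement has reach at least $r$: every point of $\Om$ then lies in an open ball of radius $r$ contained in $\Om$, so $\Om$ and $\bigcup_{x\in\Om^{r}}B_{r}(x)=\Om^{r}\oplus B_{r}$ differ by a null set. For ``$\Rightarrow$'', the measure identity forces the open set $\Om\setminus(\Om^{r}\oplus B_{r})$ to be empty, hence $\Om\subset\Om^{r}\oplus B_{r}$; one then has to promote this to the pointwise statement $\Om=\bigcup_{x\in\Om^{r}}B_{r}(x)$ (no boundary point of the closed set $\Om^{r}\oplus B_{r}$ can be interior to $\Om$) and deduce the reach bound. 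Here the no-neck hypothesis enters through \cref{prop:struttura_diff}: it makes $\Om^{r}$ a contractible set of reach $\ge r$ carrying no ``handle'' curves and only finitely many $C^{1,1}$ ``tendril'' curves, which is what prevents pathological inner parallel sets from breaking the passage between the measure-theoretic and the topological statement.

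Granting this equivalence, the conclusion is short. Put $\bar r:=\reach(\R^{2}\setminus\Om)$. If $\bar r=0$ then $|E^{M}_{\kappa}|=|\Om^{r}\oplus B_{r}|<|\Om|$ for all $\kappa$, hence $|E^{m}_{\kappa}|<|\Om|$ for all $\kappa$ and the image is $[R_{\Om}^{-1},\infty)$, not a finite interval. If $\bar r>0$, set $\bar\kappa:=\bar r^{-1}$. For $\kappa>\bar\kappa$ the equivalence gives $|E^{M}_{\kappa}|=|\Om|$, and since a tendril of width $2r<2\bar r$ would carry a rounded tip of curvature $r^{-1}>\bar r^{-1}$ incompatible with an interior ball of radius $\bar r$, we get $\G^{1}_{r}=\emptyset$, so $E^{m}_{\kappa}=E^{M}_{\kappa}$ and $|E^{m}_{\kappa}|=|\Om|$; thus $\kappa$ is not in the image. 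For $\kappa<\bar\kappa$, $|E^{M}_{\kappa}|=|\Om^{r}\oplus B_{r}|<|\Om|$ (otherwise the equivalence would force $\reach\ge r>\bar r$), so $|E^{m}_{\kappa}|<|\Om|$ and $\kappa$ is in the image. Hence the image is $[R_{\Om}^{-1},\bar\kappa)$ (modulo the inclusion of the right endpoint in the degenerate case $\G^{1}_{\bar r}\neq\emptyset$) and $\bar\kappa^{-1}=\reach(\R^{2}\setminus\Om)$, which is exactly the interior ball condition of curvature $\bar\kappa$. Conversely, if the image is $[R_{\Om}^{-1},\bar\kappa)$ then $\bar\kappa$ is not in it, so $|E^{m}_{\bar\kappa}|=|\Om|$, hence $|E^{M}_{\bar\kappa}|=|\Om|$ (using $|\de\Om|=0$), and the equivalence gives $\reach(\R^{2}\setminus\Om)\ge\bar\kappa^{-1}$.
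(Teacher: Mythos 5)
Your overall route is the same as the paper's: reduce the claim, via \cref{thm:m_iso}, \cref{cor:nestedness} and \cref{lem:upsc}, to the statement that the image is $[R_\Om^{-1},\bar\kappa)$ precisely when $\Om$ coincides with $\Om^{\bar r}\oplus B_{\bar r}$ for $\bar r=\bar\kappa^{-1}$, and then identify that condition with $\reach(\R^2\setminus\Om)\ge\bar r$. The implication ``interior ball condition $\Rightarrow \Om=\Om^{\bar r}\oplus B_{\bar r}$'' is exactly what the paper delegates to \cite[Lemma~3.1]{Sar21}, and your description of the image as $\{\kappa\ge R_\Om^{-1}:|E^m_\kappa|<|\Om|\}$, together with the monotonicity and semicontinuity arguments, is correct and in fact more explicit than what the paper writes.

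The gap sits in the direction you yourself single out as hardest, and one step there is wrong as stated: $\Om\setminus(\Om^r\oplus B_r)$ is \emph{not} open. The Minkowski sum $\Om^r\oplus B_r=\bigcup_{x\in\Om^r}B_r(x)$ is open, so its complement in $\Om$ is relatively closed, and a relatively closed null set need not be empty; hence $|\Om^r\oplus B_r|=|\Om|$ does not by itself give $\Om\subset\Om^r\oplus B_r$. The parenthetical you add (``no boundary point of $\Om^{r}\oplus B_{r}$ can be interior to $\Om$'') is precisely the assertion that needs proof, and the final passage from the union-of-balls description to the bound on $\reach(\R^2\setminus\Om)$ is only gestured at. The paper short-circuits both difficulties with one observation: since perimeter and volume depend only on the Lebesgue class and $E^M_{\bar\kappa}=\Om^{\bar r}\oplus B_{\bar r}\subset\Om$ has full measure in $\Om$, the set $\Om$ is \emph{itself} a minimizer of $\mathcal{F}_{\bar\kappa}$; applying \cref{thm:main_shape_min} to this minimizer exhibits $\Om$ exactly as a tube $C_\theta\oplus B_{\bar r}$ over a set with $\reach(C_\theta)\ge\bar r$ (via \cref{prop:struttura_diff}), and for such a tube the interior ball condition $\reach(\R^2\setminus\Om)\ge\bar r$ is immediate. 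If you want to keep your measure-theoretic formulation, you need to insert this step (or an equivalent one) to upgrade the volume identity to the exact structural identity before the reach bound can be read off.
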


\begin{proof}
First notice that in general one has
\[
\Om = \lim_{r \to 0}\big(\Om^{r} \oplus B_{r}\big)\,.
\]
By the characterization of minimizers given in \cref{thm:main_shape_min}, and the nestedness granted by \cref{cor:nestedness}, the image $\Phi([\pi R^{2}_{\Om}, |\Om|))$ is a finite interval $[R_\Om^{-1}, \bar \k)$ if and only if
\[
\Om = \lim_{\k \to \bar \k}\big(\Om^{r_\k} \oplus B_{r_\k}\big) = \Om^{r_{\bar \k}} \oplus B_{r_{\bar \k}}\,,
\]
where $r_\k = \k^{-1}$ as usual, and the limit is meant in a set-wise sense. This happens if and only if $\Om$ minimizes $\mathcal{F}_{\bar \k}$. If $\Om$ is such a minimizer, then the claim is immediate. Conversely, an interior ball condition of radius $\bar r=\bar \kappa^{-1}$ implies $\Om = \Om^{\bar r}\oplus B_{\bar r}$, see~\cite[Lemma~3.1]{Sar21}, thus the opposite claim is as well established.
\end{proof}

\subsection{Convexity properties}\label{sec:convexity}

In this section we establish some convexity properties of the isoperimetric profile $\mathcal{J}$ and of its square $\mathcal{J}^2$. The key point is to prove that, for $V\in [\pi R_{\Omega}^{2},|\Omega|]$, $\mathcal{J}(V)$ is the Legendre transform of a convex function.

We remark that the following is the natural extension of Proposition~6.2 in~\cite{LS20}, which allowed us to establish the Legendre duality for the smaller interval $[|E_{h_\Omega}^m|, |\Omega|]$. Exploiting \cref{thm:m_iso}, we can prove that this duality holds true for the larger interval $[\pi R_{\Omega}^{2},|\Omega|]$.

\begin{prop}\label{prop:legendre_transform}
Let $\Om$ be a Jordan domain with $|\de \Om|=0$ and with no necks of radius $r=\kappa^{-1}$ for all $r\in(0, R_\Om]$. Then, the isoperimetric profile $\mathcal{J}(V)$ restricted to $V\in [\pi R^2_\Om,|\Om|)$ is the Legendre transform of 
\[
\mathcal{G}(\k) = -\min_{E\in \mathcal{C}(\k)} \mathcal{F}_\k(E)
\]
restricted to $\k \ge R_\Om^{-1}$, where $\mathcal{C}(\k)$ is defined as in~\eqref{eq:classCk}.
\end{prop}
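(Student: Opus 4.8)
The plan is to establish the Legendre duality by a direct two-sided inequality, exploiting \cref{thm:m_iso} to convert pointwise information about minimizers of $\mathcal{F}_\k$ into information about $\mathcal{J}$. Recall that the Legendre transform of $\mathcal{G}$ is the function $V\mapsto \sup_{\k\ge R_\Om^{-1}}\big(\k V - \mathcal{G}(\k)\big) = \sup_{\k\ge R_\Om^{-1}}\big(\k V + \min_{E\in\mathcal{C}(\k)}\mathcal{F}_\k(E)\big)$, so I must show that for every $V\in[\pi R_\Om^2,|\Om|)$ one has
\[
\mathcal{J}(V) = \sup_{\k\ge R_\Om^{-1}}\Big(\k V + \min_{E\in\mathcal{C}(\k)}\mathcal{F}_\k(E)\Big).
\]
The inequality $\mathcal{J}(V)\ge \k V + \min_{\mathcal{C}(\k)}\mathcal{F}_\k$ for every admissible $\k$ is the easy direction: if $E_V$ realizes $\mathcal{J}(V)$ (which exists and, for $V\ge \pi R_\Om^2$, can be taken in $\mathcal{C}(\k)$ provided $\k\ge R_\Om^{-1}$, since then $|E_V|=V\ge\pi R_\Om^2\ge (n-1)^n\omega_n\k^{-n}$ in the plane reads $V\ge\pi\k^{-2}$ when $\k\ge R_\Om^{-1}$), then $\min_{\mathcal{C}(\k)}\mathcal{F}_\k \le P(E_V)-\k|E_V| = \mathcal{J}(V)-\k V$, which rearranges to the claimed bound. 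Taking the supremum over $\k$ gives one inequality.

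For the reverse inequality, the idea is to produce, for the given $V$, a single curvature $\hat\k$ that makes the bound sharp. This is exactly where \cref{thm:m_iso} enters: it furnishes a $\hat\k\ge R_\Om^{-1}$ and a minimizer $E_{\hat\k}\in\mathcal{C}(\hat\k)$ of $\mathcal{F}_{\hat\k}$ with $|E_{\hat\k}|=V$, and (being a minimizer of $\mathcal{F}_{\hat\k}$ with prescribed volume) $P(E_{\hat\k})=\mathcal{J}(V)$. Then
\[
\mathcal{J}(V) = P(E_{\hat\k}) = \mathcal{F}_{\hat\k}(E_{\hat\k}) + \hat\k|E_{\hat\k}| = \min_{E\in\mathcal{C}(\hat\k)}\mathcal{F}_{\hat\k}(E) + \hat\k V = -\mathcal{G}(\hat\k)+\hat\k V,
\]
so the supremum in the Legendre transform is attained at $\hat\k$ and equals $\mathcal{J}(V)$. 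Combining the two inequalities yields the identity on $[\pi R_\Om^2,|\Om|)$.

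The main obstacle is not any single estimate but making sure the bookkeeping around the class $\mathcal{C}(\k)$ is airtight: one must check that the isoperimetric minimizer $E_V$ is genuinely an admissible competitor for $\mathcal{F}_\k$ over $\mathcal{C}(\k)$ for every $\k\ge R_\Om^{-1}$ used in the supremum (the volume lower bound $\pi\k^{-2}\le V$ holds precisely because $V\ge\pi R_\Om^2$ and $\k^{-1}\le R_\Om$), and conversely that the $\hat\k$ produced by \cref{thm:m_iso} lies in the range $\k\ge R_\Om^{-1}$ over which $\mathcal{G}$ is being transformed. A secondary point is that \cref{thm:m_iso} already builds in the no-neck hypothesis for all $r\in(0,R_\Om]$, which is precisely the standing assumption here, so no extra regularity work is needed — the proof is essentially a repackaging of \cref{thm:m_iso} in the language of convex duality, and it should be short. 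One should also remark that $\mathcal{G}$ is nondecreasing and convex on $[R_\Om^{-1},\infty)$ (convexity being automatic as an infimum-type construction, or verifiable from monotonicity of $\min\mathcal{F}_\k$ in $\k$), which ensures the Legendre transform behaves as expected and is the starting point for the convexity statements of \cref{thm:convexity_J}.
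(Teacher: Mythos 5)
Your proof is correct and follows essentially the same route as the paper: the inequality $\mathcal{G}^*(V)\le\mathcal{J}(V)$ comes from testing $\min_{\mathcal{C}(\k)}\mathcal{F}_\k$ with an isoperimetric set of volume $V$ (admissible for every $\k\ge R_\Om^{-1}$ since $V\ge\pi R_\Om^2\ge\pi\k^{-2}$), and the reverse inequality comes from the curvature $\hat\k$ supplied by \cref{thm:m_iso}. The only caveat concerns your closing aside: convexity of $\mathcal{G}$ is \emph{not} automatic from its being built as an infimum of affine functions, because the class $\mathcal{C}(\k)$ varies with $\k$; the paper handles this by observing, via \cref{thm:main_shape_min}, that every minimizer contains a ball of radius $R_\Om$, so the minimization may be carried out over the fixed class $\mathcal{C}(R_\Om^{-1})$ for every $\k$ --- though this point is immaterial for the identity actually asserted in the proposition, since $\mathcal{G}^*$ is convex regardless.
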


\begin{proof}
The convexity of $\mathcal{G}$ follows as in the proof of~\cite[Proposition~6.2]{LS20}. Indeed, thanks to the geometric characterization of the minimizers obtained in \cref{thm:main_shape_min}, and for any admissible choice $\kappa \ge R_\Om^{-1}$, all minimizers of $\mathcal{F}_\kappa$ contain at least a ball of radius $R_\Om$. Therefore, for any choice of $\kappa$, one can minimize over the smaller class of competitors $\mathcal{C}(R_\Om^{-1})$ in place of the larger one $\mathcal{C}(\kappa)$ without affecting the value of the minimum or the minimizers.\par
We are left to show that the Legendre transform $\mathcal{G}^*$ of $\mathcal{G}$ coincides with the isoperimetric profile $\mathcal{J}$ on the claimed interval. By definition the Legendre transform is
\begin{align}
\mathcal{G}^*(V) &:= \sup_{\k \ge R_\Om^{-1}} \{\,\k V -\mathcal{G}(\k)\,\} \label{def:legendre_transf}\\
&= \sup_{\k \ge R_\Om^{-1}} \big\{\,\k V +\min_{E\in C(R_\Om^{-1})}\{P(E)-\k|E|\,\} \big\}\,,\nonumber
\end{align}
and we refer the interested reader to~\cite[Part~V, Chap.~26]{Rock15book} for the basic definitions and results on the Legendre transform. By \cref{thm:m_iso} for all $V\ge |B_R|$ there exist (a unique) $\bar\k\ge R_\Om^{-1}$ and a minimizer $E_{\bar \k}$ of $\cF_{\bar \k}$ with $|E_{\bar \k}|=V$ and such that $\mathcal{J} (V)= P(E_{\bar \k})$. Hence, on the one hand
\begin{align*}
\mathcal{G}^*(V) &\ge \bar \k V + \min_{E\in C(R_\Om^{-1})}\{\,P(E)-\bar \k|E|\,\} \\
&= \bar \k V + P(E_{\bar \k}) - \bar \k |E_{\bar \k}|=  P(E_{\bar \k}) = \mathcal{J} (V).
\end{align*}
On the other hand, for all $\k$ we have
\begin{align*}
\k V -\mathcal{G}(\k) &= \k V +\min_{E\in C(R_\Om^{-1})} \cF_\k(E) \\
&\le \k V + P(E_{\bar \k}) -\k |E_{\bar \k}| = P(E_{\bar \k}).
\end{align*}
Thus, by this inequality and~\eqref{def:legendre_transf} one has
\begin{align*}
\mathcal{G}^*(V) &\le P(E_{\bar \k}) = \mathcal{J} (V),
\end{align*}
and the claim follows at once.
\end{proof}

\begin{rem}
We remark that the isoperimetric profile $\mathcal{J}$ is differentiable at $V\in (0,|\Om|)$, and its derivative is given by the curvature of $\de E\cap \Om$, being $E$ any isoperimetric set of volume $V$. For volumes less than or equal to $\pi R^2_\Om$ it is an immediate computation. For volumes above this threshold it follows from the fact that $\mathcal{J}$ coincides with the Legendre transform of $\mathcal{G}$. To see this, we first note that $\mathcal{G} = (\mathcal{G}^*)^*$, since $\mathcal{G}$ is convex and lower semicontinuous. Second, from the equalities $\mathcal{G} = (\mathcal{G}^*)^*$ and $\mathcal{G}^* = \mathcal{J}$  we have
\begin{equation}\label{eq:doppia_trasformata}
\mathcal{G}(\overline{\kappa}) = \sup_{V \ge \pi R_\Om^2} \{\overline{\kappa} V - \mathcal{J}(V)\} = \overline{\kappa} \overline V - \mathcal{J}(\overline V)\,,
\end{equation}
for some (possibly non unique) $\overline{V}$. The equalities in~\eqref{eq:doppia_trasformata} imply that $\overline{\kappa}$ belongs to the subdifferential $\de \mathcal{J}(\overline{V})$ because
\[
\overline{\kappa} \overline{V} - \mathcal{J}(\overline{V}) \ge \overline{\kappa}V' - \mathcal{J}(V')
\]
for all $V'\ge \pi R_{\Omega}^{2}$. Now one concludes, since \cref{thm:m_iso} implies that for any volume $\overline{V}$ there exists a unique curvature $\kappa$, (and necessarily $\kappa = \overline{\kappa}$), for which~\eqref{eq:doppia_trasformata} is attained by $\overline{V}$. Thus the subdifferential $\de \mathcal{J}(\overline{V})$ reduces to the single element $\overline \kappa$, and being $\mathcal{J}$ convex this means that it is differentiable at $\overline{V}$ with derivative given by $\overline \kappa$. We also note that the link between the derivative of the isoperimetric profile and the (mean) curvature of the (internal) boundary of the minimizer is a classical fact, see for instance~\cite{Ros05}.
\end{rem}

\begin{rem}\label{rem:hp_n_dim}
By closely following the proof of \cref{prop:legendre_transform}, one notices that the key ingredient is to find a minimizer of $\mathcal{F}_\k$ for every choice of volume above a certain threshold $\overline{V}$, in this case $\overline{V}\ge \pi R^2_\Om$.
\end{rem}

\begin{rem}
Thanks to our geometric characterization, one can still obtain convexity of $\mathcal{J}$ on subintervals of $[\pi R^2_\Om, |\Om|]$ for sets $\Om$ which have no necks of radius $r$ with $r\in [r_1, r_2]$. Indeed, \cref{thm:m_iso} can be applied for all volumes $V\in [|E^m_{r_2^{-1}}|, |E^M_{r_1^{-1}}|]$, finding then a suitable curvature $\bar \k \in [r_2^{-1},r_1^{-1}]$. Thus, one obtains convexity of $\mathcal{J}$ on such an interval of volumes by following the proof of \cref{prop:legendre_transform}.
\end{rem}

\begin{rem}
It is easy to verify that whenever $\Gamma^1_r\neq \emptyset$, the isoperimetric profile $\mathcal{J}$ is linear in the interval of volumes $[|E^m_{r^{-1}}|, |E^M_{r^{-1}}|]$, and viceversa. There exist sets with no necks of radius $r$ for all $r\le R_\Om$ that have such a linear growth on countably many intervals (of volume), i.e., such that $\Gamma^1_r\neq \emptyset$ for countably many $r$, see for instance~\cite[Example~5.8 and Figure~4]{LS20}.
\end{rem}

We are now ready to prove \cref{thm:convexity_J} which establishes the convexity properties of $\mathcal{J}$ and of $\mathcal{J}^2$ .

\begin{proof}[Proof of \cref{thm:convexity_J}]
On the one hand, as the Legendre transform maps convex maps into convex maps, one immeditaly obtains by \cref{prop:legendre_transform} the convexity of $\mathcal{J}$ for $V\ge \pi R^2_\Om$. Therefore $\mathcal{J}^2$ is convex as well on such interval. On the other hand, for volumes $V$ up to $\pi R^2_\Om$, any ball of volume $V$ is a minimizer. An immediate computation gives
\begin{align*}
\mathcal{J}^2(V)= 4\pi V\,,\qquad \text{for } V\in [0, \pi R^2_\Om]\,,
\end{align*}
which is linear and thus convex. We are then left to show that the piecewise convex function $\mathcal{J}^2$ is globally convex. First, notice that $\mathcal{J}$ is continuous and so it is $\mathcal{J}^2$. Second, recall that a function $f:[a,b]\to \mathbb{R}$ is convex if and only $f'_-(x)\le f'_+(x)$ for all $x\in (a,b)$. To conclude the claim, it is enough to show that $(\mathcal{J}^{2})'_{-}(V_{0}) \le (\mathcal{J}^{2})'_{+}(V_{0})$ at $V_{0}=\pi R^2_\Om$. Clearly both the left and right derivatives are well defined, and one trivially has
\[
(\mathcal{J}^2)'_{-} (V_0)= 4\pi\,.
\]
Let us now denote by $\mathcal{I}(V)$ the isoperimetric profile of $\mathbb{R}^2$, i.e.,
\[
\mathcal{I}(V):=\inf\{\, P(E)\,:\, |E|=V\,, E\subset \mathbb{R}^2 \,\}\,,
\]
for which we know $\mathcal{I}^2(V) = 4\pi V$. By the isoperimetric inequality we have
\begin{align*}
\mathcal{I}^2(V) \le \mathcal{J}^2(V)\,,\qquad \forall\, V>0\,.
\end{align*}
The previous inequality, paired with $\mathcal{I}^2(V_{0}) = \mathcal{J}^2(V_{0})$, immediately implies
\[
(\mathcal{J}^2)'_{-}(V_{0}) = (\mathcal{I}^2)'(V_{0}) \le  (\mathcal{J}^2)'_{+}(V_{0})\,, 
\]
whence the claim follows.
\end{proof}

\subsection{Few words on dimension \texorpdfstring{$n$}{n}}\label{sec:dim_n}

As seen in the previous section, \cref{thm:m_iso} provides solutions of the isoperimetric problem by solving a (partially) unconstrained problem, i.e., the minimization of $\mathcal{F}_\kappa$ on the class $\mathcal{C}(\k)$. In the planar setting of Jordan domains $\Om$ satisfying a no neck property, a full geometric characterization of minimizers is provided. Unfortunately, in higher dimension $n\ge 3$, we cannot expect to find such a precise characterization of minimizers of $\mathcal{F}_\kappa$ for a given, bounded, open set $\Om\subset \R^{n}$. Nevertheless, as noticed in \cref{rem:hp_n_dim}, the convexity of the profile above some threshold $\overline{V}$ would simply follow by knowing that among the minimizers of $\mathcal{J}$ for volumes greater than $\overline{V}$, one can find minimizers of $\mathcal{F}_\kappa$ for suitable $\kappa$.

This program has been partially carried on in~\cite{ACC05}, for convex sets $\Om\subset \mathbb{R}^n$ of class $C^{1,1}$ and with $V\ge |E_{h_\Om}|$, being $E_{h_\Om}$ the unique Cheeger set of $\Om$, as we hereafter recall.

\begin{thm}[Theorem~11 of~\cite{ACC05}]\label{thm:ACC05}
Let $\Om\subset \mathbb{R}^n$ be convex and of class $C^{1,1}$. Then, for each volume $|E_{h_\Om}|\le V \le |\Om|$ there exists $\kappa\in [h_{\Om}, +\infty)$ such that the unique minimizer $E_\kappa$ of $\mathcal{F}_\kappa$ satisfies
\begin{align*}
|E_\kappa|= V\,, && \mathcal{J}(V) = P(E_\kappa)\,.
\end{align*} 
\end{thm}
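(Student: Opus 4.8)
The plan is to exhibit, for every $V\in[|E_{h_\Om}|,|\Om|]$, a curvature $\kappa\ge h_\Om$ and a minimizer $E_\kappa$ of $\mathcal{F}_\kappa$ over Borel subsets of $\Om$ with $|E_\kappa|=V$; the identity $P(E_\kappa)=\mathcal{J}(V)$ is then automatic, by the comparison already used in the proof of \cref{thm:m_iso} (for any competitor $F\subset\Om$ with $|F|=V$ one has $P(F)-\kappa V=\mathcal{F}_\kappa[F]\ge\mathcal{F}_\kappa[E_\kappa]=P(E_\kappa)-\kappa V$). So everything reduces to proving that the achievable volumes $\{\,|E_\kappa|:\kappa\ge h_\Om\,\}$ fill the interval $[|E_{h_\Om}|,|\Om|]$.

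First I would collect the soft facts, valid for any bounded $\Om$. A nontrivial minimizer $E_\kappa$ exists for every $\kappa\ge h_\Om$ by the direct method (boundedness of $\Om$ yields a uniform perimeter bound along a minimizing sequence, and $\inf\mathcal{F}_\kappa\le 0$ rules out the empty set; at $\kappa=h_\Om$ the minimizers are exactly the Cheeger sets). Summing the two minimality inequalities for $\kappa_1<\kappa_2$ gives the monotonicity $|E_{\kappa_1}|\le|E_{\kappa_2}|$. The value $|E_{h_\Om}|$ is attained at $\kappa=h_\Om$ since a convex $C^{1,1}$ body has a unique Cheeger set by \cite{AC09}, while the value $|\Om|$ is attained by $E=\Om$ itself: a $C^{1,1}$ convex body satisfies an interior ball condition, hence it minimizes $\mathcal{F}_\kappa$ for $\kappa$ large enough. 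Finally $\kappa\mapsto\inf\mathcal{F}_\kappa$ is concave, hence continuous.

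The crux, and the only place where convexity and $C^{1,1}$-regularity of $\Om$ are really used, is the uniqueness of $E_\kappa$ for each fixed $\kappa$. I would first show every minimizer is convex: its free boundary $\partial E_\kappa\cap\Om$ is, by the regularity theory for $(\Lambda,r_0)$-minimizers of the perimeter, a smooth constant-mean-curvature hypersurface (mean curvature $\kappa/(n-1)$) meeting $\partial\Om$ tangentially, and the $C^{1,1}$ assumption supplies the boundary regularity needed to apply the maximum principle together with Korevaar's comparison principle \cite{Kor83a,Kor83b}, which propagates convexity from $\Om$ to $E_\kappa$. (Replacing $E_\kappa$ by its convex hull is not enough, since a minimizer could a priori be disconnected, in which case its convex hull may have larger perimeter.) Once all minimizers are convex, uniqueness follows from the same comparison principle: two distinct minimizers $E_1\ne E_2$ would produce, through the closedness of the class of minimizers under finite union and intersection (valid for $\kappa\ge h_\Om$), four convex minimizers $E_1\cap E_2\subsetneq E_i\subsetneq E_1\cup E_2$ whose free boundaries are CMC hypersurfaces of identical mean curvature, ordered by inclusion and mutually touching, against the strong maximum principle. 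I expect this to be the main obstacle; the rest is soft.

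With uniqueness in hand, the monotone map $\kappa\mapsto|E_\kappa|$ is in fact continuous: if $\kappa_j\to\kappa$ then, up to a subsequence, $E_{\kappa_j}\to E$ in $L^1$, and lower semicontinuity of the perimeter together with continuity of $\kappa\mapsto\inf\mathcal{F}_\kappa$ forces $E$ to minimize $\mathcal{F}_\kappa$, so $E=E_\kappa$ and $|E_{\kappa_j}|\to|E_\kappa|$. A continuous nondecreasing function which equals $|E_{h_\Om}|$ at $\kappa=h_\Om$ and reaches $|\Om|$ for $\kappa$ large covers the whole interval $[|E_{h_\Om}|,|\Om|]$ by the intermediate value theorem. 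This provides, for every admissible $V$, the pair $(\kappa,E_\kappa)$ with $|E_\kappa|=V$, and the comparison recalled at the outset yields $P(E_\kappa)=\mathcal{J}(V)$, as desired.
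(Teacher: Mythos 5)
The paper offers no proof of this statement at all: it is imported verbatim as Theorem~11 of \cite{ACC05}, and the only thing the authors say about its proof is that it ``heavily relies on the comparison principle and Korevaar's concavity maximum principle'' applied to $\mathcal{F}_\kappa$ for $\kappa\ge h_\Om$. Your sketch reconstructs exactly that route --- soft existence, monotonicity of $\kappa\mapsto|E_\kappa|$ from summing the two minimality inequalities, uniqueness at $\kappa=h_\Om$ from \cite{AC09}, convexity and uniqueness of $E_\kappa$ via Korevaar \cite{Kor83a,Kor83b} and the maximum principle, then continuity plus the intermediate value theorem --- so it is essentially the same approach as the cited source, and you have correctly isolated the convexity/uniqueness step as the only place where the convexity and $C^{1,1}$ regularity of $\Om$ enter. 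The one part of your sketch that is genuinely thinner than what would be needed to make this self-contained is precisely that step: the ``four convex minimizers ordered by inclusion, against the strong maximum principle'' argument needs the two free boundaries to actually touch at a common interior point with a common tangent before the strong maximum principle applies, and arranging that contact (or arguing instead via the equality case in the submodularity inequality $P(E_1\cup E_2)+P(E_1\cap E_2)\le P(E_1)+P(E_2)$) is where the real work of \cite{ACC05} lies; but since the paper itself takes this as a black box, your sketch is at the right level of detail for the role the theorem plays here.
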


Thanks to \cref{thm:ACC05} we immediately obtain the following convexity property of the isoperimetric profile $\mathcal{J}$ for $\Om\subset \R^{n}$ convex, bounded and of class $C^{1,1}$, in any dimension $n\ge 2$.

\begin{thm}
Let $n\ge 2$ and let $\Om\subset \mathbb{R}^n$ be convex, bounded, and of class $C^{1,1}$. Then the isoperimetric profile $\mathcal{J}$ is convex in $[|E_{h_\Om}|, |\Om|]$.
\end{thm}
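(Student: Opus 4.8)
The plan is to reproduce, in the $n$-dimensional convex $C^{1,1}$ setting, the Legendre-duality argument of \cref{prop:legendre_transform} and the first part of the proof of \cref{thm:convexity_J}, with \cref{thm:ACC05} taking the role played there by \cref{thm:m_iso}. For $\k \ge h_\Om$ set
\[
\mathcal{G}(\k) := -\min\{\,\mathcal{F}_\k[E]\,:\,E\subset\Om\text{ Borel}\,\} = \sup_{E\subset\Om}\big\{\,\k|E|-P(E)\,\big\}\,,
\]
the minimum being attained by the standard compactness and lower semicontinuity argument (as in \cref{prop:existence}; existence of $\mathcal{F}_\k$-minimizers for $\k\ge h_\Om$ is classical). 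As a pointwise supremum of the affine functions $\k\mapsto \k|E|-P(E)$, the map $\mathcal{G}$ is convex and lower semicontinuous on $[h_\Om,+\infty)$; in any case the conjugate of an arbitrary function is automatically convex, which is all that the final step needs.

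I would then check that the Legendre transform $\mathcal{G}^*(V):=\sup_{\k\ge h_\Om}\{\,\k V-\mathcal{G}(\k)\,\}$ coincides with $\mathcal{J}(V)$ for every $V\in[|E_{h_\Om}|,|\Om|)$, essentially verbatim as in \cref{prop:legendre_transform}. Fix such a $V$ and, by \cref{thm:ACC05}, pick $\bar\k\ge h_\Om$ and a minimizer $E_{\bar\k}$ of $\mathcal{F}_{\bar\k}$ with $|E_{\bar\k}|=V$ and $P(E_{\bar\k})=\mathcal{J}(V)$. Then $-\mathcal{G}(\bar\k)=\mathcal{F}_{\bar\k}[E_{\bar\k}]=P(E_{\bar\k})-\bar\k V$, whence $\bar\k V-\mathcal{G}(\bar\k)=P(E_{\bar\k})=\mathcal{J}(V)$, so $\mathcal{G}^*(V)\ge\mathcal{J}(V)$. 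Conversely, for every $\k\ge h_\Om$ the set $E_{\bar\k}$ is an admissible competitor for $\mathcal{F}_\k$, so
\[
\k V-\mathcal{G}(\k)=\k V+\min_{E\subset\Om}\mathcal{F}_\k[E]\le \k V+P(E_{\bar\k})-\k|E_{\bar\k}|=P(E_{\bar\k})=\mathcal{J}(V)\,,
\]
and taking the supremum over $\k$ gives $\mathcal{G}^*(V)\le\mathcal{J}(V)$. Hence $\mathcal{J}=\mathcal{G}^*$ on $[|E_{h_\Om}|,|\Om|)$, and being a conjugate function it is convex there; convexity up to and including $V=|\Om|$ then follows from the continuity of $\mathcal{J}$ at $|\Om|$ (equivalently, a convex function on a half-open interval extends convexly to its closure), exactly as in the proof of \cref{thm:convexity_J}.

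The main point — really a limitation rather than an obstacle — is that \cref{thm:ACC05} only supplies, for each $V\ge |E_{h_\Om}|$, the matching pair $(\bar\k,E_{\bar\k})$ with $\bar\k\ge h_\Om$: in dimension $n\ge 3$ there is no geometric characterization of the minimizers of $\mathcal{F}_\k$ for $(n-1)R_\Om^{-1}\le\k<h_\Om$, so the duality cannot be pushed below $|E_{h_\Om}|$ and one cannot lower the threshold to $\omega_n R_\Om^n$ as in the planar case. As observed in \cref{rem:hp_n_dim}, the only ingredient the argument actually consumes is the \emph{existence}, for each prescribed volume above the threshold, of an $\mathcal{F}_\k$-minimizer realizing that volume and that value of $\mathcal{J}$; neither uniqueness of minimizers nor any regularity beyond what \cref{thm:ACC05} already grants plays a role in this convexity statement, and the whole proof is then the soft Legendre-transform machinery already set up for $n=2$.
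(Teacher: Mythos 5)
Your proposal is correct and follows essentially the same route as the paper: the paper's proof of this theorem is literally ``immediate consequence of \cref{rem:hp_n_dim} and \cref{thm:ACC05}'', where \cref{rem:hp_n_dim} points to rerunning the Legendre-duality argument of \cref{prop:legendre_transform} with \cref{thm:ACC05} supplying, for each $V\ge|E_{h_\Om}|$, the matching pair $(\bar\k,E_{\bar\k})$. You have simply written out the details that the paper leaves implicit, and your closing observation about why the threshold cannot be lowered below $|E_{h_\Om}|$ in dimension $n\ge 3$ matches the discussion in \cref{sec:dim_n}.
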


\begin{proof}
Immediate consequence of \cref{rem:hp_n_dim} and \cref{thm:ACC05}.
\end{proof}

The proofs in~\cite{ACC05} which lead to \cref{thm:ACC05} heavily rely on the comparison principle and Korevaar's concavity maximum principle (see~\cite{Kor83a, Kor83b}) to study $\mathcal{F}_\kappa$, with $\kappa\ge h_\Om$. It would be of great interest to see if the approach of~\cite{ACC05} can be extended to curvatures $\kappa\in [R_{\Om}^{-1},h_\Om)$ by considering the same functional $\mathcal{F}_\kappa$ with the additional lower bound to the volume, just as we did here, in order to prevent the empty set to be a minimizer. Were it possible to extend their approach to $\kappa \in [R_\Om^{-1}, h_\Om)$, one would easily find out that $\mathcal{J}^\frac{n}{n-1}$ is convex, just by repeating the arguments of our \cref{prop:legendre_transform} and \cref{thm:convexity_J}.

\section{Examples}\label{sec:examples}

In this final section we explicitly compute and plot the isoperimetric profile for two class of sets: rectangles, and cross-shaped sets, which are non convex.

\subsection{Rectangles}

Take a rectangle which, up to scaling, has a fixed side of length $2$ and the other side of length $L\ge 2$, and denote it as $\mathcal{R}_L$. The inradius of $\mathcal{R}_L$ is $1$. Notice that for all $r< 1$ the set $\Gamma^1_r$ is empty, therefore for any prescribed curvature $\kappa> 1$ there exists a unique minimizer of $\mathcal{F}_\k$. We have the following
\begin{itemize}
\item[i)] for volumes $V\le \pi$, the minimizer is a ball of radius $\sqrt{\pi^{-1}V}$;
\item[ii)] for volumes $V\in (\pi, \pi+  2(L-2)]$, any minimizer is the convex hull of two balls of radius $1$;
\item[iii)] for volumes $V>  \pi +  2(L-2)$, there exists $\kappa > 1$ such that $E_\kappa = (\mathcal{R}_L)_{\kappa^{-1}} \oplus B_{\kappa^{-1}}$ is the unique minimizer.
\end{itemize}

An easy computation yields
\[
\mathcal{J}(V) =
\begin{cases}
2\sqrt{\pi V}\,, & V\in [0, \pi]\\
\pi + V\,, &V\in (\pi, \pi + 2(L-2)]\\
-2\sqrt{4-\pi}\sqrt{2L-V}+2(2+L)\,, &V\in (\pi + 2(L-2), 2L]
\end{cases}
\]
By \cref{thm:convexity_J} we already know that $\mathcal{J}^2$ is globally convex. As a visual confirmation of this fact, in \cref{fig:plot_J_R} (resp., \cref{fig:plot_J^2_R}) one can see the plot of $\mathcal{J}$ (resp.,~$\mathcal{J}^2$) for $\mathcal{R}_4$.

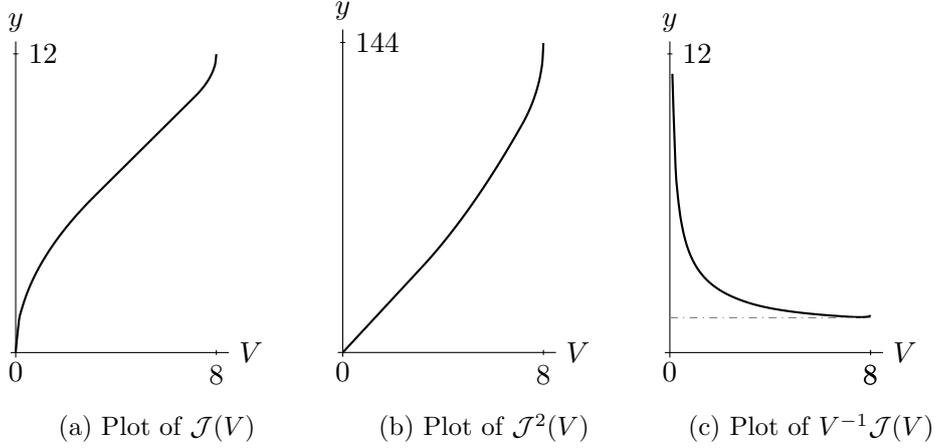
\begin{figure}[t]
\makebox[\linewidth][c]{
\begin{subfigure}[b]{.33\linewidth}
\begin{tikzpicture}[x=.33cm, y=.33cm]
\draw[-] (-0.2, 0) -- (8.5, 0) node[right] {$V$};
\draw[-] (0,-0.2) -- (0, 12.5) node[above] {$y$};
\draw[domain=0:pi, smooth, variable=\x, thick] plot ({\x}, {(2*pi^(0.5)*\x^(0.5))});
\draw[domain=pi:pi+4, smooth, variable=\x, thick] plot ({\x}, {pi+\x});
\draw[domain=pi+4:8, smooth, variable=\x, thick] plot ({\x}, {12-2*(4-pi)^(0.5)*(8-\x)^(0.5)});
\draw (-0.1,12) -- (0.1,12) node[right] {$12$};
\draw (8,0.1) -- (8,-0.1) node[below] {$8$};
\node[below] at (0,0) {$0$};
\end{tikzpicture}
\caption{Plot of $\mathcal{J}(V)$\label{fig:plot_J_R}}
\end{subfigure}
\begin{subfigure}[b]{.33\linewidth}
\begin{tikzpicture}[x=.33cm, y=.33cm, yscale=0.0865]
\draw[-] (-0.2, 0) -- (8.5, 0) node[right] {$V$};
\draw[-] (0,-2) -- (0, 150) node[above] {$y$};
\draw[domain=0:pi, smooth, variable=\x, thick] plot ({\x}, {4*pi*\x});
\draw[domain=pi:pi+4, smooth, variable=\x, thick] plot ({\x}, {(pi+\x)^2});
\draw[domain=pi+4:8, smooth, variable=\x, thick] plot ({\x}, {(12-2*(4-pi)^(0.5)*(8-\x)^(0.5))^2});
\draw (-0.1,144) -- (0.1,144) node[right] {$144$};
\draw (8,1.2) -- (8,-1.2) node[below] {$8$};
\node[below] at (0,0) {$0$};
\end{tikzpicture}
\caption{Plot of $\mathcal{J}^2(V)$\label{fig:plot_J^2_R}}
\end{subfigure}
\begin{subfigure}[b]{.33\linewidth}
\begin{tikzpicture}[x=.33cm, y=.33cm, spy using outlines=
	{circle, magnification=4, connect spies}]
\draw[-] (-0.2, 0) -- (8.5, 0) node[right] {$V$};
\draw[-] (0,-0.2) -- (0, 12.5) node[above] {$y$};
\draw[domain=0.1:pi, smooth, variable=\x, thick] plot ({\x}, {(2*pi^(0.5)/\x^(0.5))});
\draw[domain=pi:pi+4, smooth, variable=\x, thick] plot ({\x}, {(pi+\x)/\x});
\draw[domain=pi+4:8, smooth, variable=\x, thick] plot ({\x}, {(12-2*(4-pi)^(0.5)*(8-\x)^(0.5))/\x});
\draw[domain=0:8, smooth, variable=\x, dash dot, thin, gray] plot ({\x}, {(4-pi)/(2*(3-(1+2*pi)^(0.5)))-.025});
\draw (-0.1,12) -- (0.1,12) node[right] {$12$};
\draw (8,0.1) -- (8,-0.1) node[below] {$8$};
\draw (8,0.1) -- (8,-0.1) node[below] {$8$};
\node[below] at (0,0) {$0$};
\end{tikzpicture}
\caption{Plot of $V^{-1}\mathcal{J}(V)$\label{fig:plot_h_R}}
\end{subfigure}
}
\caption{Plots of $\mathcal{J}(V)$, $\mathcal{J}^2(V)$ and $V^{-1}\mathcal{J}(V)$ for $\mathcal{R}_4$.}
\end{figure}

Finally, we notice that if one were interested in computing the Cheeger constant $h(\mathcal{R}_L)$, one would be led to look for critical points of $V^{-1}\mathcal{J}(V)$, whose plot is shown in \cref{fig:plot_h_R}, for $\mathcal{R}_4$. By~\cite[Theorem~1.4]{LNS17} the inner Cheeger set of $\mathcal{R}_L$ cannot be empty, thus the minimum of $V^{-1}\mathcal{J}(V)$ occurs in the interval $(2L + \pi -4,2L)$. The Cheeger constant of $h(\mathcal{R}_L)$ is by now well-known to be
\[
h(\mathcal{R}_L)= \frac 12 \cdot \frac{4-\pi}{\frac L2 + 1 - \sqrt{(\frac L2 - 1)^2 + \pi \frac L2}}\,,
\]
see for instance~\cite[Section~3.1]{PS17} or the discussion following~\cite[Theorem~3]{KL06} together with the correction done in~\cite[Open problem~1]{Kaw16}. Rationalizing the above ratio, one can easily see that $h(\mathcal{R}_L)$ converges to $1$ as $L\to +\infty$, and precise asymptotic estimates are given in~\cite[Theorem~3.2]{LP16}. Therefore, the length of the interval $[|E_{h_\Om}|, |\mathcal{R}_L|]$ converges to $4-\pi$, while the length of the interval $[\pi, |E_{h_\Om}|]$ diverges. This example shows that with \cref{thm:main_shape_min} we can cover a range of volumes, $[\pi, |\mathcal{R}_L|]$, that can be significantly larger than the range covered by~\cite[Theorem~2.3]{LS20}, $[|E_{h_\Om}|, |\mathcal{R}_L|]$. This can also happen for non convex sets, as the next example shows.

\subsection{Cross-shaped sets}

We consider now a cross-shaped domain $\mathcal{X}_L$, given by the union of two rectangles $\mathcal{R}_L$ in such a way that they share the barycenter and their boundaries meet orthogonally, refer also to \cref{fig:cross-shaped}. 
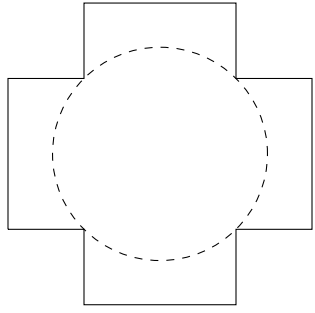
\begin{figure}
\begin{tikzpicture}
\draw (0,0) -- (1,0) -- (1,-1) -- (3, -1) -- (3,0) -- (4,0) -- (4,2) -- (3,2) -- (3, 3) -- (1,3) -- (1,2) -- (0,2) -- (0,0);
\draw[dashed] (2,1) circle (1.414cm);
\end{tikzpicture}
\caption{The cross-shaped set $\chi_L$, for $L=4$, and its inball.}
\label{fig:cross-shaped}
\end{figure}
Assuming that $L\ge 4$, their intersection is a square of side $2$. Hence, it is immediate to see that $\inr(\mathcal{X}_L)=\sqrt{2}$. Then, we are in the following situation:
\begin{itemize}
\item[i)] for volumes $V\le 2\pi$, the minimizer is a ball of radius $\sqrt{\pi^{-1}V}$;
\item[ii)] for volumes $V\in (2\pi, 2\pi+  4]$, any minimizer is given by the intersection of the two rectangles (a square of side $2$) topped by four circular segments of radius $r$ decreasing from $\sqrt{2}$ to $1$ as $V$ increases;
\item[iii)] for volumes $V \in (2\pi +4, 4L+2\pi -12]$, a minimizer is the suitable union of balls of radius $1$, and it is not difficult to show that there exists a unique one which is central-symmetric;
\item[iv)] for volumes $V \in (4L+2\pi -12, 4L -4]$, there exists $\kappa > 1$ such that $E_\kappa = (\mathcal{X}_L)^{\kappa^{-1}} \oplus B_{\kappa^{-1}}$ is the unique minimizer.
\end{itemize}

The corresponding profile is given by
\[
\mathcal{J}(V) =
\begin{cases}
2\sqrt{\pi V}\,, & V\in [0, 2\pi]\\
8r(V)\arcsin\left(\frac{1}{r(V)} \right)\,, &V\in (2\pi, 2\pi+4]\\
2\pi+V-4\,, &V\in (2\pi+4, 4L+2\pi -12]\\
-2\sqrt{2}\sqrt{4-\pi}\sqrt{4L-4-V}+4L\,, &V\in (4L+2\pi -12, 4L-4]
\end{cases}
\]
where $r(V)$ is the unique solution\footnote{Uniqueness follows from the fact that $V(r)$ is strictly increasing, thus the inverse function exists.} in $[1,\sqrt{2}]$ of the equation
\[
V=4\left(1+r^2\arcsin\left(\frac{1}{r}\right) - \sqrt{r^2-1} \right)\,.
\]
In \cref{fig:plots_X} the plots of $\mathcal{J}(V)$, $\mathcal{J}^2(V)$ and of $V^{-1}\mathcal{J}(V)$ are shown for the choice $L=4$. Regarding the Cheeger constant, we recall that the inner Cheeger formula tells us that $r=h_\Om^{-1}$ satisfies the equality
\[
|\Om^r| = \pi r^2\,,
\]
since for $r\ge 1$ we have $|\Om^r| \ge 4-\pi$, and it is immediate to see that the minimum of $V^{-1}\mathcal{J}(V)$ always lies in the fourth interval of definition of $\mathcal{J}$. Thus, just as for the rectangles, one can see that as $L\to +\infty$ the length of the interval $[|E_{h_\Om}|, |\mathcal{X}_L|]$ is bounded from above by $8-2\pi$ (actually, it converges to), while the length of the interval $[2\pi, |E_{h_\Om} |]$ diverges, showing again how relevant the extension provided by \cref{thm:main_shape_min} can be, with respect to previously known results.
\begin{figure}[t]%
\makebox[\linewidth][c]{
\begin{subfigure}[b]{.33\linewidth}
\vspace{15pt}
\begin{overpic}[width=1\textwidth]
{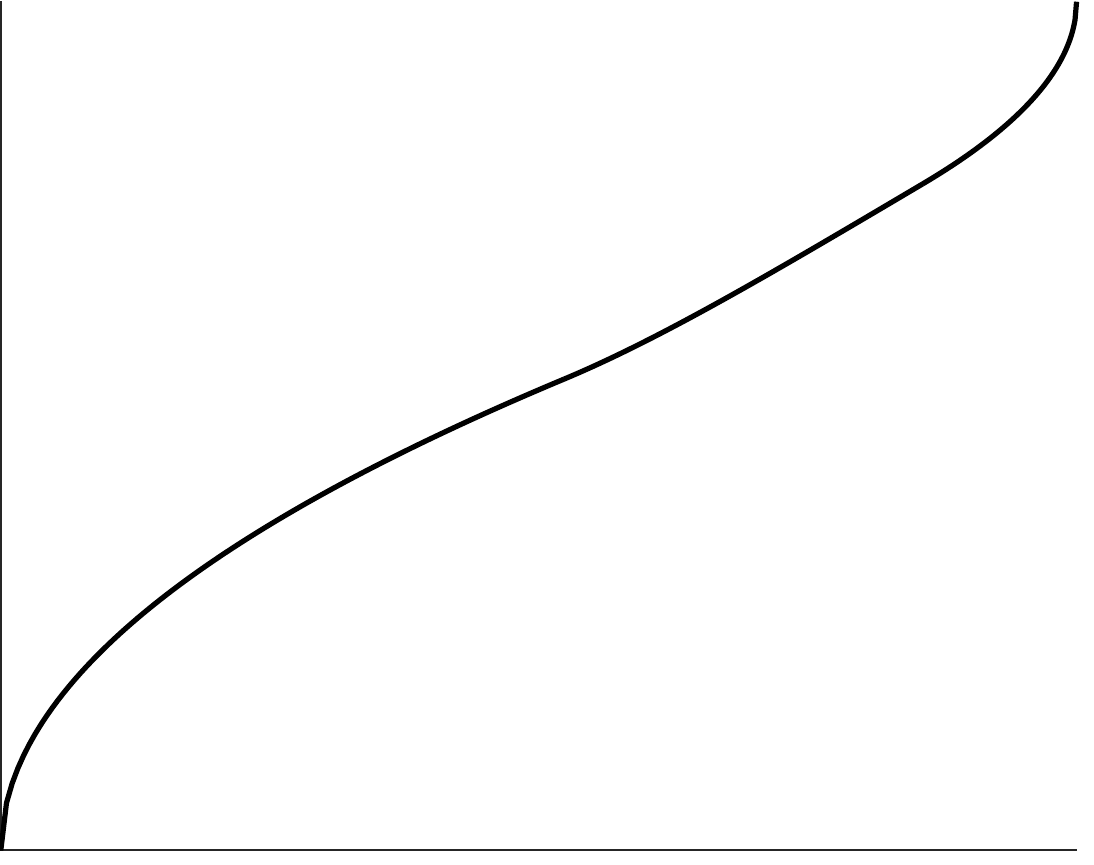}
\put (-2, 85) {$y$}
\put (2, 70) {$16$}
\put (-2, -7) {$0$}
\put (90, -7) {$12$}
\put (105, -2) {$V$}
\end{overpic}
\vspace{0pt}
\caption{Plot of $\mathcal{J}(V)$ \label{fig:plot_J_X}}
\end{subfigure}
\qquad
\qquad
\begin{subfigure}[b]{.33\linewidth}
\begin{overpic}[width=1\textwidth]
{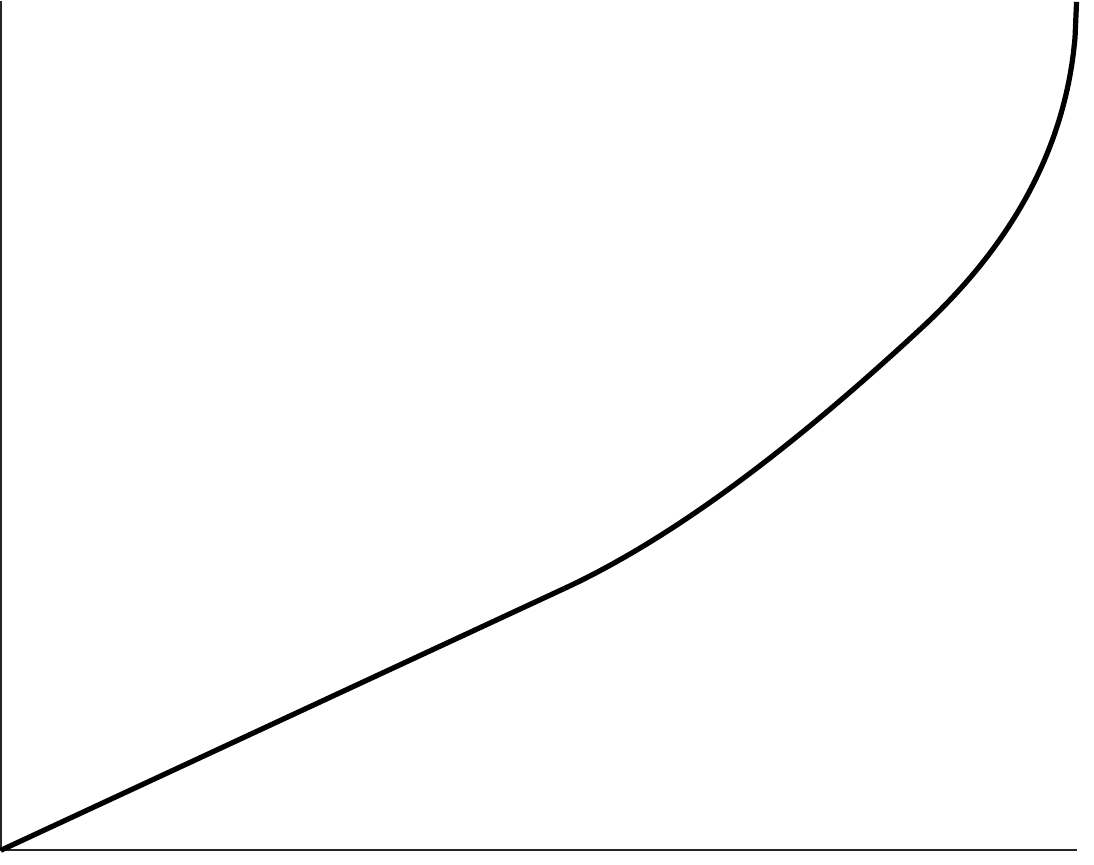}
\put (-2, 85) {$y$}
\put (2, 70) {$256$}
\put (-2, -7) {$0$}
\put (90, -7) {$12$}
\put (105, -2) {$V$}
\end{overpic}
\vspace{0pt}
\caption{Plot of $\mathcal{J}^2(V)$ \label{fig:plot_J^2_X}}
\end{subfigure}
\qquad
\qquad
\begin{subfigure}[b]{.33\linewidth}
\begin{overpic}[width=1\textwidth]
{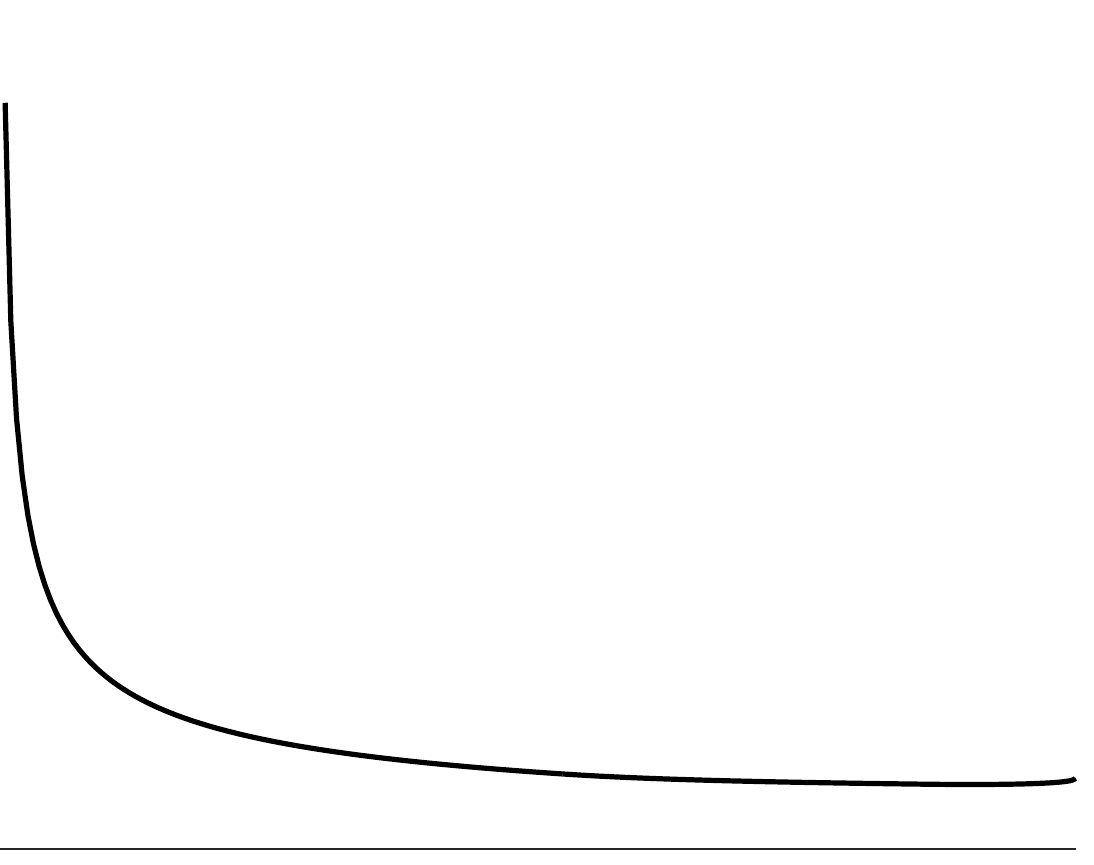}
\put (-2, 85) {$y$}
\put (2, 70) {$16$}
\put (-2, -7) {$0$}
\put (90, -7) {$12$}
\put (105, -2) {$V$}
\end{overpic}
\vspace{0pt}
\caption{Plot of $V^{-1}\mathcal{J}(V)$ \label{fig:plot_h_X}}
\end{subfigure}
}
\caption{Plots of $\mathcal{J}(V)$, $\mathcal{J}^2(V)$ and $V^{-1}\mathcal{J}(V)$ for $\mathcal{X}_4$.\label{fig:plots_X}}
\end{figure}
%



\bibliographystyle{plainurl}

\bibliography{convexity_ip_no_necks}

\begin{thebibliography}{10}

\bibitem{AC09}
F.~Alter and V.~Caselles.
\newblock Uniqueness of the {C}heeger set of a convex body.
\newblock {\em Nonlinear Anal.}, 70:32--44, 2009.
\newblock \href {https://doi.org/10.1016/j.na.2007.11.032}
  {\path{doi:10.1016/j.na.2007.11.032}}.

\bibitem{ACC05}
F.~Alter, V.~Caselles, and A.~Chambolle.
\newblock A characterization of convex calibrable sets in $\mathbb{R}^n$.
\newblock {\em Math. Ann.}, 332(2):329--366, 2005.
\newblock \href {https://doi.org/10.1007/s00208-004-0628-9}
  {\path{doi:10.1007/s00208-004-0628-9}}.

\bibitem{ACMM01}
L.~Ambrosio, V.~Caselles, S.~Masnou, and J.-M. Morel.
\newblock Connected components of sets of finite perimeter and applications to
  image processing.
\newblock {\em J. Eur. Math. Soc. (JEMS)}, 3(1):39--92, 2001.
\newblock \href {https://doi.org/10.1007/PL00011302}
  {\path{doi:10.1007/PL00011302}}.

\bibitem{BGM03}
E.~Barozzi, E.~Gonzalez, and U.~Massari.
\newblock The mean curvature of a {L}ipschitz continuous manifold.
\newblock {\em Rend. Mat. Acc. Lincei}, 14(4):257--277, 2003.
\newblock URL: \url{http://www.bdim.eu/item?id=RLIN_2003_9_14_4_257_0}.

\bibitem{Bes52}
A.~S. Besicovitch.
\newblock Variants of a classical isoperimetric problem.
\newblock {\em Quart. J. Math. Oxford Ser. (2)}, 3:42--49, 1952.

\bibitem{CCN10}
V.~Caselles, A.~Chambolle, and M.~Novaga.
\newblock Some remarks on uniqueness and regularity of {C}heeger sets.
\newblock {\em Rend. Semin. Mat. Univ. Padova}, 123:191--201, 2010.
\newblock \href {https://doi.org/10.4171/RSMUP/123-9}
  {\path{doi:10.4171/RSMUP/123-9}}.

\bibitem{FLSS19}
D.~De~Ford, H.~Lavenant, Z.~Schutzman, and J.~Solomon.
\newblock Total variation isoperimetric profiles.
\newblock {\em SIAM J. Appl. Algebra Geom.}, 3(4):585--613, 2019.
\newblock \href {https://doi.org/10.1137/18M1215943}
  {\path{doi:10.1137/18M1215943}}.

\bibitem{Dem75}
R.~F. DeMar.
\newblock A simple approach to isoperimetric problems in the plane.
\newblock {\em Math. Mag.}, 48:1--12, 1975.
\newblock \href {https://doi.org/10.2307/2689287} {\path{doi:10.2307/2689287}}.

\bibitem{FedererCM}
H.~Federer.
\newblock Curvature measures.
\newblock {\em Trans. Amer. Math. Soc.}, 93(3):418--491, 1959.
\newblock \href {https://doi.org/10.2307/1993504} {\path{doi:10.2307/1993504}}.

\bibitem{GMT81}
E.~Gonzalez, U.~Massari, and I.~Tamanini.
\newblock Minimal boundaries enclosing a given volume.
\newblock {\em Manuscripta Math.}, 34(2--3):381--395, 1981.
\newblock \href {https://doi.org/10.1007/BF01165546}
  {\path{doi:10.1007/BF01165546}}.

\bibitem{Kaw16}
B.~Kawohl.
\newblock Two dimensions are easier.
\newblock {\em Arch. Math.}, 107(4):423--428, 2016.
\newblock \href {https://doi.org/10.1007/s00013-016-0953-8}
  {\path{doi:10.1007/s00013-016-0953-8}}.

\bibitem{KL06}
B.~Kawohl and T.~Lachand-Robert.
\newblock Characterization of {C}heeger sets for convex subsets of the plane.
\newblock {\em Pacific J. Math.}, 225(1):103--118, 2006.
\newblock \href {https://doi.org/10.2140/pjm.2006.225.103}
  {\path{doi:10.2140/pjm.2006.225.103}}.

\bibitem{Kor83a}
N.~Korevaar.
\newblock Capillary surface convexity above convex domains.
\newblock {\em Indiana Univ. Math. J.}, 32(1):73--82, 1983.
\newblock \href {https://doi.org/10.1512/iumj.1983.32.32007}
  {\path{doi:10.1512/iumj.1983.32.32007}}.

\bibitem{Kor83b}
N.~Korevaar.
\newblock Convex solutions to nonlinear elliptic and parabolic boundary value
  problems.
\newblock {\em Indiana Univ. Math. J.}, 32(4):603--614, 1983.
\newblock \href {https://doi.org/10.1512/iumj.1983.32.32042}
  {\path{doi:10.1512/iumj.1983.32.32042}}.

\bibitem{Kuw03}
E.~Kuwert.
\newblock {\em Geometric Analysis and Nonlinear Partial Differential
  Equations}, chapter Note on the {I}soperimetric {P}rofile of a {C}onvex
  {B}ody, pages 195--200.
\newblock Springer, Berlin, Heidelberg, 2003.
\newblock \href {https://doi.org/10.1007/978-3-642-55627-2_12}
  {\path{doi:10.1007/978-3-642-55627-2_12}}.

\bibitem{LNS17}
G.~P. Leonardi, R.~Neumayer, and G.~Saracco.
\newblock The {C}heeger constant of a {J}ordan domain without necks.
\newblock {\em Calc. Var. Partial Differential Equations}, 56:164, 2017.
\newblock \href {https://doi.org/10.1007/s00526-017-1263-0}
  {\path{doi:10.1007/s00526-017-1263-0}}.

\bibitem{LP16}
G.~P. Leonardi and A.~Pratelli.
\newblock On the {C}heeger sets in strips and non-convex domains.
\newblock {\em Calc. Var. Partial Differential Equations}, 55(1):15, 2016.
\newblock \href {https://doi.org/10.1007/s00526-016-0953-3}
  {\path{doi:10.1007/s00526-016-0953-3}}.

\bibitem{LRV18}
G.~P. Leonardi, M.~Ritor\'e, and E.~Vernadakis.
\newblock Isoperimetric inequalities in unbounded convex bodies.
\newblock To appear in \emph{Mem. Amer. Math. Soc.}

\bibitem{LS18a}
G.~P. Leonardi and G.~Saracco.
\newblock The prescribed mean curvature equation in weakly regular domains.
\newblock {\em NoDEA Nonlinear Differ. Equ. Appl.}, 25(2):9, 2018.
\newblock \href {https://doi.org/10.1007/s00030-018-0500-3}
  {\path{doi:10.1007/s00030-018-0500-3}}.

\bibitem{LS18b}
G.~P. Leonardi and G.~Saracco.
\newblock Two examples of minimal {C}heeger sets in the plane.
\newblock {\em Ann. Mat. Pura Appl. (4)}, 197(5):1511--1531, 2018.
\newblock \href {https://doi.org/10.1007/s10231-018-0735-y}
  {\path{doi:10.1007/s10231-018-0735-y}}.

\bibitem{LS20}
G.~P. Leonardi and G.~Saracco.
\newblock Minimizers of the prescribed curvature functional in a {J}ordan
  domain with no necks.
\newblock {\em ESAIM Control Optim. Calc. Var.}, 26:76, 2020.
\newblock \href {https://doi.org/10.1051/cocv/2020030}
  {\path{doi:10.1051/cocv/2020030}}.

\bibitem{Lin77}
T.~P. Lin.
\newblock Maximum area under constrain.
\newblock {\em Math. Mag.}, 50(1):32--34, 1977.
\newblock \href {https://doi.org/10.2307/2689750} {\path{doi:10.2307/2689750}}.

\bibitem{Mag12book}
F.~Maggi.
\newblock {\em Sets of {F}inite {P}erimeter and {G}eometric {V}ariational
  {P}roblems}, volume 135 of {\em Cambridge Studies in Advanced Mathematics}.
\newblock Cambridge University Press, Cambridge, 2012.
\newblock \href {https://doi.org/10.1017/CBO9781139108133}
  {\path{doi:10.1017/CBO9781139108133}}.

\bibitem{Mil09}
E.~Milman.
\newblock On the role of convexity in isoperimetry, spectral gap and
  concentration.
\newblock {\em Invent. Math.}, 177(1):1--43, 2009.
\newblock \href {https://doi.org/10.1007/s00222-009-0175-9}
  {\path{doi:10.1007/s00222-009-0175-9}}.

\bibitem{PS17}
A.~Pratelli and G.~Saracco.
\newblock On the generalized {C}heeger problem and an application to 2d strips.
\newblock {\em Rev. Mat. Iberoam.}, 33(1):219--237, 2017.
\newblock \href {https://doi.org/10.4171/RMI/934} {\path{doi:10.4171/RMI/934}}.

\bibitem{RZ19book}
J.~Rataj and M.~Z\"{a}hle.
\newblock {\em {C}urvature {M}easures of {S}ingular {S}ets}.
\newblock Springer Monographs in Mathematics. Springer, Cham, 2019.
\newblock \href {https://doi.org/10.1007/978-3-030-18183-3}
  {\path{doi:10.1007/978-3-030-18183-3}}.

\bibitem{RV15}
M.~Ritor\'e and E.~Vernadakis.
\newblock Isoperimetric inequalities in {E}uclidean convex bodies.
\newblock {\em Trans. Amer. Math. Soc.}, 367(7):4983--5014, 2015.
\newblock \href {https://doi.org/10.1090/S0002-9947-2015-06197-2}
  {\path{doi:10.1090/S0002-9947-2015-06197-2}}.

\bibitem{Rock15book}
R.~T. Rockafellar.
\newblock {\em Convex Analysis}.
\newblock Princeton University Press, 2015.
\newblock \href {https://doi.org/doi:10.1515/9781400873173}
  {\path{doi:doi:10.1515/9781400873173}}.

\bibitem{Ros05}
A.~Ros.
\newblock The isoperimetric problem.
\newblock In {\em Global Theory of Minimal Surfaces}, volume~2, pages 175--209.
  Amer. Math. Soc., Providence, RI, 2005.

\bibitem{SS19}
A.~Saracco and G.~Saracco.
\newblock A discrete districting plan.
\newblock {\em Netw. Heterog. Media}, 14(4):771--788, 2019.
\newblock \href {https://doi.org/10.3934/nhm.2019031}
  {\path{doi:10.3934/nhm.2019031}}.

\bibitem{Sar21}
G.~Saracco.
\newblock A sufficient criterion to determine planar self-{C}heeger sets.
\newblock {\em J. Convex Anal.}, 28(3), 2021.
\newblock URL: \url{https://www.heldermann.de/JCA/JCA28/JCA283/jca28055.htm}.

\bibitem{SS78}
D.~Singmaster and D.~J. Souppuris.
\newblock A constrained isoperimetric problem.
\newblock {\em Math. Proc. Cambridge Philos. Soc.}, 83(1):73--82, 1978.
\newblock \href {https://doi.org/10.1017/S030500410005430X}
  {\path{doi:10.1017/S030500410005430X}}.

\bibitem{Ste42a}
J.~Steiner.
\newblock Sur le maximum et le minimum des figures dans le plan, sur la
  sph\`ere et dans l'espace en g\'{e}n\'{e}ral. {P}remier m\'{e}moire.
\newblock {\em J. Reine Angew. Math.}, 24:93--152, 1842.
\newblock \href {https://doi.org/10.1515/crll.1842.24.93}
  {\path{doi:10.1515/crll.1842.24.93}}.

\bibitem{Ste42b}
J.~Steiner.
\newblock Sur le maximum et le minimum des figures dans le plan, sur la
  sph\`ere et dans l'espace en g\'{e}n\'{e}ral. {S}econd m\'{e}moire.
\newblock {\em J. Reine Angew. Math.}, 24:189--250, 1842.
\newblock \href {https://doi.org/10.1515/crll.1842.24.189}
  {\path{doi:10.1515/crll.1842.24.189}}.

\bibitem{SZ99}
P.~Sternberg and K.~Zumbrun.
\newblock On the connectivity of boundaries of sets minimizing perimeter
  subject to a volume constraint.
\newblock {\em Comm. Anal. Geom.}, 7(1):199--220, 1999.
\newblock \href {https://doi.org/10.4310/CAG.1999.v7.n1.a7}
  {\path{doi:10.4310/CAG.1999.v7.n1.a7}}.

\bibitem{SZ97}
E.~Stredulinsky and W.~P. Ziemer.
\newblock Area minimizing sets subject to a volume constraint in a convex set.
\newblock {\em J. Geom. Anal.}, 7(4):653--677, 1997.
\newblock \href {https://doi.org/10.1007/BF02921639}
  {\path{doi:10.1007/BF02921639}}.

\bibitem{ZdFS20}
P.~Zhang., D.~De~Ford, and J.~Solomon.
\newblock Medial axis isoperimetric profiles.
\newblock {\em Comput. Graph Forum}, 39(5):1--13, 2020.
\newblock \href {https://doi.org/10.1111/cgf.14064}
  {\path{doi:10.1111/cgf.14064}}.

\end{thebibliography}

\end{document}